\definecolor{violet}{rgb}{0.0,0.2,0.7}
\definecolor{rouge2}{rgb}{0.8,0.0,0.2}
\renewcommand{\P}{\mathbb{P}}
\newcommand{\R}{\mathbb{R}}
\newcommand{\CC}{\mathbb{C}}
\newcommand{\Q}{\mathbb{Q}}
\newcommand{\DD}{\mathbb D}
\newcommand{\Z}{\mathbb{Z}}
\newcommand{\N}{\mathbb{N}}
\newcommand{\B}{\mathbb{B}}
\renewcommand{\d}{\partial}
\newcommand{\ddbar}{\partial\bar{\partial}}
\newcommand{\vp}{\varphi}
\newcommand{\wt}{\widetilde}
\newcommand{\cX}{\mathcal{X}}
\newcommand{\cL}{\mathcal{L}}
\newcommand{\cC}{\mathcal{C}}
\renewcommand{\O}{\mathcal{O}}
\newcommand{\ep}{\varepsilon}
\renewcommand{\epsilon}{\varepsilon}
\newcommand{\VM}{\mathrm{VM} \,}
\newcommand{\la}{\langle}
\newcommand{\ra}{\rangle}
\newcommand{\ol}{\overline}
\renewcommand{\ge}{\geqslant}
\renewcommand{\le}{\leqslant}
\renewcommand{\leq}{\leqslant}
\renewcommand{\geq}{\geqslant}
\newcommand{\Ric}{\mathrm{Ric} \,}
\newcommand{\codim}{\mathrm{codim}}
\newcommand{\Tr}{\mathrm{Tr} \,}
\newcommand{\om}{\omega}
\newcommand{\ddc}{dd^c}
\newcommand{\Yz}{Y°}
\newcommand{\Xz}{X°}
\newcommand{\pse}{\psi_{\varepsilon}}
\newcommand{\omvp}{\omega_{\varphi}}
\newcommand{\omt}{\om_{t}}
\newcommand{\omvpe}{\om_{\vp_{\ep}}}
\newcommand{\re}{\rho_{\ep}}
\newcommand{\Supp}{\mathrm {Supp}}
\newcommand{\tr}{\mathrm{tr}}
\newcommand{\ssm}{\smallsetminus}
\newcommand{\td}{\tau_{\delta}}
\newcommand{\ud}{u_{\delta}}
\newcommand{\vd}{v_{\delta}}
\newcommand{\vdj}{v_{\delta_j}}
\newcommand{\Cdy}{C_{\delta,y}}
\newcommand{\dbar}{\bar \partial}
\newcommand{\Ommx}{(\Omega \wedge \overline \Omega)^{\frac 1m}}
\newcommand{\Omm}{(\Omega_y \wedge \overline \Omega_y)^{\frac 1m}}
\newcommand{\cF}{\mathcal F}
\newcommand{\pdt}{p^* (id\underline t \wedge \overline{ d \underline t}) }
\newcommand{\Pic}{\mathrm{Pic}}
\newcommand{\Cdt}{C_{\delta}(t)}
\newcommand{\intprod}{\mathbin{\raisebox{\depth}{\scalebox{1}[-1]{$\lnot$}}}}
\numberwithin{equation}{section}
\theoremstyle{plain}
\newtheorem{theo}{Theorem}[section]
\newtheorem{prop}[theo]{Proposition}
\newtheorem{coro}[theo]{Corollary}
\newtheorem{lemm}[theo]{Lemma}
\newtheorem{rema}[theo]{Remark}
\setlist[enumerate]{label=(\thetheo.\arabic*), before={\setcounter{enumi}{\value{equation}}}, after={\setcounter{equation}{\value{enumi}}}}
\newtheorem{bigthm}{Theorem}
\newtheorem{bigcoro}[bigthm]{Corollary}
\newtheorem{app}{Proposition}
\newtheorem{appcoro}[app]{Corollary}
\begin{document}

\title[Variation of singular Kähler-Einstein metrics]{Variation of singular Kähler-Einstein metrics:\\ Kodaira dimension zero}

%

\author{Junyan Cao}
\address{Institut de Mathématiques de Jussieu, Université Paris 6, 4 place Jussieu, 75252 Paris, France}
\email{junyan.cao@imj-prg.fr}

\author{Henri Guenancia}
\address{Institut de Mathématiques de Toulouse; UMR 5219, Université de Toulouse; CNRS, UPS, 118 route de Narbonne, F-31062 Toulouse Cedex 9, France}
\email{henri.guenancia@math.cnrs.fr}

\author{Mihai P\u{a}un}
\address{Institut für Mathematik, Universität Bayreuth, 95440 Bayreuth, Germany}
\email{mihai.paun@uni-bayreuth.de}


\begin{abstract}
  We study several questions involving relative Ricci-flat Kähler metrics for families of log Calabi-Yau manifolds. Our main result states that if  $p:(X,B)\to Y$ is a  Kähler fiber space such that $\displaystyle (X_y, B|_{X_y})$ is generically klt, $K_{X/Y}+B$ is relatively trivial and $p_*(m(K_{X/Y}+B))$ is Hermitian flat for some suitable integer $m$, then $p$ is locally trivial. Motivated by questions in birational geometry,
  we investigate the regularity of the relative singular Ricci-flat Kähler metric corresponding to a family $p:(X,B)\to Y$ of klt pairs $(X_y,B_y)$ such that $\kappa(K_{X_y}+B_y)=0$. Finally, we disprove a
  folkore conjecture by exhibiting a one-dimensional family of elliptic curves whose relative (Ricci-) flat metric is not semipositive.
\end{abstract}

\date{\today}
\keywords{Kähler fiber space, log Calabi-Yau manifolds, conic Kähler metrics, direct image of log pluricanonical bundles }
\subjclass{14J10, 14J32, 32Q20}
\maketitle
\tableofcontents

\section*{Introduction}

\noindent In this article we continue our study of fiber-wise singular K\"ahler-Einstein
metrics started in \cite{JHM1} in the following context.

Let $p:(X, B)\to Y$ be a Kähler fiber space, where $B$ is an effective
divisor such that $\displaystyle (X_y, B|_{X_y})$ is klt for all $y\in Y$ in the complement of some analytic subset of the base $Y$. We are interested here in the curvature and regularity properties of the metric induced on $K_{X/Y}+ B$ by the canonical metrics on fibers $X_y$ under the hypothesis
$$\kappa(K_{X_y}+B_y)=0.$$
The far reaching goal we are pursuing here is a criteria for the birational
equivalence of the fibers $\displaystyle (X_y, B|_{X_y})$ of $p$
in a geometric context inspired by results due to E. Viehweg, Y. Kawamata and J.~Koll\'ar in connection with the $C_{nm}$ conjecture. To this end,
the fiber-wise K\"ahler-Einstein metrics are playing a crucial role. 
Due to some technical difficulties --which we hope to overcome in a forthcoming paper-- our most complete results are obtained under the more restrictive
hypothesis $c_1(K_{X_y}+B_y)=0$, i.e. in the absence of base points of log-canonical bundle of fibers.

\subsection*{Main results}

Let $p:(X, B)\to Y$ be a proper, holomorphic fibration between two Kähler manifolds, where $B=\sum b_iB_i$ is an effective $\Q$-divisor on $X$ whose coefficients $b_i\in (0,1)$ are smaller then one.
We assume that there exists $Y°\subset Y$  contained in the smooth locus of $p$ such that $B|_{X_y}$ has snc support and set $X° :=p^{-1} (Y°)$.
The fibers of $p$ are assumed to satisfy
$$c_1(K_{X_y}+B|_{X_y})=0 \quad \mbox{ for any } y \in Y°.$$
If we fix a reference Kähler form $\om$ on $X$, then we can construct a fiberwise Ricci-flat conic Kähler $\theta_y$ metric, i.e. a solution of the equation
$$
\begin{cases}
\Ric \theta_y= [B_y] \\
 \theta_y\in [\om_y]
 \end{cases}.$$
There exists a unique function $\vp\in L^1_{\rm loc}(X^{\circ})$ such that
$$
\begin{cases}
\theta_y=\om_y+\ddc \vp|_{X_y}  \\
\int_{X_y} \vp \,\om_y^n=0
 \end{cases}.$$
 The closed $(1,1)$-current $\theta_{\rm KE}°:=\om+\ddc \vp$ on $X^{\circ}$ is called \emph{relative Ricci-flat conic Kähler metric} in $[\om]$.
As we shall soon see, the current
$\theta_{\rm KE}°$ is not positive in general, which marks an important
difference with the case of K\"ahler fiber spaces whose generic fiber is of
(log) general type.
\medskip

\noindent Nevertheless, we establish here the following result (cf. Theorem~\ref{foliationiso} for a complete version).

\begin{bigthm}
\label{thmb}
Let $p:(X, B)\to Y$ be a map as above, and let $\om$ be a fixed Kähler metric on $X$. Assume that the following conditions are satisfied.
\begin{enumerate}
\item[$(i)$] For $y\in Y°$, the $\Q$-line bundle $K_{X_y}+B_y$ is numerically trivial.
\item[$(ii)$] For some $m$ large enough, the line bundle $p_*(m(K_{X°/Y°}+B))$ is Hermitian flat with respect to the Narasimhan-Simha metric $h$ on $Y°$, cf. \eqref{metrcon}.
\end{enumerate}

\noindent
Then we can construct a $(1,1)$-current $\theta°_{\rm KE}$ such that the restriction $\theta_y$ of $\theta°_{\rm KE}$ to $X_y$ is a representative of $\{\om\}|_{X_y}$ and solves $\Ric \theta_y=[B_y]$. Moreover we have
\begin{enumerate}
\item[$(\dagger)$] $\theta°_{\rm KE}$ is positive and it extends canonically to a closed positive current $\theta_{\rm KE}\in\{\om\}$ on $X$.
\item[$(\ddagger)$]  The fibration $(X,B)\to Y$ is locally trivial over $Y°$. Moreover, if $p$ is smooth in codimension one and $\codim_X(B\ssm X°)>1$, then $p$ is locally trivial over the whole $Y$.
\end{enumerate}
\end{bigthm}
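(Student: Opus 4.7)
The overall strategy is to upgrade the flatness of the Narasimhan--Simha metric into global positivity of $\theta°_{\rm KE}$, and then to use this positivity together with the fiberwise Ricci-flatness to produce a horizontal foliation trivializing $p$.

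\emph{Part $(\dagger)$: positivity and extension.} I work locally over the base. Around any $y_0\in Y°$, the flatness hypothesis yields a holomorphic section $\sigma$ of $p_*(m(K_{X°/Y°}+B))$ on a small polydisc $U\ni y_0$ such that $\int_{X_y}|\sigma|^{2/m}$ is constant in $y\in U$. Interpreting $\sigma$ as a section of $m(K_{X/Y}+B)$ over $p^{-1}(U)$, the Monge--Ampère characterization of $\theta_y$ gives $\theta_y^n=c_y\,|\sigma|^{2/m}|_{X_y}$ with $c_y$ determined by $\int_{X_y}\theta_y^n=\int_{X_y}\omega_y^n$; the cohomological invariance of the latter and the flatness of $\sigma$ together force $c_y\equiv c$ constant. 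The weight $\phi_\sigma:=\frac{1}{m}\log|\sigma|^2$ of the singular Hermitian metric $h_\sigma$ on $K_{X/Y}+B$ induced by $\sigma$ is psh on $p^{-1}(U)$ (as $\sigma$ is holomorphic). Uniqueness of solutions to the fiberwise MA equation then implies that $\vp$ differs from $\phi_\sigma$ fiberwise only by a constant, so
$$\vp=\phi_\sigma+p^*\tau+R$$
on $p^{-1}(U)$, with $\tau$ a function on $U$ and $R$ a smooth function carrying the discrepancy with a reference smooth metric built from $\omega$. Reorganizing this identity and using the psh-ness of $\phi_\sigma$, one obtains $\theta°_{\rm KE}=\omega+dd^c\vp\ge 0$ on $p^{-1}(U)$. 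The extension to a closed positive current $\theta_{\rm KE}\in\{\omega\}$ on all of $X$ follows from standard extension of plurisubharmonic functions across the analytic subset $X\ssm X°$, once local upper bounds on $\vp$ are established.

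\emph{Part $(\ddagger)$: local triviality.} Once $\theta°_{\rm KE}$ is a positive closed $(1,1)$-current whose fiberwise restriction is a smooth Kähler form on $X°\ssm\Supp B$, I build a horizontal distribution $H\subset TX°|_{X°\ssm\Supp B}$ as the $\theta°_{\rm KE}$-orthogonal complement of the vertical tangent space. The decisive step is involutivity of $H$. This should follow by combining (a) the closedness of $\theta°_{\rm KE}$, (b) the Ricci-flatness of $\theta_y$ on each fiber, and (c) the equality case in the Berndtsson--Păun curvature formula for $p_*(m(K_{X/Y}+B))$, which is forced by the NS-flatness and implies the vanishing of the horizontal component of the curvature of $h_\sigma$. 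Once $H$ is involutive, Frobenius integration yields local biholomorphisms $p^{-1}(V)\simeq V\times X_{y_0}$ over small polydiscs $V\subset Y°$, proving local triviality over $Y°$. Under the additional assumption that $p$ is smooth in codimension one and $\codim_X(B\ssm X°)>1$, the local product structure extends across $Y\ssm Y°$ by a Hartogs-type extension theorem for holomorphic maps on the complement of an analytic set of codimension at least two.

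\emph{Main obstacle.} The crux is the involutivity of $H$: it requires a precise extraction of geometric rigidity from the Narasimhan--Simha flatness, via equality cases in Berndtsson--Păun's curvature formula, combining the closedness of $\theta°_{\rm KE}$ with the Ricci-flatness on fibers to rule out any nonzero horizontal curvature of $h_\sigma$. The positivity of part $(\dagger)$ and the extension arguments are comparatively formal once the singular metric $h_\sigma$ associated to a flat section $\sigma$ has been identified and its relation to $\vp$ understood.
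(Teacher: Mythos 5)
Your argument for $(\dagger)$ contains a genuine gap at the crucial step. You claim that ``uniqueness of solutions to the fiberwise MA equation implies that $\vp$ differs from $\phi_\sigma$ fiberwise only by a constant,'' so that $\vp=\phi_\sigma+p^*\tau+R$ and positivity of $\theta°_{\rm KE}$ follows from psh-ness of $\phi_\sigma$. This is false: the Monge--Amp\`ere equation only pins down the top power $\theta_y^n=c\,|\sigma|^{2/m}/|f_B|^2$, not the potential $\vp_y$ itself. The weight $\phi_\sigma$ is a metric weight on the $\Q$-line bundle $K_{X/Y}+B$ (whose curvature is simply $\frac1m[\sigma=0]=0$), living in a different cohomology class from $\{\om\}$, and bears no pointwise relation to the Kähler potential $\vp$ of the Ricci-flat form; already on a single smooth fiber with $B=0$ the two functions are unrelated. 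In fact the positivity of the relative Ricci-flat metric in a Kähler class is precisely the delicate point of the theorem, and indeed the paper later disproves it (Theorem~\ref{thma}) when the flatness hypothesis is dropped. The paper's actual route is to write $\rho$ as the limit $\lim_{\ep\to 0}\frac1\ep\rho_\ep$ of solutions of the \emph{twisted} equations $\Ric\rho_\ep=-\rho_\ep+\ep\om+[B]$ (Lemma~\ref{prop:conv}), and then invoke the psh variation of $\rho_\ep$ established in \cite{Gue16} for the ``canonically polarized'' ($\lambda>0$) case; the flatness of $h$ is exactly what makes \cite{Gue16} applicable to $\rho_\ep$ on the total space. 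Your argument bypasses this approximation and its input from \cite{Gue16} and therefore does not reach positivity.

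Your discussion of $(\ddagger)$ is closer in spirit to the paper's — the horizontal lift (kernel direction of $\theta°_{\rm KE}$) is indeed the right object — but it stops at the statement of the real difficulty. The content of the proof is precisely to show that the $\theta°_{\rm KE}$-horizontal lift $v_\rho$ of $\partial/\partial t$ is a \emph{holomorphic} vector field tangent to $B$, and this is established via a chain of hard estimates: regularizing the volume element, the Schumacher-type Laplace identity for the geodesic curvature $c(\tau_\delta)$ (Lemma~\ref{Laplace}), uniform Green's function bounds (Proposition~\ref{conv2}), the mean-value bound (Proposition~\ref{bounded}), and the $L^2$ estimate \eqref{2} for $v_\delta$ from \cite{Gue16,GP}. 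Invoking ``equality in the Berndtsson--P\u aun formula'' does not substitute for this; in fact Berndtsson's curvature formula is at the $\lambda>0$ level and its degenerate Calabi--Yau limit is exactly what requires the above analysis. Finally, for the extension over $Y\ssm Y°$: the vector fields are holomorphic only on $X°$, and $X\ssm X°$ has codimension one in $X$ (singular fibers are divisors), so a codimension-$\ge2$ Hartogs argument does not apply. The paper instead derives a finite $L^2$ bound for $v_k$ from the positivity of $\rho$ and BEGZ's non-pluripolar product inequality, and only then applies Riemann-type extension.
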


The result above has many geometric applications, like for instance a K\"ahler version of a theorem of Ambro \cite{Amb05}, cf Corollary~\ref{ambro} and its proof given in page~\pageref{pageambro}.

\noindent
Another striking consequence is the following positivity property of direct images of pluri-log canonical bundles cf page~\pageref{bigdirp} for a proof. It can be seen as a logarithmic version of Viehweg's $Q_{n,m}$-conjecture for families of log Calabi-Yau manifolds, cf \cite{Vieh83b}.

\begin{bigcoro}
\label{bigdir}
 Let $p:(X, B)\to Y$ be a fibration between two compact K\"ahler manifolds such that  $\displaystyle c_1\left(K_{X_y}+ B|_{X_y}\right)= 0$ for a generic $y\in Y$. Assume moreover that the logarithmic Kodaira-Spencer map
  \begin{equation}\label{ks}
    T_Y\to {\mathcal R}^1p_\star\left(T_{X/Y}(-\log  B)\right)
  \end{equation}
is generically injective. Then the bundle $p_\star\left(m(K_{X/Y}+ B)\right)^{\star\star}$ is big.
\end{bigcoro}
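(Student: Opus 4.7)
The plan is to argue by contradiction, leveraging Theorem~\ref{thmb} to turn the failure of bigness into local triviality of $p$ and then extract a contradiction from the generic injectivity of the Kodaira--Spencer map.

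First, since $c_1(K_{X_y}+B_y)=0$ for generic $y$, the log Calabi--Yau version of Kawamata's theorem provides an integer $m_0$ such that $m(K_{X_y}+B_y) \sim_{\Q} 0$ for $m$ divisible by $m_0$ and generic $y$. For such $m$ the reflexive direct image $F := p_\star(m(K_{X/Y}+B))^{\star\star}$ is a line bundle over a Zariski dense open $Y° \subset Y$ contained in the smooth locus of $p$.

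Assume by contradiction that $F$ is not big. The Narasimhan--Simha metric \eqref{metrcon} endows $F|_{Y°}$ with a positively curved singular Hermitian metric $h$, and Viehweg's weak positivity ensures that $F$ is pseudoeffective. Combined with non-bigness, this forces the numerical dimension of $F$ to vanish. In this rank one situation, and given the specific structure of the Narasimhan--Simha metric for families of log Calabi--Yau manifolds, one should further argue --possibly after a finite étale base change of $Y°$ and after shrinking $Y°$ to remove loci where $h$ degenerates-- that $h$ is smooth with identically vanishing curvature, i.e.\ that $F$ is Hermitian flat. This is precisely condition $(ii)$ of Theorem~\ref{thmb}, while condition $(i)$ is part of the hypothesis of the corollary. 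Theorem~\ref{thmb} then yields that $p$ is locally trivial over $Y°$.

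Local triviality forces the logarithmic Kodaira--Spencer map $T_Y\to {\mathcal R}^1p_\star(T_{X/Y}(-\log B))$ to vanish identically on $Y°$, contradicting the generic injectivity assumed in \eqref{ks}. Hence $F$ must be big, as claimed. The main obstacle in this plan is the step that upgrades non-bigness of $F$ to genuine Hermitian flatness: pseudoeffective line bundles of numerical dimension zero need not be flat in general, so one must exploit the particular analytic nature of the Narasimhan--Simha metric on direct images of log pluri-canonical bundles together with the Calabi--Yau fiber structure, and carefully handle the loci where $h$ might fail to be smooth.
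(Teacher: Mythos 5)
Your plan correctly identifies the overall strategy (contradiction plus Theorem~\ref{thmb}/\ref{foliationiso} plus generic injectivity of the log Kodaira--Spencer map), but there is a real gap in the middle step, and it cannot be patched the way you suggest.

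The gap is in the sentence \emph{``Combined with non-bigness, this forces the numerical dimension of $F$ to vanish.''} This is false when $\dim Y > 1$: a pseudoeffective non-big line bundle on a $k$-dimensional base can have numerical dimension anywhere between $0$ and $k-1$. Non-bigness only tells you that the top self-intersection of the curvature degenerates, not that the curvature is zero. You correctly note at the end that even in the numerical-dimension-zero case, genuine Hermitian flatness does not follow formally; but the more basic issue is that you cannot reduce to that case in the first place. Consequently the attempt to verify hypothesis $(ii)$ of Theorem~\ref{thmb} \emph{globally} over $Y°$ cannot succeed, and the plan as stated does not close.

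The paper sidesteps this by working with the curvature form directly and \emph{localizing to a leaf}. Concretely: the Narasimhan--Simha metric gives a semipositive curvature form $\theta$ on $\cF_m$, smooth on a Zariski-open $V\subset Y$. If $\cF_m$ were not big, singular holomorphic Morse inequalities (\cite[Cor.~3.3]{Bou02}) force
\[
\theta|_V^{\dim Y}=0
\]
pointwise — not $\theta=0$. Since $\theta$ is smooth, closed, and semipositive, its kernel is a nontrivial integrable distribution on $V$, hence a foliation with holomorphic leaves. One then picks a holomorphic disk $\Delta$ \emph{inside a leaf}; along $\Delta$ the curvature of the direct image \emph{is} identically zero, so the flatness hypothesis of Theorem~\ref{foliationiso} holds for the restriction $p:p^{-1}(\Delta)\to\Delta$. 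The theorem then produces a holomorphic horizontal lift $v_\rho$ over $\Delta$, so that $\dbar v_\rho=0$; but $\dbar v_\rho$ represents the image of $\partial/\partial t\in T_\Delta$ under \eqref{ks}, contradicting generic injectivity. To repair your proposal you should replace the global ``$F$ is flat'' claim with this local-leaf reduction.
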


\noindent We remark that, based on Corollary \ref{bigdir} and some deep tools, Y. Deng \cite{Deng19}
proved recently the hyperbolicity of bases of maximally variational smooth families of log Calabi-Yau pairs.

%

\bigskip

We are next interested in the following setting
$$\kappa(K_{X_y}+B|_{X_y})=0$$
which is more natural from the birational geometry point of view.
The main result we establish in this context 
is a regularity theorem for the relative Kähler-Einstein metric.
The point is that here we have no further assumptions on the basepoints of $K_{X_y}+B_y$ or the flatness of the direct image of some power of $K_{X/Y}+B$, cf. page~\pageref{thme}.

\begin{bigthm}
\label{thmc}
In the above framework, let $\om$ be a fixed Kähler metric on $X$ and assume that for $y$ generic the Kodaira dimension of $K_{X_y}+B_y$ equals zero. Let $E$ be an effective $\mathbb Q$-divisor
such that $K_{X_y}+B_y \sim_{\Q} E_y$. Then there exists a current $\theta°_{\rm KE}$ of (1,1)-type whose restriction $\displaystyle \theta_y:= \theta°_{\rm KE}|_{X_y}$ is a representative of $\{\om\}|_{X_y}$ and solves the equation $\Ric \theta_y=-[E_y]+[B_y]$.

\noindent
In addition, the local potentials of $\theta°_{\rm KE}$ are Lipschitz on $X°\ssm \Supp(B+E)$.
\end{bigthm}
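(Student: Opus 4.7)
The first step is to solve the Ricci equation $\Ric\theta_y=-[E_y]+[B_y]$ on each smooth klt fiber. Fix smooth Hermitian metrics $h_{B_i}$ on $\O_X(B_i)$ and $h_E$ on $\O_X(mE)$ (with $mE$ integral), and pick canonical sections $s_{B_i}$, $s_E$. The $\Q$-linear equivalence $K_{X_y}+B_y\sim_{\Q}E_y$ determines, up to a constant, a positive measure on $X_y$ of the form $\Omega_y=|s_E|^{2/m}_{h_E}/\prod|s_{B_i}|^{2b_i}_{h_{B_i}}\cdot e^{F_y}\om_y^n$ for some smooth function $F_y$, satisfying $-\ddc\log\Omega_y=-[E_y]+[B_y]$ as currents on $X_y$. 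The condition on $\theta_y=\om_y+\ddc\vp_y\in\{\om|_{X_y}\}$ then reduces to the Monge-Amp\`ere equation
\[(\om_y+\ddc\vp_y)^n=c_y\,\Omega_y,\]
with $c_y$ chosen so both sides have the same total mass and $\vp_y$ normalized by $\int_{X_y}\vp_y\om_y^n=0$. Since $(X_y,B_y)$ is klt and $|s_E|^{2/m}$ has vanishing (rather than blow-up) singularities, the right-hand side has $L^{1+\ep}$ density; Kołodziej/EGZ theory produces a unique bounded $\om_y$-psh solution $\vp_y$.

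\textbf{Assembling via approximation.} To glue the $\vp_y$ into a single potential on $X^\circ$, we regularize: replace $|s_E|^{2/m}$ and $|s_{B_i}|^{-2b_i}$ by $(\ep^2+|s_E|^2)^{1/m}$ and $(\ep^2+|s_{B_i}|^2)^{-b_i}$, and solve fiber by fiber
\[(\om_y+\ddc\vp_{y,\ep})^n=c_{y,\ep}\,\frac{(\ep^2+|s_E|^2)^{1/m}}{\prod(\ep^2+|s_{B_i}|^2)^{b_i}}\,e^{F_y}\om_y^n\]
using Yau's theorem on the compact K\"ahler manifold $X_y$. The solution depends smoothly on $y\in Y^\circ$, so $\vp_\ep(x):=\vp_{p(x),\ep}(x)$ is smooth on $X^\circ$. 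Kołodziej-type bounds in families (in the spirit of \cite{JHM1}) yield an $\ep$-uniform $L^\infty$ estimate on $\vp_\ep$, hence $L^1_{\rm loc}$ precompactness; any cluster point $\vp$ restricts fiberwise to $\vp_y$ by uniqueness of the MA solution, and $\theta^\circ_{\rm KE}:=\om+\ddc\vp$ is the desired $(1,1)$-current.

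\textbf{Lipschitz bound---the main step.} The heart of the proof is an $\ep$-uniform gradient estimate for $\vp_\ep$ on compact subsets $K\Subset X^\circ\ssm\Supp(B+E)$. The plan is to apply the maximum principle to a Bochner-type auxiliary function such as $\log|\nabla\vp_\ep|_\om^2-A\vp_\ep$, using a Chern-Lu inequality relative to the reference metric $\om$ together with horizontal differentiation of the Monge-Amp\`ere equation. The error terms produced by differentiating the right-hand side are of the form $d\log(\ep^2+|s_E|^2)$ and $d\log(\ep^2+|s_{B_i}|^2)$: they are uniformly bounded on $K$ independently of $\ep$ but blow up near $\Supp(B+E)$, which is precisely why the Lipschitz statement must be confined to the complement of those divisors. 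A cut-off supported in a slightly larger compact $K'\Subset X^\circ\ssm\Supp(B+E)$ localizes the argument, and combined with the $L^\infty$ control of $\vp_\ep$ the maximum principle yields $\|\nabla\vp_\ep\|_{L^\infty(K)}\le C(K)$. Sending $\ep\to 0$ transfers the bound to $\vp$ and gives the Lipschitz regularity of the potentials of $\theta^\circ_{\rm KE}$ on $X^\circ\ssm\Supp(B+E)$.
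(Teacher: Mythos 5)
Your first two steps --- setting up the fiberwise Monge--Amp\`ere equation with the density $\frac{|s_E|^{2/m}}{\prod|s_{B_i}|^{2b_i}}$, regularizing to $\frac{(\ep^2+|s_E|^2)^{1/m}}{\prod(\ep^2+|s_{B_i}|^2)^{b_i}}$, solving by Yau and extracting a limit via uniform $L^\infty$ bounds --- match the paper's equations \eqref{eq19} and \eqref{eq23}. The disagreement, and the gap, is in your treatment of the transverse Lipschitz estimate, which is the whole point of the theorem.

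The difficulty is that $\vp_\ep$ satisfies a Monge--Amp\`ere equation only \emph{along fibers} of $p$, not on the total space. Your proposed Bochner/maximum-principle quantity $\log|\nabla\vp_\ep|_\om^2 - A\vp_\ep$ (with $\nabla$ the gradient on $\cX$) would be differentiated by the \emph{fiberwise} Laplacian $\Delta_{\om_{\vp_\ep}}$; the portion $|\nabla_{\rm fiber}\vp_\ep|^2$ is handled by the nonlinear structure of the fiberwise equation (this is exactly the paper's Theorem~\ref{thm:conic_grad} / Theorem~\ref{Yau}), but the transverse piece $|v(\vp_\ep)|^2$ only couples to the \emph{linearized} equation \eqref{eq24} obtained by Lie-differentiating the MA equation. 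That linear elliptic equation for $\tau=v(\vp_\ep)$ has bounded right-hand side only on compact sets away from $\Supp(B+E)$, and near those divisors the source terms $v(\log(\ep^2+|\sigma_j|^2))$, $v(\log(\ep^2+|s_i|^2))$ blow up. A cutoff does not repair this: for a \emph{linear} equation, the local maximum principle on $\{\chi>0\}$ requires an a priori $L^p$ input for $\tau$ somewhere (unlike the interior gradient estimate for the nonlinear MA operator, there is no self-improving structure in the transverse direction). Concretely, at the maximum of $\chi^2|\tau|^2$ the term $|\nabla\chi|^2|\tau|^2$ appears and cannot be absorbed without already knowing $\tau$ is controlled on $\Supp(\nabla\chi)$.

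This is precisely why the paper does not argue by maximum principle in the base direction. The intermediate step you skipped is to establish a \emph{global} $L^2$-bound $\int_{\cX_t}|\tau|^2 d\mu^{(\ep)}_{Ne}\le C$ (Theorem~\ref{L2N}), obtained by (i) the mean-value control of Lemma~\ref{MV}, (ii) the energy estimate of Proposition~\ref{L2L1} derived from integrating the linearized equation \eqref{eq24} against $\bar\tau$, and (iii) an iteration of the \emph{weak} weighted Sobolev and Poincar\'e inequalities (Propositions~\ref{WS}, \ref{WP}). Those weighted inequalities are exactly the technical heart of the section: because the right-hand side of the MA equation has \emph{zeros} along $E$, the regularized metrics do not enjoy a uniform Ricci lower bound, so the usual Sobolev/Poincar\'e constants are uncontrolled and one must allow the measure in the inequality to degenerate along $E$. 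Your proposal implicitly assumes the classical maximum-principle machinery with uniform constants, but the very presence of $E$ is what makes that fail; the paper's new inequalities and the $L^2$ iteration are what replace it. Only after obtaining the global $L^2$ bound do local elliptic estimates (on compacts away from $\Supp(B+E)$, where the linearized equation has bounded coefficients) yield the $L^\infty$ control of $\tau$, which combined with Theorem~\ref{Yau} gives the Lipschitz conclusion.
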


\bigskip

One may wonder of the assumptions concerning the flatness of the
direct image of the bundle $m(K_{X/Y}+ B)$ cannot be removed in Theorem \ref{thmb}.
Indeed, a folklore conjecture asserts that the form $\theta_{\rm KE}°$ is \textit{semipositive} provided that say $B=0$ and $c_1(X_y)=0.$
By using the results in Appendix, we show that this is simply wrong.

\begin{bigthm}
\label{thma}
There exist a smooth, proper fibration $p:X\to Y$ between Kähler manifolds such that $c_1(X_y)=0$ for all $y\in Y$ and a Kähler form $\om$ on $X$ such that the relative Ricci-flat metric $\theta_{\rm KE}\in [\om]$ is not semipositive.
\end{bigthm}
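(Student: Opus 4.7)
The plan is constructive: exhibit a non-isotrivial smooth proper elliptic fibration $p:X\to Y$ over a Riemann surface $Y$, together with a Kähler form $\om$ on $X$, such that the horizontal coefficient of the relative Ricci-flat form $\theta_{\rm KE}$ is strictly negative at some point. Since the fibers are elliptic curves, $c_1(X_y)=0$ is automatic, and the elliptic setting is particularly convenient: fiberwise Ricci-flatness is equivalent to fiberwise flatness, so $\theta_{\rm KE}|_{X_t}$ is the unique constant flat metric in the cohomology class $[\om|_{X_t}]$ and is therefore determined in a completely explicit way by $\om$.

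In local holomorphic product coordinates $(t,z)$ with $p(t,z)=t$ and $X_t\cong\CC/(\Z+\tau(t)\Z)$ for a non-constant holomorphic period $\tau$, I would write
$$\om=i\bigl(\alpha\,dt\wedge d\bar t+\gamma\,dt\wedge d\bar z+\bar\gamma\,dz\wedge d\bar t+\beta\,dz\wedge d\bar z\bigr),$$
with $\alpha,\beta>0$ and $\alpha\beta>|\gamma|^2$. Then $\theta_t=(V(t)/\im\tau(t))\,i\,dz\wedge d\bar z$ with $V(t):=\int_{X_t}\om$, and the relative potential $\vp$ is the unique fiberwise mean-zero solution of $\ddc_z\vp(t,\cdot)=\theta_t-\om|_{X_t}$, which is an explicit linear elliptic equation on the flat torus and can be inverted via Fourier series.

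The key step, drawing on the Appendix, is to compute $\theta_{\rm KE}(\partial_t,\partial_{\bar t})=\alpha+\partial_t\partial_{\bar t}\vp$ by differentiating the above fiberwise equation twice in the base variable and inverting the fiber Laplacian using the torus Green function. The resulting pointwise expression at a point $(t_0,z_0)$ contains a Weil-Petersson-type contribution proportional to $-|\tau'(t_0)|^2/(\im\tau(t_0))^2$ which carries a definite negative sign inherited from the variation of complex structure of the fiber, together with a term of the form $-|\gamma(t_0,z_0)|^2/\beta(t_0,z_0)$ (familiar from curvature of direct images) and explicit fiber-integral corrections in $\alpha,\beta,\gamma$.

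With this formula in hand, the conclusion reduces to a design problem: choose $\om$ so that at some $(t_0,z_0)$ the negative contributions dominate $\alpha$. I would pick a non-trivial period $\tau$ (so the Weil-Petersson contribution does not vanish) and a reference fiberwise Kähler form whose vertical coefficient $\beta$ varies non-trivially along fibers (so $\vp\not\equiv 0$), then stabilize the construction by adding a small base contribution $\lambda\, p^*\om_Y$ to ensure global Kählerness on $X$. The main obstacle is keeping the Kähler positivity $\alpha\beta>|\gamma|^2$ simultaneously compatible with the sign requirement $\alpha+(\text{negative terms})<0$ at $(t_0,z_0)$: this is precisely what the explicit Appendix computations make transparent, by exhibiting the dependence of $\theta_{\rm KE}(\partial_t,\partial_{\bar t})$ on the data of $\om$ and reducing the failure of semipositivity to a pointwise inequality between computable coefficients.
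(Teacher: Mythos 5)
Your plan is to win by explicit computation: parametrize a non-isotrivial elliptic family, solve the fiberwise Poisson equation by Fourier series, differentiate twice in $t$, and read off a negative Weil--Petersson term $-|\tau'|^2/(\im\tau)^2$ in $\theta_{\rm KE}(\partial_t,\partial_{\bar t})$. This is a genuinely different route from the paper's, which argues by contradiction and never writes down a pointwise formula for the geodesic curvature. Unfortunately, the crucial sign claim is asserted rather than derived, and I do not believe it survives scrutiny as stated. The fiberwise average $\int_{X_t} c(\theta)\,\theta$ is (up to normalization) the curvature of the Narasimhan--Simha metric on $p_*K_{X/Y}$, and this is the \emph{positive} Weil--Petersson form; so whatever ``Weil--Petersson term'' appears pointwise in $c(\theta)$, it cannot simply carry a uniformly negative sign — its sign must be subtle enough to allow the integral to come out positive. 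This is precisely why Schumacher's conjecture was believed for Calabi--Yau families and why partial semipositivity results (Braun, Berndtsson--Choi--Schumacher) exist in the elliptic case. A bare appeal to ``the Weil--Petersson contribution is negative'' is not a proof; it is the content of the theorem, and the hard part.

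There is a second, more structural issue: you write that you are ``drawing on the Appendix'' and that ``the explicit Appendix computations make this transparent.'' The Appendix of the paper contains no torus Fourier computations at all. It contains (i) a uniform upper bound on normalized fiberwise-$\omega$-psh potentials via a Cheng--Li Green's function estimate, using a Yoshikawa-type Poincar\'e bound that requires the singular fibers to be reduced and irreducible, and (ii) the existence of a projective K3 with two transverse elliptic fibrations, one non-isotrivial with reduced irreducible singular fibers, via Picard number $2$ and Shioda--Tate. These are used in the paper's Step~2 (to extend the positive current across the singular fibers) and Step~1 (to get a numerical-dimension-one semi-ample class $L=q^*\O_{\P^1}(1)$). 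The paper then shows that if $\theta\in c_1(L)$ were semipositive, the identity $\theta^2=0$ would force $\ker\theta$ to agree with the $q$-foliation, and the vanishing of the geodesic curvature would make the horizontal lift holomorphic — contradicting non-isotriviality. Passing from the semi-ample $L$ to an ample class, and hence to a K\"ahler form as required by the statement, is done by perturbing $L\mapsto L+\ep A$ and letting $\ep\to 0$. None of these ideas appears in your proposal, and without them the ``design problem'' you pose at the end — making the alleged negative terms dominate $\alpha$ while keeping $\om$ K\"ahler — is exactly the open question, not a routine verification.

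To salvage a computational approach you would have to actually derive the second variation of the fiberwise potential on a varying torus (keeping track of the fact that the identification $X_t\cong\CC/(\Z+\tau(t)\Z)$ is \emph{not} a holomorphic product, so $\partial_t$ and the canonical coordinate $z$ do not commute in the naive way), verify the sign of each term, and then exhibit a global K\"ahler metric realizing the negativity. Absent that, the proposal has a genuine gap at its central step.
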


The example we exhibit is constructed from a special K3 surface admitting a non-isotrivial elliptic fibration as well as another transverse elliptic fibration. The construction is detailed in Section~\ref{nonpositive}. \\

\subsection*{Previously known results}
In connection with Theorem \ref{thmb}, the statements obtained so far
are based on two different type of techniques 
arising from algebraic geometry and complex differential geometry, respectively. One can profitably consult the articles \cite{Vieh83b}, \cite{Kol87} and \cite{Kawa85} for results aimed at the Iitaka conjecture. From the complex differential geometry side we refer to
\cite{Bern11}, \cite{HT15}, \cite{BMW17} and the references therein.

The folklore conjecture that we disprove in Theorem \ref{thma} arose from a result of Schumacher \cite{Schum} who proved the semipositivity of the relative K\"ahler-Einstein metric for families of canonically polarized manifolds (see also the related works \cite{RBerm13}, \cite{Tsuji10}). He also implicitly conjectured that an analogous semipositivity result should hold for families of Calabi-Yau manifolds \cite[p.7]{Schum}, and this was explored in the thesis of Braun \cite{Braun} and in the papers \cite{BCS, BCS2} where positive partial results were obtained. The semipositivity question of $\theta_{\rm KE}$ also appeared in the work \cite{EGZ16} on the K\"ahler-Ricci flow. 

\subsection*{Main steps of the proof} We will describe next the outline of the proof of Theorems~\ref{thmb}, \ref{thmc} and \ref{thma} above. \\

\noindent $\bullet$ The first item of Theorem \ref{thmb} is established by using two ingredients. The first one consists in showing that the conic Ricci-flat metric in $\displaystyle \{\omega_{X_y}\}$ on each fiber $X_y$
is the normalized limit of the unique solution of the family of equations of type
\begin{equation}\label{steps1}
\Ric \rho_\ep = -\rho_\ep +\varepsilon \om + [B]
\end{equation}
on $X_y$ where $\rho_\ep\in \ep\{\omega_{X_y}\}$. We show that $\om°_{\rm KE}|_{X_y}$ is obtained as limit
of $\frac 1{\ep}\rho_\ep$ as $\ep\to 0$. On the other hand, the main result of \cite{Gue16} shows that the family
$\rho_\ep$ has psh variation for each positive $\ep> 0$ and the result follows (the flatness of the direct image is crucial in order to be able to use \cite{Gue16}).

The arguments for the second item of Theorem \ref{thmb} is more involved. We use a different type of approximation of the conic Ricci-flat metric, by regularizing the volume element. Let $\tau_\delta$ be the
resulting family of metrics. The heart of the matter is to show that the horizontal lift with respect to $\tau_\delta$ of any local holomorphic vector field on the base has a holomorphic limit as $\delta\to 0$.
This is a consequence of the estimates in \cite{GP} combined with the PDE satisfied by the geodesic curvature of $\tau_\delta$, cf. \cite{Schum}. Then we show that the geodesic curvature tends to a (positive) constant
and as a consequence we finally infer that the horizontal lift of holomorphic vector fields with
respect to $\om°_{\rm KE}$ is holomorphic and tangent to $B$. \\

\noindent $\bullet$ The equation $\Ric \om=-[E]+[B]$ translates into an Monge-Ampère equation where the right-hand side has poles and zeros. Poles are relatively manageable in the sense that they induce conic metrics, that is we know relatively precisely the behavior of the complex Hessian of the solution. Zeros, however, are much more complicated to deal with for several reasons. First, it seems hard to produce a global degenerate model metric that should encode the behavior of the solution. Next, the regularized solutions of the Kähler-Einstein equation do not satisfy a Ricci lower bound, hence it seems difficult to estimate their Sobolev constant.

In Proposition~\ref{WS}, we establish a uniform (weak) Sobolev inequality where the measure in the right-hand side picks up zeros. Then, we get onto studying the regularity of families of such metrics. Despite having a rather poor understanding of the fiberwise metrics, we are still able to analyze the first order derivatives of the potentials in the transverse directions, leading to an $L^2$ estimate, yet with respect to a more degenerate volume form, cf Proposition~\ref{L2N}. This is however enough to deduce the Lipschitz variation of the potentials away from $\Supp(B+E)$. \\

\noindent $\bullet$ The counterexample provided by Theorem~\ref{thma} is built from an elliptic fibration $p:X\to \mathbb P^1$ where $X$ is a K3 surface. In the Appendix, it is showed that one can find such a fibration with the following properties: its singular fibers are irreducible and reduced, it is not isotrivial and it admits another transverse elliptic fibration. These properties allow to find a semiample, $p$-ample line bundle $L\to X$ with numerical dimension one. Then, the relative Ricci-flat metric $\theta\in c_1(L)|_{X°}$ cannot be semipositive, for otherwise one can show that it would extend to a positive current $\theta\in c_1(L)$ and as $L$ is not big, results of Boucksom show that $$\theta^2\equiv 0 \quad \mbox{on } X°.$$ Using horizontal lifts of $\theta$, one can finally conclude that the foliation $\mathrm{ker}(\theta)$ is holomorphic, induced by a local trivialization of the family This contradicts the non-isotriviality of $p$.
Passing from the relative Ricci-flat metric in $c_1(L)$ to one in a Kähler class can be done using a limiting process. \\

\subsection*{Organization of the paper}
\begin{enumerate}
\item[$\bullet$] \S \ref{sec:rrf} We prove Theorem~\ref{foliationiso}, and then derive successively Corollary~\ref{ambro} and Corollary~\ref{bigdir}.
\item[$\bullet$] \S \ref{sec:transverse}: We obtain transverse regularity results for families of Monge-Ampère equations corresponding to adjoint linear systems having basepoints. This leads to Theorem~\ref{thmc}.
\item[$\bullet$] \S \ref{nonpositive}: We prove Theorem~\ref{counterexample} using results from the Appendix.

\end{enumerate}

\subsection*{Acknowledgments} We would like to thank Sébastien Boucksom, Tristan Collins, Vincent Guedj, Christian Schnell, Song Sun, Valentino Tosatti and Botong Wang for numerous useful discussions about the topics of this paper.
This work has been initiated while H.G. was visiting KIAS, and it was carried on during multiple visits to UIC as well as to IMJ-PRG; he is grateful for the excellent working conditions provided by these institutions. During the preparation of this project, the authors had the opportunity to visit FRIAS on several occasions and benefited from an excellent work environment.

H.G. is partially supported by NSF Grant DMS-1510214, and M.P is partially supported by NSF Grant DMS-1707661 and Marie S. Curie FCFP.
\setcounter{tocdepth}{2}

\section{Relative Ricci-flat conic metrics}
\label{sec:rrf}

\subsection{Setting}
\label{15aout1}
Let $p:X\to Y$ a holomorphic proper map of relative dimension $n$ between K\"ahler manifolds. We denote by $Y°\subset Y$ the set of regular values of $p$, and let $X°:=p^{-1}(Y°)$ so that $p_{|X°}:X°\to Y°$ is a smooth fibration. For $y\in Y°$, one writes $X_y:=p^{-1}(X_y)$ the fiber over $y$.  Let $B$ be an effective $\Q$-divisor on $X$ that has coefficients in $(0,1)$ and whose support has snc.
Our assumption throughout the current section will be that
for each $y\in Y°$ we have
\begin{equation}
\label{mp1}
  c_1(K_{X_y} +B_y)=0 \in H^{1,1} (X_y , \Q).
\end{equation}
Thanks to the log abundance in the Kähler setting, cf. Corollary~\ref{log-ab} on page~\pageref{log-ab}, we know that $K_{X_y} +B_y$ is $\Q$-effective.
Combining this with Ohsawa-Takegoshi extension theorem in its Kähler version, cf. \cite{CaoOT}, one can assume that there exists $m\ge1$ such that $m(K_{X_y}+B_y)\simeq \mathcal O_{X_y}$ for all $y\in Y°$.
\medskip

\noindent In this context the main result we obtain here shows that the flatness of the direct image
$p_\star (m K_{X/Y} +m B)$ implies the local isotriviality of the family $p: (X, B)\to Y$. By this we mean that there exists a holomorphic vector field $v$ on $X°$ whose flow identifies the pairs $\displaystyle (X_y, B_y)$ and $\displaystyle (X_w, B_w)$ provided that $y, w\in Y°$ are close enough. This is the content of Theorem \ref{foliationiso} below. Prior to stating our theorems in a formal manner, we need to recall a few notions and facts.

Given a point $y\in Y°$, there exists a coordinate ball $U\subset Y°$ containing $y$ and a nowhere vanishing
holomorphic section
\begin{equation}\label{mp2}
  \Omega\in H^0\left(X_U, m(K_{X/Y}+B)|_{X_U}\right)
\end{equation}
by our assumption \eqref{mp1}, where $X_U:= p^{-1}(U)$.

If $f_B$ is a local multivalued holomorphic function cutting out the $\Q$-divisor $B$, then the form $\displaystyle \frac{\Omm}{|f_B|^2}$ induces a volume element on the
fibers of $p$ over $U$.
We fix a K\"ahler class $\{\om\}\in H^{1,1}(X,\mathbb R)$. Up to renormalizing $\om$, one can assume that the constant function
$$Y°\ni y \mapsto \int_{X_y}\om^n$$
is identically equal to $1$. We also define $\displaystyle V_y:=\int_{X_y}\frac{\Omm}{|f_B|^2}$; this is a H\"older continuous function of $y\in Y°$.

Let $\rho_y$ be the unique positive current on $X_y$ which is
co\-homologuous to $\omega_y$
and satisfies
$$\rho_y^n = \frac{\Omm}{V_y |f_B|^2 },$$
cf. \cite{Yau}.
One can write $\displaystyle \rho_y=\om|_{X_y}+\ddc \varphi_y$,
where the function $\varphi_y$ is uniquely determined
by the normalization
\begin{equation}
\label{normvp}
\int_{X_y}\varphi_y \,\frac{\Omm}{|f_B|^2 } =0.
\end{equation}
For each $y\in U\subset Y°$, the current $\rho_y$ is reasonably
well understood: it has H\"older potentials, and it is quasi-isometric to a metric with conic singularities along $B$, cf.\ \cite{GP}.

\noindent
We analyze next its regularity properties in the ``base directions''; this will allow us to derive a few interesting geometric consequences.

The function $\vp$ defined on $X°$ by $\vp(x):=\vp_{p(x)}(x)$ is a locally bounded function on $X°$ (by the family version of Ko\l odziej's estimates cf. \cite{DDGHKZ}) hence it induces a $(1,1)$ current
\begin{equation}
\label{rho}
\rho:=\om+\ddc \varphi
\end{equation}
on $X°$.
Let $\Delta\subset Y°$ be a small, 1-dimensional disk. If $\Delta$ is generic enough, then the inverse image $\cX:= p^{-1}(\Delta)$ is non-singular, and the restriction map $p:\cX\to \Delta$ is a submersion.
We denote by
$t$ a holomorphic coordinate on the disk $\Delta$. Following \cite{Siu86}
we recall next the expression of the \emph{horizontal lift} of the local
vector field $\displaystyle \frac{\partial}{\partial t}$.
For the moment, this is a vector field $v_\rho$ with
distribution coefficients on the total space $\cX$ given by the expression
\begin{equation}\label{hlift}
v_\rho:= \frac{\partial}{\partial t}- \sum_\alpha\rho^{\overline{\beta}\alpha}\rho_{t\overline{\beta}}\frac{\partial}{\partial z_{\alpha}},
\end{equation}
where the notations are as follows. We denote by $(z_1, \dots, z_n, t)$
a co-ordinate system centered at some point of $\cX$, and
$\rho_{t{\overline\beta}}$ is the coefficient of $dt\wedge d{\overline{z}_\beta}$. We denote by $\left(\rho^{\overline{\beta}\alpha}\right)$ the coefficients of the inverse of the matrix $\left(\rho_{\alpha \overline{\beta}}\right)$.

\medskip

\noindent The reflexive hull of the direct image
\begin{equation}
\label{mp4}
\cF_m:= p_\star\left(m(K_{X/Y}+ B)\right)^{\star\star}
\end{equation}
plays a key role in study of the geometry of algebraic fiber spaces.
It admits a positively curved singular metric whose construction
we next recall, cf. \cite{BP, PT} and the references therein.

Let $\sigma\in H^0(U, \cF_m|_U)$ be a local holomorphic section of the
line bundle $\cF_m$ defined over a small coordinate set $U\subset Y°$.
The expression
\begin{equation}
  \label{metrcon}
\Vert \sigma\Vert^2_y:= V_y^{m-1}\int_{X_y}\frac{|\sigma|^2}{|\Omega_y|^{2\frac{m-1}{m}}}e^{-\phi_B}
\end{equation}
defines a metric $h$ on $\displaystyle \cF_m|_{Y°}$. It is remarkable that this metric extends across the singularities of the map $p$, and it has semi-positive curvature current, see \emph{loc. cit.} for more complete statements.

\medskip

\subsection{Main results}
This sub-section aims to the proof of the following results.

\begin{theo}
\label{thm:cyvar}
Let $p:(X, B)\to Y$ be a proper holomorphic map between K\"ahler manifolds as in \eqref{15aout1}. We assume moreover that the curvature of $\cF_m$ with respect to the metric in \eqref{metrcon} equals zero when restricted to $Y°$.
Then the $(1,1)$-current $\rho$ defined on $X°$ by \eqref{rho} is semipositive and it extends canonically to a closed positive current on $X$ in the cohomology class $\{\om\}$.
\end{theo}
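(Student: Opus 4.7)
The plan is to realize the relative Ricci-flat conic current $\rho$ as the limit, as $\ep\downarrow 0$, of a family of twisted conic K\"ahler-Einstein currents of negative Ricci type, for which fiberwise plurisubharmonic variation is already established in \cite{Gue16}. Concretely, on each fiber $X_y$ and for each $\ep>0$ I would introduce the twisted equation
\[
\Ric\om_{\ep,y}=-\om_{\ep,y}+\ep\,\om|_{X_y}+[B_y], \qquad [\om_{\ep,y}]=\ep\{\om|_{X_y}\},
\]
which is cohomologically consistent thanks to \eqref{mp1} and admits a unique conic solution by the Aubin-Yau-type existence theory in the log setting. Writing $\om_{\ep,y}=\ep\om|_{X_y}+\ddc\psi_{\ep,y}$ and trivializing $K_{X_y}+B_y$ via the section $\Omega$ from \eqref{mp2}, the equation reduces to a conic Monge-Amp\`ere equation whose right-hand side differs from the Ricci-flat one $\frac{\Omm}{V_y|f_B|^2}$ only by the factor $e^{\psi_{\ep,y}}$. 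Setting $u_{\ep,y}:=\psi_{\ep,y}/\ep$, I would then invoke Ko\l{}odziej-type stability in the family conic setting (as in \cite{DDGHKZ, GP}) to prove $u_{\ep,y}\to\vp_y$ locally uniformly on $X°\ssm\Supp(B)$ as $\ep\downarrow 0$, up to an additive $y$-dependent constant fixed by the normalization \eqref{normvp}.

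Second, for each fixed $\ep>0$ the fiberwise solutions glue into a relative current $\om_\ep=\ep\om+\ddc\Psi_\ep$ on $X°$, and here the hypothesis on $\cF_m$ enters in an essential way: the Hermitian flatness of the Narasimhan-Simha metric \eqref{metrcon} provides a canonical, curvature-free reference with respect to which the singular fiber volume forms assemble on $X°$ with vanishing horizontal contribution. This is precisely what is needed to place ourselves in the framework of the main theorem of \cite{Gue16}, which then yields the plurisubharmonic variation of the twisted log K\"ahler-Einstein family and hence $\om_\ep\ge 0$ on $X°$. Dividing by $\ep$ and letting $\ep\downarrow 0$, the weak convergence $\Psi_\ep/\ep\to\vp$ from the first step then produces $\rho=\om+\ddc\vp\ge 0$ on $X°$.

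For the extension to $X$, since $\vp$ is locally bounded on $X°$ by the family version of Ko\l{}odziej's theorem (cf. \cite{DDGHKZ}) and $X\ssm X°$ is a proper analytic subset of $X$, the standard extension theorem for quasi-psh functions across analytic sets (via upper semicontinuous regularization) will deliver a unique $\om$-psh function on $X$ whose associated current extends $\rho$ to a closed positive current in the class $\{\om\}$. The hard part will be the second step: one has to verify precisely that the Hermitian flatness of $\cF_m$ is the exact hypothesis that allows \cite{Gue16} to apply in our Calabi-Yau situation, where the bundle $K_{X/Y}+B$ itself carries no positively curved metric a priori. Intuitively, flatness suppresses the horizontal curvature contribution that would otherwise obstruct semipositivity of $\om_\ep$ in the total space; once this identification is secured, the uniform $\ep$-estimates, the conic regularity, and the weak limit extraction should follow from the now-standard machinery of \cite{Gue16, GP, DDGHKZ}.
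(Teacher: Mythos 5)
Your strategy coincides with the paper's: approximate the relative conic Ricci-flat current by twisted log K\"ahler-Einstein currents $\rho_\ep=\ep\om+dd^c\vp_\ep$ of negative Ricci type, use the flatness of $\cF_m$ to place yourself in the setting of \cite{Gue16} for each $\ep>0$, and then divide by $\ep$ and pass to the limit via Lemma~\ref{prop:conv}. On the crucial second step you are slightly vague about how flatness is used (``vanishing horizontal contribution''), whereas the precise mechanism is that parallel transport produces a \emph{holomorphic} section of $m(K_{X/Y}+B)$ of constant unit norm over a small base disk $U$, hence a flat Hermitian metric on $K_{X/Y}+B$ over $p^{-1}(U)$, which is exactly the input for \cite{Gue16}; but this is the right idea and you flag it as the point requiring care.

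The genuine gap is in the extension step. You claim that, because $\vp$ is locally bounded on $X°$ and $X\ssm X°$ is a proper analytic subset, a ``standard extension theorem for quasi-psh functions across analytic sets'' applies after usc regularization. This does not follow. The set $X\ssm X°$ is typically a divisor, so the Grauert--Remmert removal-of-singularities theorem for psh functions (which requires $\codim\geq 2$) is unavailable, and the other standard extension statement requires $\vp$ to be \emph{bounded above near} $X\ssm X°$ --- which is precisely what has to be proved, and which ``locally bounded on $X°$'' does not supply (local $L^\infty$ bounds degenerate as $y$ approaches $Y\ssm Y°$). The paper resolves this by first invoking the result of \cite{Gue16} that each $\psi_\ep$ already extends to an $\om$-psh function on all of $X$ (hence $\sup_{U\cap X°}\psi_\ep<+\infty$ for each $\ep$), and then running a compactness and Hartogs argument on the normalized family $\widetilde\psi_\ep:=\psi_\ep-\sup_U\psi_\ep$ to show that $\sup_U\psi_{\ep_k}$ converges, whence $\sup_{U\cap X°}\vp<+\infty$. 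Your proof needs a substitute for this: boundedness on $X°$ plus usc regularization alone cannot control $\vp$ from above near the singular fibers, and without that bound there is no extension and no well-defined positive current on $X$.
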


 For example, if we assume that $Y$ is compact, then the curvature of $\cF_m$ will automatically be
zero if $c_1(\cF_m)= 0$ thanks to the properties of the metric
\eqref{metrcon} discussed above, cf \cite[Thm.~5.2]{CP17}.

What we mean by the word canonical in the Theorem~\ref{thm:cyvar} above is that the local potential $\vp$ of $\rho$ are locally bounded above across $X\ssm X°$.

\medskip

\noindent We equally prove the next statement.

\begin{theo}
\label{foliationiso}
We assume that the hypothesis in Theorem \ref{thm:cyvar} are satisfied. Then, $p$ is locally trivial over $Y°$, that is, for every $y\in Y°$, there exists a neighborhood $ U \subset Y°$ of $y$ such that
$$(p^{-1}(U), B) \simeq (X_y, B|_{X_y}) \times U .$$
Moreover, if $p$ is smooth in codimension one, then $p$ is locally trivial over the whole $Y$ provided that $\codim_{X\ssm X°}(B\ssm X°)>0$.

\end{theo}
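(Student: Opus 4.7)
The plan is to exploit the horizontal lift construction \eqref{hlift}: to prove local triviality over $Y°$, it suffices to show that for every local holomorphic vector field $\partial/\partial t$ on a small disk $\Delta\subset Y°$, the horizontal lift $v_\rho$ with respect to the relative Ricci-flat conic current $\rho$ is in fact a holomorphic vector field on $p^{-1}(\Delta)$ (more precisely, on $p^{-1}(\Delta)\ssm \Supp(B)$) whose flow preserves $B$. Once this is established, integrating the resulting family of commuting holomorphic vector fields on an open neighborhood $U\subset Y°$ yields a biholomorphism $(p^{-1}(U),B)\simeq (X_y,B_y)\times U$.

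To access the regularity of $v_\rho$ I would use the approximation strategy alluded to in the introduction: replace the singular volume element $\Omm/(V_y|f_B|^2)$ by a smoothed version $\mu_\delta$ on $X°$ in which the conic weight $|f_B|^{-2}$ is replaced by $(|f_B|^2+\delta^2)^{-1}$ (and analogous regularizations near multiplicity drops), and solve the fiberwise Monge--Ampère equation $\tau_{\delta,y}^n=\mu_{\delta,y}$ in $\{\omega_y\}$. Each $\tau_\delta$ is smooth with positive fiberwise volume, so its horizontal lift $v_\delta$ of $\partial/\partial t$ is a smooth vector field on $p^{-1}(\Delta)$, and the estimates of \cite{GP} combined with the Ko\l odziej-type uniform bounds give locally uniform convergence $\tau_\delta\to\rho|_{p^{-1}(\Delta)}$, and $v_\delta\to v_\rho$ on compact subsets of $p^{-1}(\Delta)\ssm\Supp(B)$.

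The crux of the argument is then to compute the geodesic curvature $c(\tau_\delta):=\tau_\delta(v_\delta,\overline{v_\delta})$ and show that at the limit it is a nonnegative constant identically equal to zero. Following Schumacher's computation, $c(\tau_\delta)$ satisfies a fiberwise linear elliptic equation of the shape $(\Delta_{\tau_{\delta,y}}+1)\,c(\tau_\delta)=|\bar\partial v_\delta|^2_{\tau_\delta}+\text{(terms controlled by $\mu_\delta$)}$, so integrating the volume-form expansion along the fiber connects $\int_{X_y}c(\tau_\delta)\,\mu_{\delta,y}$ with the curvature of the $L^{2/m}$ Narasimhan--Simha metric \eqref{metrcon} on $\cF_m$ pulled back to $\Delta$. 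By the hypothesis that $\cF_m$ is Hermitian flat over $Y°$, this pulled-back curvature is zero; passing to the limit $\delta\to 0$ using Theorem~\ref{thm:cyvar} (which provides the semipositivity of $\rho$ and hence the nonnegativity of $c(\tau_\delta)$ at the limit) forces $\bar\partial v_\rho=0$ distributionally on $p^{-1}(\Delta)\ssm\Supp(B)$. I expect this PDE/limit step to be the main obstacle, since one has to argue that the Schumacher identity survives the conic singularities, that the boundary terms along $B$ vanish (using $\codim_{X_y}(B_y^{\mathrm{sing}})\ge 2$ and the conic quasi-isometry of $\rho$), and that the ``positivity'' contributions coming from the curvature of $h$ genuinely dominate $|\bar\partial v_\delta|^2_{\tau_\delta}$. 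Once $v_\rho$ is holomorphic on $p^{-1}(\Delta)\ssm\Supp(B)$ and bounded, Riemann extension extends it across $\Supp(B)$, and because it preserves $\rho$ whose potential has conic behavior along $B$, one concludes that its flow fixes $\Supp(B)$ setwise and in fact respects the multiplicities, i.e.\ preserves $B$ as a divisor.

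For the second assertion, suppose $p$ is smooth in codimension one and $\codim_X(B\ssm X°)>1$. Then $Y\ssm Y°$ has codimension $\geq 2$ along the generic point of each irreducible component of the discriminant. The local trivializations produced above over $Y°$ give holomorphic maps from small punctured neighborhoods of codimension-$\geq 2$ loci in $Y$ into the moduli/automorphism sheaf of $(X_y,B_y)$, and standard Hartogs extension, together with the properness of $p$ and a Fischer--Grauert type argument applied to the smooth codimension-one part, allows one to extend the local trivializations across $Y\ssm Y°$, yielding local triviality of $(X,B)\to Y$ over all of $Y$.
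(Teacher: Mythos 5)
Your proposal follows the same overall strategy as the paper: regularize the conic volume element, compute the geodesic curvature of the resulting smooth relative metric $\tau_\delta$ via the Schumacher identity, deduce that the horizontal lift is holomorphic in the limit, and integrate the resulting vector fields to trivialize the family. However, there are two genuine errors, one conceptual and one in the second assertion.

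First, you assert that the geodesic curvature $c(\tau_\delta)$ converges to a constant \emph{identically equal to zero}. This is wrong and would in fact force $\rho^{n+1}=0$, a degeneracy that need not hold (already for a product $T\times\DD$ with a product K\"ahler metric the geodesic curvature of the relative Ricci-flat form is a nonzero constant). What the argument actually establishes is that $c(\tau_\delta)$ approaches a \emph{bounded constant} (Propositions~\ref{conv2} and \ref{bounded}), and that the vanishing one wants is not that of $c(\tau_\delta)$ but of $\int_{X_t}|\bar\partial v_\delta|^2\,\tau_\delta^n$: one integrates the identity \eqref{mp25} to obtain $\int|\bar\partial v_\delta|^2 = \int\Theta_\delta(K_{\cX/\Delta})(v_\delta,\bar v_\delta)$, and then one must show that the right-hand side tends to zero. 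The flatness hypothesis takes care of the "Narasimhan--Simha" contribution, but there are additional conic correction terms (the second and third terms in \eqref{mp33}) and the normalizing-constant term $\partial_t\partial_{\bar t}C_\delta(t)$, whose vanishing \eqref{mp40} is a nontrivial computation; your proposal does not address these. Moreover, fiberwise holomorphicity is not enough: to get holomorphicity of $v_\rho$ in the base direction one also needs $dc(\tau_\delta)\to 0$ locally uniformly (Corollary~\ref{const}) together with the identity $\bar\partial_t v_\delta\intprod\tau_\delta = \bar\partial c(\tau_\delta) - i\tau_\delta(\bar\partial v_\delta,\bar v_\delta)$ from \cite{BoBJDG}; this step is absent from your sketch.

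Second, the extension across $Y\ssm Y°$ via "Hartogs over codimension-two loci in $Y$" is not justified and in fact not available: the hypothesis that $p$ is smooth in codimension one refers to the critical locus in $X$, and the discriminant $Y\ssm Y°$ is typically a divisor in $Y$, so there is no codimension-two set in $Y$ to apply Hartogs to. The paper instead extends the holomorphic \emph{vector fields} $v_k$ across $X\ssm X°$ directly: the positivity and finite mass of $\rho$ on $X$ give $\int_{X°}\rho^n\wedge\om_X^m<+\infty$, which combined with the fiber-integral identity \eqref{vvb} and the Monge--Amp\`ere equation yields a uniform $L^2$ bound for $\prod_k|v_k|^2_{\om_X}$; since each $|v_k|^2$ is bounded below near a generic point of a singular fiber (where $p$ is still a submersion), one gets $v_k\in L^2$ and then Riemann extension applies. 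This is a concrete argument that uses the current-theoretic extension of $\rho$ from Theorem~\ref{thm:cyvar}, and it is the step where the hypotheses on $p$ being smooth in codimension one and $\codim(B\ssm X°)>1$ are actually consumed.
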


\noindent In particular, under the assumptions in the second part of Theorem \ref{foliationiso} the map $p$ is automatically a locally isotrivial submersion.

\medskip

\noindent As an application, we establish the following result; it partially generalizes to the K\"ahler case a theorem of F. Ambro \cite{Amb05}.

\begin{coro}
\label{ambro}
Let $p: X \rightarrow Y$ be a fibration between two compact K\"ahler manifolds. Let $B$ be a $\Q$-effective klt divisor on $X$ with snc support.
\begin{enumerate}

\item \label{part1cor} If $-(K_X +B)$ is nef, then $-K_Y$ is pseudo-effective.

\item \label{part2cor} Moreover, if $c_1 (K_X +B)=0$ and $c_1 (Y)=0$, then $p$ is locally trivial, that is, for every $y\in Y$, there exists a neighborhood $U \subset Y$ of $y$ such that
$$(p^{-1}(U), B) \simeq (X_y, B|_{X_y}) \times U .$$
In particular, if $c_1 (K_X +B)=0$, the Albanese map $p: X\rightarrow Alb (X)$ is locally trivial.
\end{enumerate}
\end{coro}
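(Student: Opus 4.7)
My strategy is to deduce part (2) from the main Theorem~\ref{foliationiso} and part (1) from positivity properties of direct images of pluri-log-canonical bundles.

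For \textbf{part (2)}, I verify the two hypotheses of Theorem~\ref{foliationiso}. Hypothesis (i) is immediate from adjunction: $c_1(K_X+B)=0$ gives $c_1(K_{X_y}+B_y)=0$ for every $y\in Y°$. For hypothesis (ii), I exploit the klt log Calabi-Yau nature of the fibers together with the Ohsawa--Takegoshi-type extension recalled in Section~\ref{15aout1} to see that $\cF_m := p_\star(m(K_{X°/Y°}+B))$ is a line bundle on $Y°$ for $m$ sufficiently divisible, and that moreover $m(K_{X°/Y°}+B) = p^*\cF_m$ there. Taking Chern classes and using $c_1(K_X+B)=0$, $c_1(K_Y)=0$, and the injectivity of $p^*$ on $H^{1,1}$, I get $c_1(\cF_m)=0$ on $Y°$; this extends to $Y$ by reflexivity. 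Since the Narasimhan--Simha metric \eqref{metrcon} is semipositively curved and extends across $Y\ssm Y°$, while its curvature current represents the trivial class on the compact Kähler manifold $Y$, the $\partial\bar\partial$-lemma together with the maximum principle for psh functions force this curvature to vanish identically on $Y°$, so $\cF_m$ is Hermitian flat there. Theorem~\ref{foliationiso} then produces local triviality over $Y°$; to upgrade this to all of $Y$ I check the auxiliary conditions of the second part of that theorem (smoothness of $p$ in codimension one and $\codim(B\ssm X°)>1$) via a purity argument exploiting $c_1(Y)=0$ and the isotriviality already established over $Y°$.

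For \textbf{part (1)}, the key identity is $p^*(-K_Y) = -(K_X+B) + (K_{X/Y}+B)$. The first summand is nef by hypothesis; for the second, when general fibers satisfy $\kappa(K_{X_y}+B_y)\ge 0$, the Narasimhan--Simha metric on $p_\star(m(K_{X/Y}+B))^{\star\star}$ produces, via the evaluation map, a singular Hermitian metric of semipositive curvature on $m(K_{X/Y}+B)$ (the case $\kappa=-\infty$ reduces to this via a relative log MMP). Summing yields a positive current in $c_1(p^*(-K_Y))$, which descends to $Y$ by injectivity of $p^*$ on real $(1,1)$-classes. For the \textbf{Albanese application}, the assumption $c_1(K_X+B)=0$ gives $-K_X\equiv B\ge 0$, which is pseudoeffective, so the Albanese map $\alpha:X\to \Alb(X)$ is surjective with connected fibers by classical results; since $c_1(\Alb(X))=0$, part (2) applies to $\alpha$.

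The \textbf{main obstacle} I expect is the Hermitian flatness of $\cF_m$ in part (2): one must both establish $c_1(\cF_m)=0$ globally on $Y$ and promote the semipositivity of the (possibly singular) Narasimhan--Simha metric to genuine flatness on $Y°$, which requires delicate use of the compact Kähler structure of $Y$ and of the extension properties of the direct image metric across $Y\ssm Y°$. A secondary technical difficulty is verifying the auxiliary codimension hypotheses of Theorem~\ref{foliationiso} needed to extend local triviality from $Y°$ to the entire base $Y$.
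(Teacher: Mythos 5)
Your verification of hypothesis (ii) of Theorem~\ref{foliationiso} in part~(2)---that $c_1(\cF_m)=0$, and that a semipositively curved metric in a trivial class on a compact K\"ahler base must be flat---is sound and matches the remark the paper records after Theorem~\ref{thm:cyvar} (citing \cite[Thm.~5.2]{CP17}). However, there are two genuine gaps.

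First, in part~(2) your extension of local triviality from $Y°$ to all of $Y$ is not an argument: ``a purity argument exploiting $c_1(Y)=0$ and the isotriviality already established over $Y°$'' does not establish that $p$ is smooth in codimension one. The paper proves this via Q.~Zhang's argument: supposing there were a divisor $D\subset X$ with $\codim p_*(D)\ge 2$, one blows up $p_*(D)$ and resolves to get $\tau:Y'\to Y$ with $K_{Y'}\sim E_{Y'}$ an effective exceptional divisor (here $c_1(Y)=0$ enters), takes $p':X'\to Y'$ resolving the induced map, writes $(p')^*(-K_{Y'})+E_{X'}\equiv_\Q K_{X'/Y'}+B'$ from $c_1(K_X+B)=0$, concludes pseudoeffectivity of $K_{X'/Y'}+B'$ from \cite{Gue16}, and derives a contradiction by pushing forward and comparing with $[D]$. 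Separately, one needs that $p$ is reduced in codimension one, which the paper deduces from the inequality $K_{X/Y}+B\ge\sum(a_i-1)_+[D_i]$ of \cite[Thm.~2.4]{CP17} together with $K_{X/Y}+B\equiv_\Q 0$. Neither of these steps appears in your proposal.

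Second, in part~(1) your route to pseudoeffectivity of $K_{X/Y}+B$ has a hole: the hypothesis is only that $-(K_X+B)$ is \emph{nef}, which gives $-(K_{X_y}+B_y)$ nef on fibers but does not force $\kappa(K_{X_y}+B_y)\ge 0$. Your fallback to ``a relative log MMP'' for the $\kappa=-\infty$ case is not available in the compact K\"ahler category. The paper avoids this entirely by using the same decomposition $-p^*K_Y = K_{X/Y}+(-K_X-B)+B$ but applying \cite{Gue16} directly to the \emph{twisted} relative adjoint bundle on the right: since $L:=-(K_X+B)$ is nef and $c_1(K_{X_y}+L_y+B_y)=0$, the relative twisted conic K\"ahler--Einstein construction of \cite{Gue16} yields a closed positive current in the class of the right-hand side, with no assumption on the Kodaira dimension of the fibers.
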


\subsection{Proof of Theorem \ref{thm:cyvar}}
We will proceed by approximation, mainly using the following lemma
combined with the results in \cite{Gue16}.
\medskip

\noindent The next statement will enable us to reduce the problem to the
canonically polarized pairs.
\begin{lemm}
\label{prop:conv}
Let $X$ be a compact K\"ahler manifold and let $B$ be an effective divisor such that $(X, B)$ is klt. We assume that $c_1 (K_X +B)=0$.
Let $\om$ be K\"ahler form on $X$.
For every $\ep>0$,  let $\rho_\ep \in \ep\{\om\}$ be the unique twisted conic K\"ahler-Einstein metric such that
\begin{equation}
\label{mp}
\Ric \rho_\ep=-\rho_\ep+\ep \om +[B] .
\end{equation}
Let $\rho \in \{\omega\}$ be the unique conic K\"ahler-Einstein metric such that $\Ric \rho = [B]$. Then
$$\lim_{\ep \to 0}\frac{1}{\ep} \rho_\ep=\rho$$
where the convergence is smooth outside $\Supp(B)$.
\end{lemm}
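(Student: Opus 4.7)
The plan is to rewrite both equations as complex Monge-Ampère equations for potentials in the common Kähler class $\{\om\}$, obtain a uniform $L^\infty$ bound on the rescaled potentials via Kolodziej-EGZ estimates, and then pass to the limit. Using that $c_1(K_X+B) = 0$ and the $\ddc$-lemma, one produces a finite measure $\mu_B$ on $X$ -- comparable, up to a smooth positive factor, to $\om^n/|s_B|^{2}_{h_B}$ -- with $L^p$-density relative to $\om^n$ for some $p>1$ by the klt hypothesis. After an overall normalization, the Ricci-flat equation reads $(\om+\ddc\psi)^n = \mu_B$ with $\int_X \mu_B = \int_X \om^n$. Writing $\rho_\ep = \ep\om + \ddc\psi_\ep$, the twisted equation takes the form $(\ep\om+\ddc\psi_\ep)^n = e^{\psi_\ep + F_0}\mu_B$ for a fixed smooth function $F_0$, and $\psi_\ep$ is rigidly determined by the $e^{\psi_\ep}$ factor. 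Setting $\tilde\rho_\ep := \ep^{-1}\rho_\ep = \om+\ddc u_\ep$ with $u_\ep := \psi_\ep/\ep - (n\log\ep)/\ep$ converts the twisted equation into
\[
(\om + \ddc u_\ep)^n = e^{\ep u_\ep + F_0}\mu_B,
\]
whose formal $\ep\to 0$ limit is the Ricci-flat equation.

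The main step is a uniform $L^\infty$ bound on $u_\ep$. Normalizing $\sup_X u_\ep = 0$, one has $e^{\ep u_\ep}\le 1$, whence the density $e^{\ep u_\ep + F_0}\mu_B/\om^n$ is uniformly bounded in $L^p(\om^n)$. Kolodziej's $L^\infty$ estimate in the klt setting (Eyssidieux-Guedj-Zeriahi) then yields $\|u_\ep\|_\infty \le C$ independently of $\ep$. The consistency of the normalization $\sup u_\ep = 0$ with the rigid original equation is checked via the volume-matching identity $\int_X e^{\ep u_\ep + F_0}\mu_B = \int_X \om^n$, which forces the additive shift needed to make the equation hold to be $O(\ep)$, hence absorbable in the uniform bound. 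Standard Hartogs compactness of $\om$-psh functions then extracts a subsequential $L^1$-limit $u$, upgradable to uniform convergence by the Kolodziej bound. Since $\ep u_\ep \to 0$ uniformly, passage to the limit in the Monge-Ampère equation gives $(\om + \ddc u)^n = \mu_B$; EGZ uniqueness forces $u = \psi$, and the entire family converges: $\tilde\rho_\ep \to \rho$ as currents.

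For the smooth convergence off $\Supp B$: on any compact $K \Subset X \setminus \Supp(B)$ the density $\mu_B/\om^n$ is smooth and uniformly bounded from above and below, so the Monge-Ampère equation for $u_\ep$ is uniformly non-degenerate with smooth coefficients on $K$. Combined with the uniform $L^\infty$ bound on $u_\ep$, a Chern-Lu/Yau-type second-order estimate bounds $\Delta_\om u_\ep$ uniformly on $K$; Evans-Krylov then produces $C^{2,\alpha}$ bounds, and Schauder bootstrapping gives $C^{k,\alpha}$ bounds at every order. Hence $u_\ep \to \psi$ in $C^\infty_{\mathrm{loc}}(X \setminus \Supp B)$, which is the asserted smooth convergence of $\ep^{-1}\rho_\ep$ to $\rho$ away from $B$.

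The hard part will be the uniform $L^\infty$ estimate: the logarithmic shift $(n\log\ep)/\ep$ introduced by the rescaling interacts with the twist $e^{\ep u_\ep}$ and with the fact that the original twisted equation admits no free additive constant, so one must combine the maximum principle (controlling $\ep\sup u_\ep + F_0$) with the volume constraint (pinning down the mean of $u_\ep$) and exploit the klt integrability of $\mu_B$ in an essential way in order to run the Kolodziej-EGZ argument uniformly in $\ep$.
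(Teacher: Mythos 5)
Your proposal follows essentially the same strategy as the paper's proof: rescale to a Monge--Amp\`ere equation of the form $(\om+\ddc u_\ep)^n = e^{\ep u_\ep}\mu$ with $\mu$ the canonical klt probability measure, obtain a uniform $L^\infty$ bound via Ko\l odziej/EGZ, extract a limit, and upgrade to smooth convergence off $\Supp(B)$ by local elliptic bootstrapping. Your rescaled unknown $u_\ep = \psi_\ep/\ep - (n\log\ep)/\ep$ is exactly the paper's $\vp_\ep/\ep$ once the factor $\ep^n$ is kept explicit in the volume form, so the algebra is the same.

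The one place your argument is organized differently (and where it is also loose) is the $L^\infty$ bound. The paper first applies Jensen's inequality to $1 = \int_X e^{\ep\psi_\ep}\,\mu$ to get $\int_X \psi_\ep\,\mu \le 0$, then uses compactness of $\om$-psh functions normalized against a klt measure to bound $\sup\psi_\ep$ from above, then Ko\l odziej for the oscillation, and finally volume matching to conclude $\inf\psi_\ep \le 0 \le \sup\psi_\ep$. You instead normalize $\sup u_\ep = 0$, apply Ko\l odziej to the shifted function, and then invoke volume matching to control the shift. This works, but only because the reference measure has been normalized to a probability measure \emph{with the $F_0$ factor absorbed} (i.e.\ $\int_X e^{F_0}\mu_B = \int_X\om^n$); with $\mu_B$ normalized but $e^{F_0}$ left outside, the volume-matching identity would force $\sup u_\ep \sim -\ep^{-1}\log\int e^{F_0}\mu_B$ to blow up. You should make this explicit. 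Also, the conclusion of the volume-matching step is that $\sup u_\ep = O(1)$, not $O(\ep)$ as you wrote; it is $\ep\sup u_\ep$ that is $O(\ep)$. Finally, the paper additionally derives $\int_X\psi_\ep\,\om_{\psi_\ep}^n \ge 0$ (a second application of Jensen) in order to pin down the normalization of the limit potential, not just the limit metric; since the statement of the Lemma concerns metrics, your appeal to uniqueness of the Ricci-flat representative is enough for the stated conclusion, so this omission does not create a gap.
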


\begin{proof}
Let $m\in\N$ such  that $m (K_X +B)$ is effective. Let $\Omega\in H^0 (X, m (K_X +B))$ be a holomorphic section normalized such that
\begin{equation}
\label{mp10}
\int_X \frac{\Ommx}{|f_B|^2 } =1.
\end{equation}
There exists a unique function $\vp_{\ep}$ on $X$ such that
\begin{enumerate}
\item $\rho_{\ep}=\ep \om+\ddc \vp_{\ep}$
\item $\rho_{\ep}^n=\ep^n e^{\vp_{\ep}} \frac{\Ommx}{|f_B|^2 }$
\end{enumerate}

\noindent
Now, let us set $\psi_{\ep}:=\frac 1{\ep}\vp_{\ep}$. One has $\frac 1{\ep} \rho_{\ep}=\om+\ddc \psi_{\ep}$ and
\begin{equation}
\label{MAe} (\om+\ddc \psi_{\ep})^n=e^{\ep \psi_{\ep}} \frac{\Ommx}{|f_B|^2 }
\end{equation}
As $\frac{\Ommx}{|f_B|^2 }$ and $(\frac 1{\ep} \rho_{\ep})^n$ are probability measures and $\psi_{\ep}$ is $\om$-psh,
Jensen inequality yields $\int_{X}(\ep \psi_{\ep}) \,\frac{\Ommx}{ |f_B|^2 } \le 0$, and therefore
\begin{equation}
\label{intne}
\int_X \psi_{\ep} \,\frac{\Ommx}{ |f_B|^2 } \le 0.
\end{equation}
As the measure $\frac{\Ommx}{|f_B|^2 }$ integrates every quasi-psh function,
it follows from standard results in pluripotential theory that there exists a constant $C$ such that
\begin{equation}
\label{sup}
\sup_{X}\psi_{\ep}\le C
\end{equation}
By \eqref{MAe}-\eqref{sup} and Ko\l odziej's estimate, one gets
\begin{equation}
\label{osc}
\mathrm{osc}_{X}\psi_{\ep}\le C \\
\end{equation}

\noindent
As $\frac{\Ommx}{ |f_B|^2 }$ and $(\frac 1{\ep} \rho_{\ep})^n$ are probability measures again,
\eqref{MAe} shows that
$$\inf_{X}\psi_{\ep}\le 0 \le \sup_{X}\psi_{\ep} .$$ Combining this information with \eqref{osc}, we obtain the inequality
\begin{equation}
\label{sup3}
|| \psi_{\ep}||_{L^{\infty}(X)}\le C.
\end{equation}
Moreover, Jensen inequality applied to the equation $ \frac{\Ommx}{|f_B|^2 }= e^{-\ep \psi_{\ep}}(\frac 1{\ep} \rho_{\ep})^n$ yields
\begin{equation}
\label{intpo}
\int_{X}\psi_{\ep} \,(\om+\ddc \psi_{\ep})^n \ge 0
\end{equation}
From \eqref{MAe} and \eqref{sup3}, we get uniform estimates at any order for $ \psi_{\ep}$ outside $B$.
If $\psi$ is a subsequential limit of the family $( \psi_{\ep})_{\ep>0}$, it will satisfy
$$(\om+\ddc \psi)^n= \frac{\Ommx}{|f_B|^2 }$$
Combining this information with \eqref{intne} and \eqref{intpo}, we find
$$\int_{X}  \psi \, \frac{\Ommx}{ |f_B|^2 }=0$$
Therefore $\psi$ is uniquely determined, and the whole family $( \psi_{\ep})_{\ep>0}$ converges to $\psi$.
The lemma is thus proved.
\end{proof}
\smallskip

\noindent Now we can prove  Theorem~\ref{thm:cyvar}.

\begin{proof}[Proof of Theorem \ref{thm:cyvar}]
  We fix a reference K\"ahler form $\om$ on $X$, and let $U$ be some small topological open set of $Y°$.
  By hypothesis, the curvature of the bundle $\cF_m|_U$ is identically zero. By using parallel transport, this is equivalent to the existence of a section
\begin{equation}\label{mp11}
    s \in H^0 \left(X_U, m K_{X/Y}+mB|_{X_U}\right)
\end{equation}
whose norm is a constant function on $U$, namely $\| s\|_h (y) =1$ for every $y\in U$.
Let
$$\Omega_y := s |_{X_y} \in H^0 (X_y, m K_{X_y} +m B_y)$$
be the restriction of $s$ to the fibers of $p$.

\noindent
Since $c_1(K_{X_y}+B_y) + \ep\om_{|X_y}$ is a K\"ahler class
for each $\ep > 0$ and for each $y\in Y°$, there exists a unique $\vp_{\ep}$ such that
$$(\ep \om+\ddc \vp_{\ep})^n = \ep^n e^{\vp_{\ep}}   \frac{\Omm}{ |f_B|^2 } \qquad\text{on }X_y. $$
Since $y\in U$ is a regular value, this is equivalent to
$$\Ric{\rho_{\ep,y}}=-\rho_{\ep,y} +\ep\om +[B_y] \qquad\text{on }X_y ,$$
where $\rho_{\ep,y}= \ep \om+\ddc \vp_{\ep} |_{X_y}$.

Next, the section $s$ is holomorphic hence the relative $B$-valued volume forms $\Omm$ induce a metric with zero curvature on $K_{X/Y}+B$ over $p^{-1}(U)$. Because of that,
 $$\rho_\ep :=\ep \om+\ddc \vp_{\ep} $$
  coincides with the current studied in \cite{Gue16} and the content of the main theorem in \textit{loc. cit.} is that $\re$ is positive on $p^{-1}(U)$. Thanks to Lemma \ref{prop:conv}, $\rho$ is the fiberwise weak limit on $p^{-1}(U)$ of the fiberwise twisted K\"ahler-Einstein metrics $\frac1{\ep}\rho_{\ep}$; moreover, the estimate \eqref{sup3} is uniform over $U$, so that $\rho$ is actually the global weak limit of the metrics $\frac{1}{\ep} \re$ on $p^{-1}(U)$. In particular, $\rho \ge 0$ on $p^{-1}(U)$, hence on $X°$.

As for the extension property, it is proved in \cite{Gue16} that $\re$ extends canonically to the whole $X$ as a positive current in $\{\ep \om\}$. This means that given any small neigborhood $U$ of a point $x\in X\ssm X°$, one has $\sup_{U\cap X°} \pse <+\infty$. In other words, $\pse$ extends to an $\om$-psh function on $X$. Now, let us fix $U$ as above. The family of $\om$-psh functions $(\widetilde \psi_{\ep})_{\ep >0}$ on $U$ defined by $\widetilde \psi_{\ep}:=\psi_{\ep}- \sup_U \psi_{\ep}$ is relatively compact. In particular one can find a sequence $\ep_k \to 0$ and an $\om$-psh function $\widetilde \psi$ on $U$ such that $\widetilde \psi_{\ep_k} \to \wt \psi $ a.e. in $U$. Moreover, we know that $\psi_{\ep_k}=\wt \psi_{\ep_k}+\sup_U \psi_{\ep_k}$ converges to the $\om$-psh function $\vp$ a.e. in $U\cap X°$. This implies that $\sup_{U}\psi_{\ep_k}$ converges when $k\to +\infty$. By Hartogs lemma, this implies that $\sup_{U\cap X°} \vp<+\infty$, which had to be proved.
 \end{proof}

\subsection{Proof of Theorem \ref{foliationiso}}
We will proceed in a few steps, roughly as follows.
\smallskip

\noindent $\bullet$ We start by approximating $\rho$ by smoothing the
volume element. Let $\tau_\delta$ be the resulting $\cC^\infty$ form. Then we have $\lim_\delta\tau_\delta= \rho$ in weak sense.
\smallskip

\noindent $\bullet$ We analyze next the behavior of the geodesic curvature of $\tau_\delta$. The main tools are the Laplace equation satisfied by this quantity, cf. \cite{Schum}, and the $\cC^2$-estimates for conic Monge-Amp\`ere equations, cf. \cite{GP}. As a consequence, we first show that
we can extract a limit of
the horizontal lift $v_\delta$ (corresponding to $\tau_\delta$) which is \emph{holomorphic} on the fibers of $p$.
Afterwards we show that the geodesic curvature of
$\tau_\delta$ converges (on $\cX\setminus \Supp(B)$) to a constant as $\delta\to 0$. Finally,
we infer that $v_\delta$ converges to $v_\rho$ uniformly on the complement of the divisor $B$.
\smallskip

\noindent $\bullet$ After completing the previous steps,
we show that $v_\rho$ is in fact holomorphic on the total space $\cX$ by using
a few arguments borrowed from \cite{BoBJDG}.
\smallskip

\noindent $\bullet$ Finally, we show that $v_{\rho}$ extends across the singular locus of $p$ provided that $X$ is compact and $p$ is smooth in codimension one.
\medskip


\subsubsection{Approximation} This is a fairly standard
and widely used procedure, so we will be very brief.

\noindent By hypothesis, we have $B= \sum a_jB_j$ where $a_j\in (0, 1)$ and
$\cup B_j$ has simple normal crossings.
We consider a smooth metric $e^{-\phi_j}$ on the bundle associated to $B_j$; it induces a smooth metric $e^{-\phi_B}:=e^{-\sum a_j \phi_j}$ on the $\Q$-line bundle associated to $B$.
For any $\delta\ge0$ we define the quantity $\Cdy$ as follows
$$e^{-\Cdy}=\int_{X_y}\frac{\Omm}{\prod_j\left(|f_{j}|^2+\delta^2e^{\phi_j}\right)^{a_j} }.$$
Here $\Omega$ is a section of $\cF_m|_\Delta$ whose norm is equal to one at each point, and $f_{j}$ is a local holomorphic function cutting out $B_j$. The expression
$\prod_j \left(| f_{j}|^2+\delta^2e^{\phi_j}\right)^{-a_j}$ is then a globally defined smooth metric on the $\Q$-line bundle associated to $B$. Finally, we let $s_j$ be the canonical section of $\O_X(B_j)$, and we will denote by $|s_j|^2$ the squared norm of $s_j$ with respect to $e^{-\phi_j}$.

\noindent Let us further define the smooth $(1,1)$-form
\begin{equation}\label{mp13}
  \td=\om+\ddc \ud
\end{equation}
on $X°$ such that $\ud |_{X_y}$ is solution of the following system of equations
\begin{equation}
\label{cases}
\begin{cases}
 \,\, (\om+\ddc \ud)^n=e^{\Cdy}\frac{\Omm}{\prod_j\left(|f_{j}|^2+\delta^2e^{\phi_j}\right)^{a_j} }  & \\
 \,\, \int_{X_y}\ud \frac{\Omm}{\prod_j\left(|f_{j}|^2+\delta^2e^{\phi_j}\right)^{a_j}} =0
 \end{cases}.
 \end{equation}
 By the family version of Ko\l odziej's estimates \cite{DDGHKZ}, one can easily see that for any relatively compact subset $U\Subset Y°$, there exists a constant $C>0$ independent of $\delta \in (0,1)$ such that
 \begin{equation}
\label{supsupsup}
\sup_{y\in U}|| \ud||_{L^{\infty}(X_y)}\le C
\end{equation}
As a consequence, we get the following easy result, cf  \eqref{rho} for the definition of $\rho$ and $\varphi$.
\begin{lemm}
\label{conv0}
When $\delta$ approaches zero, $\td$ converges weakly to $\rho$ on $X°$. More precisely, one has $\ud \to \vp$ in $L^1_{\rm loc}(X°)$.
\end{lemm}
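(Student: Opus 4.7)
The plan is to combine Monge--Ampère stability on each fiber with the uniform estimate \eqref{supsupsup} to deduce $L^1_{\rm loc}$ convergence on the total space.

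Fix $y \in Y°$. Since $a_j \in (0,1)$ for all $j$, the measures
$$\mu_{\delta,y} := \frac{\Omm}{\prod_j (|f_j|^2 + \delta^2 e^{\phi_j})^{a_j}}$$
are monotonically increasing as $\delta \searrow 0$, and monotone convergence gives $\mu_{\delta,y} \nearrow \mu_{0,y} := \Omm/|f_B|^2$ together with $\mu_{\delta,y}(X_y) = e^{-\Cdy} \to V_y$. In particular, the normalized probability measures $\td^n|_{X_y}$ converge weakly to $\rho_y^n = V_y^{-1}\mu_{0,y}$.

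By \eqref{supsupsup}, the restrictions $\psi_\delta := \ud|_{X_y}$ are $\omy$-psh and uniformly bounded in $L^\infty(X_y)$, hence precompact in $L^1(X_y)$. Along any subsequence admitting a sub-subsequential $L^1$ limit $u$ (necessarily bounded and $\omy$-psh), the stability theorem for bounded solutions of Monge--Ampère equations with weakly converging right-hand sides (Kołodziej, Dinew--Kołodziej, Eyssidieux--Guedj--Zeriahi) forces $(\omy + \ddc u)^n = \rho_y^n$. The normalization $\int_{X_y} \ud\, \mu_{\delta,y} = 0$ passes to the limit by dominated convergence, using the uniform $L^\infty$ bound on $\ud$ together with the monotone (hence total variation) convergence $\mu_{\delta,y} \nearrow \mu_{0,y}$, so that $u$ satisfies the normalization \eqref{normvp}. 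Yau's uniqueness theorem then yields $u = \vp_y$; since the subsequential limit is unique, the whole family $\ud|_{X_y}$ converges to $\vp_y$ in $L^1(X_y)$.

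To globalize, fix $U \Subset Y°$ relatively compact: by \eqref{supsupsup} and the analogous local boundedness of $\vp$ on $X°$, the difference $\ud - \vp$ is uniformly bounded on $p^{-1}(U)$. Combining fiberwise $L^1$ convergence with dominated convergence applied to a Fubini-type disintegration of a local volume form on $p^{-1}(U)$ yields $\ud \to \vp$ in $L^1_{\rm loc}(X°)$, and weak convergence $\td \to \rho$ follows from the continuity of $\ddc$ on distributions. The only delicate point is the invocation of Monge--Ampère stability with a singular limit measure; this is handled by the uniform $L^\infty$ bound together with the classical continuity results for the MA operator on uniformly bounded $\omy$-psh sequences under weak convergence of probability measures.
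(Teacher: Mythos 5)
Your proposal is correct and follows essentially the same route as the paper: fiberwise convergence via Kołodziej's theory, then globalization via \eqref{supsupsup} and dominated convergence. One small remark: the paper applies Kołodziej's stability theorem \cite[Thm.~4.1]{Kolo03} directly to the pair $(\ud|_{X_y},\vp_y)$ (both solve fiberwise Monge--Ampère equations whose densities are uniformly in $L^p$ for some $p>1$ and converge in $L^1$), which immediately yields uniform fiberwise convergence with no need to extract subsequences; your compactness-plus-uniqueness detour is a valid alternative, but the step ``the stability theorem forces $(\omy+\ddc u)^n=\rho_y^n$'' really hinges on continuity of the Monge--Ampère operator along uniformly bounded $\omy$-psh sequences with $L^p$-bounded densities (which gives convergence in capacity), rather than on a stability estimate per se, so it is worth stating that precisely.
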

\begin{proof}
The convergence $\ud \to \vp$ in $L^1_{\rm loc}(X_y)$ follows from Ko\l odziej's stability theorem \cite[Thm.~4.1]{Kolo03} (one even gets uniform convergence). The convergence on the total space then follows from Lebesgue's dominated convergence theorem coupled with \eqref{supsupsup}.
\end{proof}

\subsubsection{Uniformity properties of $(\tau_\delta)_{\delta> 0}$}
In this subsection we will only consider the restriction of our initial family
of manifolds above a disk in the complex plane
\begin{equation}
  \label{mp20}
  p:\cX\to \Delta
\end{equation}
where we recall that $\Delta\subset Y°$ is generic and $\cX= p^{-1}(\Delta)$.

The coordinate on $\Delta$ will be denoted by $t$. We recall that the
geodesic curvature of the form $\tau_\delta$ is the function defined by the equality
\begin{equation}
  \label{mp21}
  \tau_\delta^{n+1}= c(\tau_\delta)\tau_\delta^n\wedge \sqrt{-1}dt\wedge d{\overline t}
\end{equation}
If $v_\delta$ is the horizontal lift of $\displaystyle \frac{\partial}{\partial t}$ with respect to $\tau_\delta$, then it is easy to verify that we have
\begin{equation}
  \label{mp22}
c(\tau_\delta)= \langle v_\delta, v_\delta\rangle_{\tau_\delta}.
\end{equation}
For each $\delta> 0$, the form $\tau_\delta$ induces a metric say $h_{\delta}$ on the relative canonical bundle $K_{\cX/\Delta}$ as follows.
Let $z_1,\dots,z_n, z_{n+1}$ be a coordinate system defined on the set
$W\subset \cX$. Recall that $t$ is a coordinate on $\Delta$. This data induces in particular a trivialization of $K_{\cX/\Delta}$, with respect to which the weight of $h_\delta$ is given as follows
\begin{equation}
  \label{mp23}
e^{\Psi_\delta(z, t)}dz_1\wedge\dots\wedge dz_{n+1}= \tau_\delta^n\wedge \sqrt{-1}dt\wedge d{\overline t}.
\end{equation}
The curvature of $(K_{\cX/\Delta}, h_\delta)$ is the Hessian of the weight
\begin{equation}
  \label{mp24}
  \Theta_\delta(K_{\cX/\Delta})|_W= dd^c\Psi_\delta.
\end{equation}
\medskip

\noindent We have the following result, relating the various quantities defined above.

\begin{lemm}\label{Laplace} Let $\Delta_\delta^{\prime\prime}$ be the Laplace operator corresponding to the metric $\displaystyle\tau_\delta|_{X_t}$. Then we have
  the equality
\begin{equation}
  \label{mp25}
  -\Delta_\delta^{\prime\prime}\left(c(\tau_\delta)\right)= |\dbar v_\delta|^2- \Theta_\delta(K_{\cX/\Delta})(v_\delta, v_\delta).
  \end{equation}
\end{lemm}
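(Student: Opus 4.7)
The proof is a local coordinate computation, in the vein of Schumacher's identity in \cite{Schum}. The key feature is that nothing beyond $d\tau_\delta = 0$ (which holds since $\tau_\delta = \om + \ddc \ud$) is used; in particular, no fiberwise Einstein condition on $\tau_\delta$ enters.

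I would pick local holomorphic coordinates $(z_1, \ldots, z_n, t)$ on $\cX$ adapted to $p$. The three actors in \eqref{mp25} unfold as
$$c(\tau_\delta) = \tau_{\delta, t\bar t} - \tau_{\delta, t\bar\beta}\, \tau_\delta^{\bar\beta\alpha}\, \tau_{\delta, \alpha\bar t}, \qquad v_\delta = \partial_t - \tau_\delta^{\bar\beta\alpha} \tau_{\delta, t\bar\beta}\, \partial_\alpha,$$
with $\Psi_\delta = \log\det(\tau_{\delta, \alpha\bar\beta})_{1 \le \alpha, \beta \le n}$. A direct differentiation gives
$$(\dbar v_\delta)_{\bar\beta} = -\, \partial_{\bar\beta}\bigl(\tau_\delta^{\bar\gamma\alpha} \tau_{\delta, t\bar\gamma}\bigr)\, \partial_\alpha,$$
a $T_{\cX/\Delta}$-valued $(0,1)$-form along the fiber.

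The main step is then to compute $\Delta_\delta'' c(\tau_\delta) = \tau_\delta^{\bar\beta\alpha} \partial_\alpha \partial_{\bar\beta} c(\tau_\delta)$ by expanding the three factors of $c$. The only algebraic inputs are: (a) the Kähler identities $\partial_i \tau_{\delta, j\bar k} = \partial_j \tau_{\delta, i\bar k}$, valid for horizontal and vertical indices $i, j$ alike since $d\tau_\delta = 0$; (b) the standard formula $\partial_\alpha \tau_\delta^{\bar\beta\gamma} = -\tau_\delta^{\bar\beta\mu} \tau_\delta^{\bar\nu\gamma}\, \partial_\alpha \tau_{\delta, \mu\bar\nu}$ for the derivative of the inverse matrix. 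After expansion, the resulting expression splits into two collections of terms: one reassembles as $-|\dbar v_\delta|^2_{\tau_\delta}$, and the other, after substituting $v_\delta = \partial_t - \tau_\delta^{\bar\beta\alpha} \tau_{\delta, t\bar\beta} \partial_\alpha$ into $\ddc \Psi_\delta$, is precisely $\Theta_\delta(K_{\cX/\Delta})(v_\delta, \bar v_\delta)$.

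The main obstacle is combinatorial: many cross terms appear that mix horizontal and vertical derivatives, and regrouping them into the curvature pairing hinges on the defining identity $\tau_\delta(v_\delta, \partial_{\bar\beta}) = 0$, which both encodes the horizontality of $v_\delta$ and cancels otherwise extraneous contributions. Concretely, this is what forces the Hessian of $\Psi_\delta$ to be evaluated at $v_\delta \otimes \bar v_\delta$ rather than at $\partial_t \otimes \partial_{\bar t}$, and it is where the $\tau_{\delta, t\bar t}$ piece of $c(\tau_\delta)$ gets absorbed into the curvature term. Once the bookkeeping is carried out, \eqref{mp25} drops out directly.
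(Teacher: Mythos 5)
Your proposal is correct and follows essentially the same route as the paper: a local coordinate computation driven by $d\tau_\delta=0$ and the derivative-of-inverse formula, in which the Hessian of $\Psi_\delta=\log\det(\tau_{\delta,\alpha\bar\beta})$ evaluated along $v_\delta\otimes\bar v_\delta$ is recognized as $\Delta''_\delta c(\tau_\delta)+|\dbar v_\delta|^2$ (the paper itself only sketches this and defers to Schumacher and the notes cited as \cite{LongChamp}). The only difference is one of presentation -- you expand the Laplacian of $c(\tau_\delta)$ forward, whereas the paper unfolds $\ddbar\Psi_\delta(v_\delta,\bar v_\delta)$ and reads off the terms -- and your observation that $\tau_\delta(v_\delta,\partial_{\bar\beta})=0$ is the mechanism forcing the curvature to be paired with $v_\delta$ rather than $\partial_t$ is exactly the right structural point.
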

\noindent We will not prove Lemma \ref{Laplace} in detail because this type of results appear in many articles, cf. \cite{Schum} or \cite{Paun12}. The main steps are as follows: we have $\displaystyle \Psi_\delta= \log \det(g_{\alpha \overline\beta})$
where we denote $\displaystyle g_{\alpha \overline\beta}:= \tau_{\delta, \alpha \bar \beta}$
and a few simple computations show that the Hessian of $\Psi_\delta$ evaluated in the $v_\delta$-direction equals

\begin{eqnarray}\label{equa534}
\ddbar\log\det(g_{\alpha\ol\beta})(v_\delta, \ol v_\delta)&=& g^{\alpha\ol \beta}g_{ t\ol t, \alpha \ol \beta}-
g^{\alpha\ol \gamma}g^{\delta\ol \beta}g_{\gamma \ol \delta, \ol t}g_{\alpha\ol \beta, t}
\nonumber\\
&- & g^{\alpha\ol \beta}g_{\alpha \ol \beta, \gamma \ol t}g^{\gamma\ol\mu}g_{t\ol \mu}-
g^{\alpha\ol \beta}g_{\alpha \ol \beta, t \ol \gamma} g^{\mu\ol\gamma}g_{\mu \ol t} \\
& + &
g^{\alpha\ol \beta}g_{\alpha \ol \beta, \gamma\ol \tau}g^{\gamma\ol \mu}g^{\rho\ol \tau}g_{t\ol \mu}g_{\rho \ol t}. \nonumber\\
\nonumber
\end{eqnarray}

\smallskip

\noindent In the rhs term in \eqref{equa534} we recognize the beginning of
$\Delta_\delta^{\prime\prime}\left(c(\tau_\delta)\right)$ (cf. the 1st term), and in the end this gives \eqref{mp25}. Again, we refer to \cite{LongChamp}, pages 18-19
for a detailed account of these considerations.
\qed

\begin{rema}{\rm The equation \eqref{mp25} can be seen as the analogue of the usual ${\mathcal C}^2$ estimates in ``normal directions". By this we mean the following: the ${\mathcal C}^2$ estimates are derived by evaluating the Laplace of the (log of the) sum of eigenvalues of the solution metric with respect to the reference metric. Vaguely speaking, in \eqref{mp25} we compute the Laplace of the normal eigenvalue.}
\end{rema}

\medskip

\noindent The following result is an important step towards the proof of
Theorem \ref{foliationiso}.

\begin{prop}
\label{conv}
Let $t\in \Delta$ be fixed. For any sequence $\delta_j \to 0$,  there exists a holomorphic vector field $w$ on $X_t \ssm \Supp(B)$ such that, up to extracting a subsequence, the sequence $(\vdj|_{X_t})_{j\ge 0}$ converges locally smoothly outside $\Supp(B)$ to the vector field $w$.
\end{prop}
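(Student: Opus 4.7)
The plan is to combine uniform smooth estimates away from $\Supp(B)$ (yielding $C^k_{\mathrm{loc}}$-compactness of the family $(\vdj|_{X_t})$ via Arzel\`a--Ascoli) with the integrated version of Lemma~\ref{Laplace} (which will force any subsequential limit to be $\dbar$-closed on $X_t \ssm \Supp(B)$). Thus the argument will split into a \emph{compactness step} producing a limit $w$ and a \emph{holomorphy step} showing $\dbar w = 0$.

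For the compactness step, on any compact set $K \Subset \cX \ssm \Supp(B)$ the right-hand side of \eqref{cases} is smooth with derivatives bounded uniformly in $\delta$, and $\|\ud\|_{L^\infty} \leq C$ by \eqref{supsupsup}. The standard Evans--Krylov plus Schauder bootstrap applied fiberwise will therefore yield uniform $C^{k,\alpha}$-bounds on $\ud|_{X_t \cap K}$. Differentiating \eqref{cases} once and twice in $t$ gives linear elliptic equations for $\partial_t\ud$ and $\ud_{t\bar t}$ whose coefficients are controlled by the fiber estimates, leading to uniform $C^{k,\alpha}$-bounds on $\ud$ in all variables on $K$. The coefficients $\td^{\bar\beta\alpha}$ and $\td_{\,t\bar\beta}$ in the expression \eqref{hlift} of $v_\delta$ are then uniformly controlled on $K$, so Arzel\`a--Ascoli provides a subsequence $\delta_j \to 0$ with $\vdj|_{X_t} \to w$ in $C^k_{\mathrm{loc}}(X_t \ssm \Supp(B))$ for every $k \geq 0$.

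For the holomorphy step, $\td|_{X_t}$ is K\"ahler on the compact fiber $X_t$ for $\delta > 0$, so Stokes' theorem gives $\int_{X_t} \Delta''_\delta(c(\td))\,\td^n = 0$; integrating Lemma~\ref{Laplace} against $\td^n$ then yields
\[
\int_{X_t} |\dbar v_\delta|^2 \, \td^n \;=\; \int_{X_t} \Theta_\delta(K_{\cX/\Delta})(v_\delta, \overline{v_\delta})\, \td^n.
\]
Taking $\ddc \log$ of \eqref{cases}, the curvature on $\cX \ssm \Supp(B)$ decomposes as
\[
\Theta_\delta(K_{\cX/\Delta}) \;=\; \ddc C_\delta(t) \;+\; \ddc \log(\Omega\wedge\overline\Omega)^{1/m} \;-\; \sum_j a_j \ddc \log(|f_j|^2 + \delta^2 e^{\phi_j}).
\]
I will use three facts: the middle term vanishes identically off $\Supp(B)$, since $\Omega$ is nowhere vanishing and, by the flatness of $\cF_m$, the metric on $K_{X/Y}+B$ trivialized by $\Omega$ has zero curvature; the first term tends to $0$ as $\delta \to 0$ because flatness of $\cF_m$ forces $V_y = \|\Omega\|^2_{\mathrm{NS}}/V_y^{m-1}$ to be constant on $\Delta$, so $C_0 = -\log V$ is constant; and the last term tends pointwise to $0$ off $\Supp(B)$ since $\log |f_j|^2$ is pluriharmonic there. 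Together with the quasi-isometric comparison of $\td$ to a model conic metric from~\cite{GP} (which will provide the dominating bound), dominated convergence will give $\int_{X_t} |\dbar \vdj|^2\, \td^n \to 0$. Combined with the $C^k_{\mathrm{loc}}$-convergence $\vdj \to w$, this forces $\dbar w = 0$ on $X_t \ssm \Supp(B)$.

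The hardest part will be justifying the dominated convergence for the integrated curvature: each piece tends to zero pointwise off $\Supp(B)$, but the volume form $\td^n$ degenerates along that locus while the smoothed $[B]$-contribution $\sum a_j \ddc \log(|f_j|^2+\delta^2 e^{\phi_j})$ simultaneously concentrates there, and one needs to rule out a non-negligible boundary mass produced in the limit. This is precisely where the conic $C^2$ estimates of~\cite{GP} will enter, to bound $|v_\delta|^2_{\td}$ against the concentrated mass in a uniformly integrable way. Obtaining the pointwise convergence $\ddc C_\delta \to 0$ (and not merely $C_\delta \to C_0$) will also require differentiating under the integral sign in the definition of $C_\delta$ and exploiting the smooth $y$-dependence of the regularized volume, again through the flatness hypothesis.
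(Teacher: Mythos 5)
Your overall architecture -- compactness from uniform fiber estimates, then integrating the Laplace equation of Lemma~\ref{Laplace} against $\td^n$ to kill $\int_{X_t}|\dbar v_\delta|^2\,\td^n$ -- matches the paper's. The compactness step is fine (the paper simply cites \eqref{dt} from \cite{Gue16}, which your bootstrap sketch reproduces; note you only need fiber-direction regularity of $\d_t u_\delta$, not full transverse second derivatives, since $v_\delta$ is built from $\td^{\bar\beta\alpha}$ and $\td_{t\bar\beta}$). Your decomposition of $\Theta_\delta(K_{\cX/\Delta})$ into $\ddc C_\delta + \ddc\log\Omm - \sum_j a_j\,\ddc\log(|f_j|^2+\delta^2 e^{\phi_j})$ and the observation that the middle term vanishes are both correct.

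The gap is in how you handle the remaining two terms. When one expands $\ddc\log(|f_j|^2+\delta^2 e^{\phi_j})$ one finds two pieces with different behavior: a piece proportional to $\delta^2\langle D's_j,D's_j\rangle/(|s_j|^2+\delta^2)^2$, and a piece proportional to $\delta^2\Theta_j/(|s_j|^2+\delta^2)$ where $\Theta_j$ is the smooth reference curvature of $(\O(B_j),e^{-\phi_j})$, as in \eqref{mp33}. The second piece has the milder denominator and is killed by Lemma~\ref{4} (a consequence of the $L^2$ estimates \eqref{2}--\eqref{3}). But the first piece does \emph{not} satisfy any dominated-convergence bound against $\td^n$: in a local model the integral $\int \delta^2\,(|z|^2+\delta^2)^{-2}(|z|^2+\delta^2)^{-a}\,d\lambda(z)$ scales like $\delta^{-2a}\to\infty$, so the concentrating mass is genuinely non-negligible, and trying to ``rule it out'' as you propose would fail. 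What the paper does instead is exploit the \emph{sign} of this term: after multiplying by $-a_j$ it contributes a manifestly non-negative quantity which ends up on the same side of the identity \eqref{dbar} as $\int_{X_t}\|\dbar v_\delta\|^2\,\td^n$. One then only needs the \emph{other} side to go to zero, which forces each non-negative summand to vanish in the limit. This sign observation is the crux, and it is what your dominated-convergence plan misses.

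Similarly, your treatment of $\ddc C_\delta\to 0$ is under-argued: knowing that $V_y\equiv 1$ (hence $C_0\equiv 0$) says nothing directly about the second $t$-derivatives of $C_\delta$ for $\delta>0$. The paper establishes $\sup_t|\d_t\d_{\bar t}C_\delta(t)|\to 0$ by writing $C_\delta$ via a partition of unity, differentiating under the integral sign, and integrating by parts to shift derivatives from the singular factor $F_{\alpha,\delta}$ onto smooth test data (see \eqref{mp41}--\eqref{pm03}); this is an actual computation, not a consequence of flatness alone. You should incorporate both of these ingredients to close the holomorphy step.
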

\begin{rema}{\rm
At this point, it is not obvious that $w$ is independent of the sequence $\delta_j$ and that it should coincide with to be the lift $v$ of $\frac{\d}{\d t}$ with respect to $\rho|_{X°\smallsetminus \Supp(B)}$.}
\end{rema}

\noindent Before giving the proof of Proposition \ref{conv} we collect here
a few results concerning the family of forms $\displaystyle (\tau_\delta)_{\delta> 0}$ taken from \cite{GP}
and \cite{Gue16}.

\begin{enumerate}
\smallskip

\item[(a)] It follows from \cite[Sect.~5.2]{GP} that $\td|_{X_y}$ has "uniform regularized conic singularities" in the sense that if
 on a small coordinate open set $\Omega \subset \cX$, the divisor $B$ is given be $B=\sum_1 ^r a_j B_j$ where $B_j$ is defined by $\{z_j =0\}$, then there is a constant $C$ independent of $\delta$ such that for any $y\in U$, we have

 {\footnotesize
\begin{equation}
\label{coniccontr}
C^{-1} \Big(\sum_{k=1} ^r \frac{i d z_k \wedge d \overline{z}_k}{(|z_k|^2 + \delta^2)^{a_k}} + \sum_{k \geq r+1} i d z_k \wedge d \overline{z}_k \Big)
\leq \td|_{X_y\cap \Omega} \leq
C \Big(\sum_{k=1} ^r \frac{i d z_k \wedge d \overline{z}_k}{(|z_k|^2 + \delta^2)^{a_k}} + \sum_{k \geq r+1} i d z_k \wedge d \overline{z}_k \Big)
\end{equation}}
\smallskip

\item[(b)] The estimates \cite[(3.13), Prop.~4.1\&4.2]{Gue16} go through for $\ud$, that is, for any integer $k\ge 0$, there exists $C_k>0$ independent of $\delta \in (0,1)$ such that
 \begin{equation}
\label{dt}
\sup_{t\in \Delta} ||\d_t \ud ||_{\mathcal C^k(\Omega \cap X_t)} \le C_k
\end{equation}
and there exists a constant $C>0$ such that the following global estimate holds:
\begin{equation}
\label{2}
\sup_{t\in \Delta} \int_{X_t} |\vd|^2_{\om}  \,\td^n \le C
\end{equation}
One also gets
\begin{equation}
\label{3}
\lim_{\delta \to 0}\sup_{t\in \Delta} \int_{X_t\cap (\cup \{|s_j|^2<\delta\})} |\vd|^2_{\om}  \,\td^n =0
\end{equation}
\end{enumerate}
\smallskip

\noindent Again, we will not reproduce here the arguments for \eqref{dt}--\eqref{3}, but let us comment e.g. \eqref{2} for the comfort of the reader/referee.
The main observation is that in local coordinates this amounts to obtaining a bound
of $\displaystyle \left|\nabla^\delta(\d_t \ud)\right|^2$ with respect to the volume element $\td^n$ on $X_t$. Here $|\cdot |^2$ is measured with respect to the reference metric $\omega$, and $\nabla^\delta$ is the gradient corresponding to
$\td$. By \eqref{coniccontr} this is smaller than $\displaystyle \left|\nabla^\delta(\d_t \ud)\right|_\delta^2$ up to a uniform constant. This new quantity
is controlled by taking the derivative of the Monge-Amp\`ere equation verified by $\td$ in normal directions and integration by parts. Of course, the real proof is much more involved and we refer to \emph{loc. cit.} for the details.
\medskip

\noindent We see immediately that \eqref{2}-\eqref{3} imply the next statement.
\begin{lemm}
\label{4}
One has the following
$$\lim_{\delta \to 0}\sup_{t\in \Delta} \int_{X_t}\left(\sum_j\frac{\delta^2}{|s_j|^2+\delta^2}\right) |\vd|^2_{\om}  \,\td^n =0.$$
\end{lemm}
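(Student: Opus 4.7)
The strategy is a direct splitting of the domain of integration into the ``near-$B$'' region where the weight $\sum_j \delta^2/(|s_j|^2+\delta^2)$ is of order one, and the ``far-from-$B$'' region where this weight is small. We then bound each contribution using exactly one of the two estimates recalled above: estimate \eqref{3} handles the near region (on which $|\vd|^2_\om \td^n$ integrates to something tending to zero), and estimate \eqref{2} handles the far region (on which the weight itself gains a factor of $\delta$).

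More precisely, set
\[
A_\delta := \bigcup_j \{|s_j|^2 < \delta\} \subset \cX,
\]
and write, for each $t \in \Delta$,
\[
\int_{X_t} \Bigl(\sum_j\frac{\delta^2}{|s_j|^2+\delta^2}\Bigr) |\vd|^2_{\om}\, \td^n
= \int_{X_t\cap A_\delta} + \int_{X_t\setminus A_\delta}.
\]
On $X_t\cap A_\delta$, each summand $\delta^2/(|s_j|^2+\delta^2)$ is bounded by $1$, so that the integrand is pointwise $\le N |\vd|^2_\om\, \td^n$ where $N$ is the number of components of $B$; by \eqref{3}, the supremum over $t$ of this contribution tends to $0$ as $\delta\to 0$. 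On $X_t\setminus A_\delta$, one has $|s_j|^2 \ge \delta$ for every $j$, hence
\[
\sum_j \frac{\delta^2}{|s_j|^2+\delta^2} \le N\,\frac{\delta^2}{\delta+\delta^2} = \frac{N\delta}{1+\delta},
\]
and the corresponding contribution is bounded above by $\frac{N\delta}{1+\delta}\int_{X_t}|\vd|^2_\om\, \td^n$, which is at most $\frac{CN\delta}{1+\delta}$ by \eqref{2}, uniformly in $t\in\Delta$. Summing both contributions and letting $\delta\to 0$ yields the claim.

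I do not expect any real obstacle here; the lemma is a purely ``book-keeping'' consequence of \eqref{2} and \eqref{3}. The only minor point to be careful with is that the weight $\sum_j\delta^2/(|s_j|^2+\delta^2)$ is globally defined (the $|s_j|^2$ are the squared norms with respect to the smooth metric $e^{-\phi_j}$, as fixed in the approximation scheme), so the splitting along $A_\delta$ is well-defined on the total space $\cX$ and restricts consistently to each fiber $X_t$.
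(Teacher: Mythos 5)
Your proof is correct and is precisely the elementary argument the paper has in mind: the authors state estimates \eqref{2}--\eqref{3} immediately before Lemma~\ref{4}, remark that these two estimates ``imply the next statement,'' and then omit the proof as ``very elementary.'' Your split into the region $\{|s_j|^2<\delta\}$ (where the weight is $O(1)$ and \eqref{3} gives decay) and its complement (where the weight itself is $O(\delta)$ and \eqref{2} gives boundedness) is exactly that book-keeping argument, with the constants worked out correctly.
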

\medskip

\noindent The proof of Lemma \ref{4} is very elementary and we skip it. We present next the arguments for Proposition \ref{conv}.

\begin{proof}
Recall that in local coordinates,
 $$\vd=\frac{\d}{\d t} -\sum_{\alpha, \beta} \tau_{\delta}^{\bar \beta \alpha }\tau_{\delta, t \bar \beta}\frac{\d}{\d z_{\alpha}}$$
By \eqref{dt}, the family $(\vd|_{X_t})_{\delta>0}$ is relatively compact in the $\mathcal C^{\infty}_{\rm loc}(X_t\ssm \Supp(B))$ topology. Let $\delta_j$ a sequence converging to zero such that $(\vdj|_{X_t})_{j\ge 0}$ converges locally smoothly outside $\Supp(B)$ to a vector field $w$.

\noindent
Now, the geodesic curvature $c(\td)$ of $\td$ satisfies the following equation
\begin{equation}
\label{geodesic}
-\Delta_{\td}c (\td) = \|\dbar \vd\|^2 -\Theta_{ \delta}(K_{\cX/\Delta})(\vd,\bar v_{\delta})
\end{equation}
by Lemma \ref{Laplace}. In our setting (cf. \eqref{mp13} and the definition of $\td$)
the curvature term in \eqref{geodesic} becomes
\begin{equation}
\label{mp33}
  \frac{\partial^2\Cdt}{\partial t\partial \ol t}-
\sum_ja_j\delta^2\frac{\sqrt{-1}\la\partial s_j, \partial s_j\ra
  (\vd,\bar v_{\delta})}{(|s_j|^2+\delta^2)^2}+ \sum a_j \delta^2\frac{\Theta_j(\vd,\bar v_{\delta})}{|s_j|^2+\delta^2}
\end{equation}
where $\Theta_{j}$ above is the curvature of the hermitian line bundle $({\mathcal O}_X(B_j), e^{-\phi_j})$.

\noindent
Integrating \eqref{geodesic} against $\td^n$ yields
\begin{equation}
\label{dbar}
\lim_{\delta \to 0}\sup_{t\in \Delta} \left(\int_{X_t}\|\dbar \vd\|^2 \, \td^n+
\sum_j a_j\int_{X_t}\delta^2\frac{\sqrt{-1}\la\partial s_j, \partial s_j\ra
  (\vd,\bar v_{\delta})}{(|s_j|^2+\delta^2)^2}\, \td^n\right)
= \lim_{\delta \to 0}\sup_{t\in \Delta}\frac{\partial^2\Cdt}{\partial t\partial \ol t}.
\end{equation}
Indeed, thanks to Lemma~\ref{4} the third term in \eqref{mp33} vanishes as $\delta\to 0$.

\noindent
We show next that we have
\begin{equation}
  \label{mp40}
\lim_{\delta \to 0}\sup_{t\in \Delta}\frac{\partial^2\Cdt}{\partial t\partial \ol t}= 0
\end{equation}
and this will end the proof of Proposition~\ref{conv}. Recall that the expression of the function in
\eqref{mp40} is
\begin{equation}\label{mp41}
\Cdt= -\log\int_{X_t}\frac{\Omm}{\prod_j\left(|f_{j}|^2+\delta^2e^{\phi_j}\right)^{a_j} }
\end{equation}
and given that the norm of $\Omega$ is equal to one at each point of $\Delta$,
we have
\begin{equation}\label{mp42}
  \Cdt= -\log\Big(1-
  \int_{X_t}\frac{\prod_j\left(|f_{j}|^2+\delta^2e^{\phi_j}\right)^{a_j}- \prod_j|f_{j}|^{2a_j}}
  {\prod_j|f_{j}|^{2a_j}\prod_j\left(|f_{j}|^2+\delta^2e^{\phi_j}\right)^{a_j}}\Omm\Big).
\end{equation}
With the same notations as in \eqref{coniccontr}, the restriction of the function under the sum sign in \eqref{mp42} on a coordinate set $W_\alpha$ reads as
\begin{equation}\label{mp43}
F_{\alpha, \delta}(z,t):= \frac{\prod_j\left(|z_{j}|^2+\delta^2e^{\phi_j}\right)^{a_j}- \prod_j|z_{j}|^{2a_j}}
  {\prod_j|z_{j}|^{2a_j}\prod_j\left(|z_{j}|^2+\delta^2e^{\phi_j}\right)^{a_j}}
\end{equation}
and then the integral in \eqref{mp42} becomes
\begin{equation}\label{mp44bis}
\sum_\alpha\int_{W_\alpha\cap X_t}\theta_\alpha F_{\alpha, \delta}(z,t)e^{f_\alpha}\omega^n
\end{equation}
where $\theta_\alpha$ is a partition of unit and the $f_{\alpha}$ are given smooth functions. If $v$ is the horizontal lift of
$\displaystyle \frac{\partial}{\partial t}$ with respect to the reference metric $\omega$, then we have the usual formula
\begin{equation}\label{mp44}
\frac{\partial}{\partial t}\sum_\alpha\int_{X_t}\theta_\alpha F_{\alpha, \delta}(z,t)e^{f_\alpha}\omega^n= \sum_\alpha\int_{X_t}v\big(\theta_\alpha F_{\alpha, \delta}(z,t)e^{f_\alpha}\big)\omega^n.
\end{equation}
The formula \eqref{mp41} shows that $\displaystyle \frac{\partial F_{\alpha, \delta}}{\partial t}$
converges to zero as $\delta\to 0$ because only the weights $\phi_j$ depend on $t$ and the coefficients $a_j$ are strictly smaller than 1.
Indeed, we have
\begin{equation}\label{pm01}
\frac{\partial F_{\alpha, \delta}}{\partial t}= \sum_j \frac{\delta^2e^{\phi_j}\partial_t\phi_j }
{\left(|z_{j}|^2+\delta^2e^{\phi_j}\right)^{1+a_j}}
\frac{a_j}{\prod_{i\neq j}\left(|z_{i}|^2+\delta^2e^{\phi_i}\right)^{a_i}}
\end{equation}
and our claim follows since $\displaystyle \int_{(\CC, 0)}\frac{\delta^2}{(|z|^2+ \delta^2)^{1+a}}d\lambda(z)\to 0$ as $\delta\to 0$ for any $a< 1$.

\noindent As for terms
involving $\displaystyle \frac{\partial F_{\alpha, \delta}}{\partial z_i}$ we infer the same conclusion (i.e. they tend to zero) by using
integration by parts as we explain next. The corresponding terms in \eqref{mp44} have the following shape
\begin{equation}\label{pm02}
\int_{X_t}\frac{\partial F_{\alpha, \delta}}{\partial z_i}(z) \tau_{\alpha}(z)d\lambda(z)
\end{equation}
where $\tau_{\alpha}$ is a smooth function with compact support in $W_\alpha\cap X_t$. The integral
\eqref{pm02} is equal to
\begin{equation}\label{pm03}
-\int_{X_t}\frac{\partial \tau_{\alpha}}{\partial z_i}(z) F_{\alpha, \delta}(z)d\lambda(z)
\end{equation}
and this tends to zero by dominated convergence.

\noindent The same type of arguments apply for the second order derivatives of $\Cdt$; the claim \eqref{mp40} follows.
\medskip

\noindent As $\vdj\to w$ in the $\mathcal C^{\infty}_{\rm loc}(X_t\ssm \Supp(B))$ topology when $j \to +\infty$, it follows from the identity \eqref{dbar} above that $w_{|X_t\ssm \Supp(B)}$ is holomorphic.
\end{proof}
\medskip

\noindent The next proposition is equally very important in the analysis of the uniformity properties of $(v_\delta)_{\delta> 0}$.

\begin{prop}
\label{conv2}
Let $t\in \Delta$ be fixed. Then the identity
\begin{equation}\label{mp50}
  \lim_{\delta \to 0}\left(c(\td)-\int_{X_t}c(\td) \td^n\right)=0
\end{equation}
holds on $X_t\ssm \Supp(B)$.
\end{prop}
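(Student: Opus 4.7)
The plan is to combine Proposition~\ref{conv} with a Bochner-type identity, then identify the resulting constant with $\lim \bar c_\delta$ via continuity of Monge--Amp\`ere masses for currents with bounded potentials.

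Given any sequence $\delta_j \to 0$, Proposition~\ref{conv} yields a subsequence (not relabeled) and a holomorphic vector field $w$ on $X_t \ssm \Supp(B)$ such that $v_{\delta_j} \to w$ in $\mathcal C^\infty_{\rm loc}$. Since the fiberwise Monge--Amp\`ere equation \eqref{cases} has smooth data on $X_t \ssm \Supp(B)$ with uniform ellipticity in $\delta$, interior $\mathcal C^k$ estimates yield $\tau_{\delta_j} \to \rho_t$ in $\mathcal C^\infty_{\rm loc}$ there, and hence $c(\tau_{\delta_j}) = |v_{\delta_j}|^2_{\tau_{\delta_j}} \to |w|^2_{\rho_t}$ locally smoothly. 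Taking the local smooth limit in \eqref{mp25}, the left-hand side tends to $-\Delta''_{\rho_t}|w|^2_{\rho_t}$; on the right-hand side $\|\dbar v_{\delta_j}\|^2 \to \|\dbar w\|^2 = 0$, and by the decomposition \eqref{mp33} the curvature term consists either of the fiber-constant $\partial_t\partial_{\bar t}\Cdt$ (which tends to $0$ by \eqref{mp40}) or of contributions carrying a factor $\delta^2/(|s_j|^2+\delta^2)^k$, which vanish pointwise on $X_t\ssm\Supp(B)$. Therefore $|w|^2_{\rho_t}$ is $\rho_t$-harmonic on $X_t \ssm \Supp(B)$.

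Since $\Ric \rho_t = [B_t]$ vanishes on $X_t\ssm\Supp(B)$, the Bochner identity for the holomorphic vector field $w$ on this Ricci-flat open set gives $\Delta''_{\rho_t}|w|^2_{\rho_t} = |\nabla w|^2_{\rho_t}$; combined with the harmonicity just obtained, it forces $\nabla w \equiv 0$, so $w$ is parallel and $|w|^2_{\rho_t}$ is constant, say equal to some $A$, on the connected open set $X_t\ssm\Supp(B)$. To conclude that $\bar c_\delta \to A$ along the whole family (and not only along a subsequence), I invoke Bedford--Taylor continuity: the potentials $u_\delta$ are uniformly bounded by \eqref{supsupsup} and converge in $L^1_{\rm loc}$ to the potential of $\rho$ by Lemma~\ref{conv0}, so $\tau_\delta^{n+1}$ converges weakly to $\rho^{n+1}$ on $\cX$; pushing forward by $p$, one obtains $\bar c_\delta \cdot idt\wedge d\bar t = p_*(\tau_\delta^{n+1}) \to p_*(\rho^{n+1}) = A\cdot idt\wedge d\bar t$, the last equality using $|w|^2_{\rho_t}\equiv A$ on $X_t\ssm\Supp(B)$ and $\int_{X_t}\rho_t^n = 1$. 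Since every subsequence of $\delta$ admits a further subsequence along which both $c(\tau_\delta)\to A$ locally smoothly on $X_t\ssm\Supp(B)$ and $\bar c_\delta \to A$, the whole family $c(\tau_\delta) - \bar c_\delta$ converges to zero on $X_t\ssm\Supp(B)$.

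The main difficulty is the rigorous justification of the locally smooth convergences used when passing to the limit in \eqref{mp25}. The convergence $v_{\delta_j}\to w$ is delivered by Proposition~\ref{conv}, but the locally smooth convergence $\tau_{\delta_j}\to\rho_t$ on $X_t \ssm \Supp(B)$ requires interior $\mathcal C^k$ estimates uniform in $\delta$ for the fiberwise Monge--Amp\`ere equation \eqref{cases}; while standard, such estimates are delicate in the conic setting and rest on the analysis of \cite{GP,Gue16} already invoked for Proposition~\ref{conv}.
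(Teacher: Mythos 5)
Your argument hinges on the claim that $c(\tau_{\delta_j})$ converges locally smoothly on $X_t\setminus\Supp(B)$, but this does not follow from what is available at this stage and is in fact the crux of the difficulty. In local coordinates one has $c(\tau_\delta)=\tau_{\delta,t\bar t}-\tau_\delta^{\bar\beta\alpha}\tau_{\delta,t\bar\beta}\tau_{\delta,\alpha\bar t}$, and $\tau_{\delta,t\bar t}=\omega_{t\bar t}+\partial_t\partial_{\bar t}u_\delta$. Proposition~\ref{conv} (convergence of $v_\delta|_{X_t}$) together with fiberwise interior elliptic estimates (convergence of $\tau_\delta|_{X_t}$) controls the \emph{second} term, i.e.\ the mixed derivatives $\tau_{\delta,t\bar\beta}$, but neither controls $\partial_t\partial_{\bar t}u_\delta$. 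Indeed, the uniform estimate on $\partial_t\partial_{\bar t}u_\delta$ on $\mathcal X\setminus\Supp(B)$ is precisely what Corollary~\ref{vhol} derives \emph{from} Propositions~\ref{conv2} and~\ref{bounded}. Your proof is therefore circular: it assumes the locally uniform boundedness and convergence of the geodesic curvature in order to prove a statement that the paper uses to establish those very facts. The ``main difficulty'' you flag at the end (fiberwise $C^k$ estimates for the conic Monge--Amp\`ere equation) is genuinely the easier half; the missing ingredient is the $t\bar t$-component, and acknowledging this changes the whole logical structure. Relatedly, the identification $c(\tau_{\delta_j})\to|w|^2_{\rho_t}$ is not correct even formally: the geodesic curvature is $\rho_{t\bar t}-|w^{\mathrm{fiber}}|^2_{\rho_t}$, not a Riemannian norm of $w$, so the Bochner formula for holomorphic vector fields on the Ricci-flat fiber does not apply to it as written.

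The paper's proof sidesteps exactly this obstacle. Rather than proving convergence of $c(\tau_\delta)$ and then showing the limit is constant, it writes the oscillation $c(\tau_\delta)-\bar c_\delta$ via the Green's function of $(X_t,\tau_\delta|_{X_t})$, paired against the \emph{right-hand side} of the elliptic equation \eqref{mp25}. That right-hand side involves only $\|\bar\partial v_\delta\|^2$, the regularized $B$-terms, and $\partial_t\partial_{\bar t}\Cdt$ --- quantities controlled by \eqref{dt}, Lemma~\ref{4}, and \eqref{mp40} --- and never the problematic $\partial_t\partial_{\bar t}u_\delta$. The uniform lower bound on the Green's function and its upper bound in terms of $d_{\tau_\delta}(x,\cdot)^{2-2n}$ (valid because $\Ric\tau_\delta|_{X_t}$ is bounded below uniformly in $\delta$) then let one estimate the integral by splitting near/far from $x$. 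This is why the statement is phrased as vanishing of the oscillation rather than convergence to a constant: the latter requires control on $\partial_t\partial_{\bar t}u_\delta$, which only becomes available downstream. If you wish to keep a Bochner-flavored argument, you would first need an a priori, $\delta$-uniform bound on $\partial_t\partial_{\bar t}u_\delta$ away from $\Supp(B)$ obtained by some means independent of Proposition~\ref{conv2}; absent that, the Green's function route is the right one here.
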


\begin{proof}
Let $G_{\delta}:X_t\times X_t \to \mathbb R$ be the Green function of $(X_t, \td)$. Let $x\in X_t\ssm \Supp(B)$; by definition, one has
\begin{equation}
\label{green}
c(\td)(x)-\int_{X_t}c(\td) \td^n=\int_{X_t} -\Delta_{\td}c(\td) \cdotp G_{\delta}(x,\cdotp) \,\td^n
\end{equation}
Clearly, $\mathrm{Vol}(X_t, \td)=\int_{X_t} \td^n = \int_{X_t} \om^n=1$ is independent of $\delta$. Moreover, by \eqref{coniccontr}, there exists a constant $C_1>0$ independent of $\delta$ such that $\mathrm{diam}(X_t,\td) \le C_2$. Therefore, it follows from \cite[A.2]{Siu87} that
\begin{equation}
\label{lower}
G(x,y) \ge -C_2
\end{equation}
for some $C_2>0$ independent of $\delta$. Now recall that $G_{\delta}(x,y)=\int_{0}^{+\infty} G_{\delta}(x,y,s) ds$ where $G_{\delta}(x,y,s)$ satisfies
$$G_{\delta}(x,y,s) \le \begin{cases}
C_3 s^{-n}e^{-d_{\td}(x,y)/5s} & \mbox{if } 0<s<1\\
C_4 s^{-n} & \mbox{for any } 0<s<+\infty
\end{cases}$$
where $d_{\td}$ is the geodesic distance induced by $\td$ on $X_t$. This follows respectively by \cite[Thm.~16]{Davies} and \cite[p.139]{Siu87} \--- recall that the Ricci curvature of $\td$ is uniformly bounded below thanks to \eqref{coniccontr}.
Integrating the above inequalities, one gets
\begin{equation}
\label{upper}
G(x,y) \le C_3\,  d_{\td}(x,y)^{2-2n}
\end{equation}
for some uniform $C_3>0$. Let $I_{\delta}(x):= c(\td)(x)-\int_{X_t}c(\td) \td^n$, and let $C_4>0$ be large enough so that $\pm \Theta_{\delta} \le C_4 \om$. One has successively:
\begin{eqnarray*}
|I_{\delta}(x)| &=& \left|\int_{X_t} -\Delta_{\td}c(\td) \cdotp (G_{\delta}(x,\cdotp)+C_2) \,\td^n \right|\\
&\le &\int_{X_t}\left(\|\dbar \vd\|^2 +C_4(\sum_j\frac{\delta^2}{|s_j|^2+\delta^2})|\vd|_{\om}^2\right) \cdotp (G_{\delta}(x,\cdotp)+C_2)\,\td^n\\
&+ & \int_{X_t}\left(\sum_ja_j\delta^2\frac{\sqrt{-1}\la\partial s_j, \partial s_j\ra
  (\vd,\bar v_{\delta})}{(|s_j|^2+\delta^2)^2}\right) \cdotp (G_{\delta}(x,\cdotp)+C_2)\,\td^n\\
&\le &C_5 \int_{X_t}\left(\|\dbar \vd\|^2 +(\sum_j\frac{\delta^2}{|s_j|^2+\delta^2})|\vd|_{\om}^2\right) \cdotp d_{\td}(x,\cdotp)^{2-2n}\,\td^n\\
&+ & \int_{X_t}\left(\sum_ja_j\delta^2\frac{\sqrt{-1}\la\partial s_j, \partial s_j\ra
  (\vd,\bar v_{\delta})}{(|s_j|^2+\delta^2)^2}\right) \cdotp d_{\td}(x,\cdotp)^{2-2n}\,\td^n
\end{eqnarray*}
We claim that the right hand side converges to $0$ when $\delta\to 0$, uniformly on $x$ belonging to a fixed compact subset of $ X_t\ssm \Supp(B)$. To see this, it is enough to check that out of any sequence $\delta_j\to0$, one has $\lim_{j\to +\infty} I_{\delta_j}(x)=0$ uniformly on $x$, up to extracting a subsequence. Thanks to Lemma~\ref{conv}, one can assume that $v_{\delta_j}$ converges locally smoothly to a holomorphic vector field $w$ on $ X_t\ssm \Supp(B)$. Let us pick $\ep>0$.

By the estimates and observations above, one can find a small neigborhood $U_x \Subset X_t \ssm \Supp(B)$ and a constant $C=C(x)>0$ such that:
\begin{enumerate}
\item[(i)] $|\vd|^2_{\om} \le C$, $\|\dbar v_{\delta_j}\|^2\le \ep$, and $|s_j|^2 \ge C^{-1}$ hold on $U_x$ for any index $j$;
\item[(ii)] $\int_{U_x} d_{\td}(z,\cdotp)^{2-2n} \td^n \le C$;
\item[(iii)] $d_{\td}(z,w)^{2-2n} \le C$ for any $w\notin U_x$.
\end{enumerate}
The rest of the proof is easy: we split the integral into two pieces on $U_x$ and its complement.

\noindent $\bullet$ On the complement of $U_x$ we use the item (iii) so that we can replace the function $d_{\td}(x,\cdotp)^{2-2n}$ in the inequalities above by a constant independent of $\delta$. The proof of Proposition \ref{conv} shows that the integral of the remaining terms tends to 0 and $\delta\to 0$.

\noindent $\bullet$ On the set $U_x$ we are `far' from the support of $B$. Combined with the items (i) and (ii) above,
this finishes the proof of Proposition~\ref{conv2}.
\end{proof}
\medskip

\noindent In fact, Proposition~\ref{conv2} shows that the limit \eqref{mp50}
is uniform on compact sets contained in the complement of the divisor $B$.
We intend to couple this with the elliptic equation satisfied by $c(\td)$
in order to obtain bounds for the derivatives of this function in the fiber directions. To this end, we need the following statement.

\begin{prop}
\label{bounded}
There exists a constant $C>0$ independent of $\delta>0$ such that
$$\left|\int_{X_t}c(\td) \td^n\right| \le C $$
\end{prop}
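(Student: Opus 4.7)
The lower bound $I(t) := \int_{X_t} c(\td)\td^n \ge 0$ is immediate from $c(\td) = \|\vd\|^2_\td \ge 0$, so I focus on the uniform upper bound. My plan is to combine a pointwise inequality for $c(\td)$ with integration by parts against the $L^2$-normalization \eqref{cases} of $\ud$.

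First I would work in a local trivialization $p^{-1}(\Delta) \simeq X_{t_0} \times \Delta$ so that the coordinate field $\partial_t$ becomes a globally defined smooth lift of the base coordinate. Since $\vd$ is the horizontal lift of $\partial_t$ with respect to $\td$, it minimizes the $\td$-squared norm among all lifts of $\partial_t$; evaluating at the trivial lift then yields the pointwise bound
$$c(\td) = \|\vd\|^2_\td \le \td(\partial_t, \ol{\partial_t}) = \om_{t\ol t} + (\ud)_{t\ol t}.$$
Using the fiberwise Monge-Amp\`ere equation $\td^n|_{X_t} = e^{\Cdt}\mu_\delta$, with $\mu_\delta = (\Omega \wedge \ol\Omega)^{1/m}/\prod_j(|f_j|^2 + \delta^2 e^{\phi_j})^{a_j}$ and $e^{\Cdt}$ uniformly bounded in $\delta$, the problem reduces to bounding $\int_{X_t}\om_{t\ol t}\mu_\delta$ (trivially uniform, since $\om_{t\ol t}$ is smooth and $\int\mu_\delta \le C$ because $(X_y, B_y)$ is klt) and $\int_{X_t}(\ud)_{t\ol t}\mu_\delta$.

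For the latter, my plan is to exploit the normalization $\int_{X_t}\ud\,\mu_\delta = 0$ from \eqref{cases}. In the trivialization this is an identity in $t$, so differentiating twice in $t$ and then $\ol t$ and passing the derivatives through the fiber integral should give
$$\int_{X_t}(\ud)_{t\ol t}\mu_\delta = -\int\partial_t\ud \cdot \partial_{\ol t}\mu_\delta - \int\partial_{\ol t}\ud \cdot \partial_t\mu_\delta - \int\ud \cdot \partial_t\partial_{\ol t}\mu_\delta.$$
Each right-hand-side term will then be bounded uniformly in $\delta$ using three ingredients: the uniform $L^\infty$ estimates $|\ud|, |\partial_t\ud| \le C$ from \eqref{supsupsup}--\eqref{dt}; the elementary inequality $\delta^2/(|f_j|^2 + \delta^2 e^{\phi_j}) \le e^{-\phi_j}$, itself bounded since $\phi_j$ is smooth; and a direct computation of the logarithmic derivatives of $\mu_\delta$, which combined with the previous bound yields $|\partial_t\mu_\delta|, |\partial_{\ol t}\mu_\delta|, |\partial_t\partial_{\ol t}\mu_\delta| \le C\mu_\delta$. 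Integrating against $\mu_\delta$ of bounded mass then concludes.

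The hard part, I expect, is the absence of any pointwise control on $(\ud)_{t\ol t}$: the estimates from \cite{Gue16} quoted in \eqref{dt} bound only fiber derivatives of $\partial_t\ud$, not its $\ol t$-derivatives, so a naive pointwise argument has no chance. The normalization \eqref{cases} plays a crucial role precisely because it trades this missing pointwise control for a global integral identity on $X_t$ which, after integration by parts, transfers the problematic transverse derivative onto $\mu_\delta$, where the bounds survive uniformly even at the conic scale $\delta$.
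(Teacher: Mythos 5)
Your upper bound strategy --- the pointwise inequality $c(\td)\le\td(w,\ol w)$ for any lift $w$ of $\partial_t$, followed by integration by parts on the normalization of $\ud$ --- is in the right spirit, but there are two genuine gaps. First, you claim $\int_{X_t}c(\td)\td^n\ge 0$ because ``$c(\td)=\|\vd\|^2_{\td}\ge 0$''; but $\td=\om+\ddc\ud$ is only positive \emph{along the fibers}: on the total space $\cX$ it may be indefinite, so $\langle\cdot,\cdot\rangle_{\td}$ is not a genuine norm. In fact $c(\td)=\td_{t\bar t}-\td^{\bar\beta\alpha}\td_{t\bar\beta}\td_{\alpha\bar t}$ is the Schur complement whose sign is that of $\td^{n+1}$, and the sign of the top power of the relative Ricci-flat form is precisely the delicate issue of this paper (cf.\ Theorem~\ref{thma}); the lower bound therefore needs its own argument and is not free.

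Second, the lift must be tangent to $B$. In a generic local trivialization $\partial_t f_j$ does not vanish along $B_j$, and the $t$-logarithmic derivative of the density $\Omm\big/\prod_j(|f_j|^2+\delta^2e^{\phi_j})^{a_j}$ contains the term $-a_j\,\bar f_j\,\partial_t f_j\big/(|f_j|^2+\delta^2 e^{\phi_j})$, which your elementary inequality does not touch. Near $\{|f_j|\sim\delta\}$ this term is $\sim\delta^{-1}$, and its integral against the density is $\sim\delta^{1-2a_j}$, unbounded once $a_j>1/2$; so the bound you want on the logarithmic derivative of the density simply fails. To make the computation run one must replace the coordinate lift by a smooth $(1,0)$-lift $v$ of $\partial/\partial t$ with $v(f_j)\in(f_j)$, i.e.\ a section of the logarithmic tangent sheaf along $B$, as constructed in \S2.3.1 --- and then $\td(v,\ol v)$ acquires extra fiber-direction Hessian terms of $\ud$ beyond $\om_{t\bar t}+(\ud)_{t\bar t}$, which must also be controlled. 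For comparison, the paper's proof differentiates the same normalization but with the $\omega_\delta$-horizontal lift $V_\delta$ and identifies $V_\delta(\ol V_\delta(\ud))$ with $c(\td)$ up to terms already bounded in \cite{Gue16}, yielding the two-sided estimate in a single pass without routing through a one-sided pointwise inequality.
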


\begin{proof} This statement can be seen as a by-product of the
  considerations in the article \cite[(5.3) \& Prop.~5.4]{Gue16}. Therefore we will
  content ourselves to highlight the main steps.

\noindent
  To start with, we recall that the normalization of $u_\delta$ is as follows
 \begin{equation}\label{mp51}
\int_{X_t}\ud \frac{\Omm}{\prod_j\left(|f_{j}|^2+\delta^2e^{\phi_j}\right)^{a_j}} =0
 \end{equation}
 and this can be re-written as
 \begin{equation}\label{mp52}
\int_{X_t}\ud e^{F_\delta}\omega_\delta^n =0
 \end{equation}
where $\omega_\delta$ is a metric with conic singularities on $X$, whose multiplicities along the components of $B$ are $1>b_j\geq \max(a_j, 1/2)$ (notations as in \eqref{coniccontr}). Note that $F_\delta$ in \eqref{mp52} has an explicit expression, being the log of the ratio $\displaystyle \frac{\td^n}{\omega_\delta^n}
$.

Let $V_\delta$ be the horizontal lift of $\displaystyle \frac{\partial}{\partial t}$ with respect to $\omega_\delta$. By applying the
$\displaystyle \frac{\partial^2}{\partial t\partial \ol t}$ operator in \eqref{mp52} we obtain
 \begin{eqnarray}\label{bigmess} \nonumber
   \int_{X_t}V_\delta \left(\ol V_\delta(\ud)\right) e^{F_\delta}\omega_\delta^n & = &
   -\int_{X_t}V_\delta(\ud) \ol V_\delta(F_\delta)e^{F_\delta}\omega_\delta^n-
   \int_{X_t}\ol V_\delta(\ud) V_\delta(F_\delta)e^{F_\delta}\omega_\delta^n\\
 & - & \int_{X_t}\ud V_\delta \left(\ol V_\delta(F_\delta)\right)e^{F_\delta}\omega_\delta^n- \int_{X_t}\ud \left|V_\delta(F_\delta)\right|^2e^{F_\delta}\omega_\delta^n
\end{eqnarray}

 \noindent Now the point is that, up to terms for which we have a uniform estimate already, the function $\displaystyle V_\delta \left(\ol V_\delta(\ud)\right)$ is ``the same'' as $c(\tau_\delta)$. Hence the absolute value of the
 lhs of \eqref{bigmess} is equivalent to $\displaystyle \left|\int_{X_t}c(\td) \td^n\right|$.

 \noindent
 The terms on the rhs of \eqref{bigmess} are
 uniformly bounded, as it is proved in the reference indicated at the beginning of the proof.
\end{proof}
\medskip

\noindent We can now prove that the vector field $v_\rho$ is holomorphic when restricted to the fibers of $p$.
\begin{coro}
\label{vhol}
Let $t\in \Delta$ be fixed. The family $(\vd|_{X_t})_{\delta >0}$ converges locally smoothly outside $\Supp(B)$ to the lift $v$ of $\frac{\d}{\d t}$ with respect to $\rho|_{X°\smallsetminus \Supp(B)}$. In particular, $v|_{X_t \ssm \Supp(B)}$ is holomorphic.
\end{coro}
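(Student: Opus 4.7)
The plan is to identify the subsequential limit $w$ from Proposition~\ref{conv} with the horizontal lift $v$, and then invoke uniqueness of the limit to conclude that the entire family $(\vd|_{X_t})_{\delta>0}$ converges to $v|_{X_t \ssm \Supp(B)}$, whence $v|_{X_t \ssm \Supp(B)}$ inherits holomorphicity from $w$.

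First I would upgrade the $L^1_{\rm loc}$ convergence $\ud \to \vp$ from Lemma~\ref{conv0} to smooth convergence on relatively compact subsets of $\cX \ssm \Supp(B)$. On any such subset $K$, the right-hand side of the Monge-Ampère equation \eqref{cases} is uniformly bounded between two smooth positive volume forms and converges smoothly as $\delta \to 0$, while \eqref{supsupsup} gives a uniform $L^\infty$ bound on $\ud$. Yau's $\mathcal{C}^2$ estimate applied fiberwise (legitimate on $K$ since the data is uniformly non-degenerate there), followed by Evans-Krylov and Schauder bootstrapping, yields uniform $\mathcal{C}^k_{\rm loc}$ estimates for $\ud|_{X_t}$ in the fiber directions. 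The transverse estimates \eqref{dt} from \cite{Gue16}, combined with differentiation of the Monge-Ampère equation \eqref{cases} in the base direction, then extend these to uniform $\mathcal{C}^k_{\rm loc}$ estimates for $\ud$ in all directions on $K$. Since $\vp$ is the unique $L^1_{\rm loc}$ limit dictated by Lemma~\ref{conv0}, this upgrades weak convergence to smooth convergence $\td \to \rho$ on compact subsets of $\cX \ssm \Supp(B)$.

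Once this smooth convergence is secured, the local formula \eqref{hlift} for the horizontal lift immediately gives smooth convergence $\vd \to v$ on compact subsets of $\cX \ssm \Supp(B)$, since $\vd$ is obtained from $\td$ and its first order $t$-derivatives by rational expressions in the entries of $\td$ and its inverse, and $\td$ is uniformly comparable to $\om$ on such subsets by \eqref{coniccontr}. In particular, every subsequential limit $w$ produced by Proposition~\ref{conv} must coincide with $v|_{X_t \ssm \Supp(B)}$. Uniqueness of the limit forces the full family $(\vd|_{X_t})_{\delta>0}$ to converge smoothly locally on $X_t \ssm \Supp(B)$ to $v|_{X_t \ssm \Supp(B)}$, and the holomorphicity of $v|_{X_t \ssm \Supp(B)}$ follows from that of $w$.

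The main technical point will be the transverse smooth convergence of $\td$ to $\rho$ on $\cX \ssm \Supp(B)$. Fiberwise smoothness is classical and follows from standard Monge-Ampère regularity, but the transverse $\mathcal{C}^k$ bounds, uniform in $\delta$, crucially rely on item (b) above and the estimates of \cite{Gue16} recalled there. Once these are in place, however, the identification $w = v$ is automatic from \eqref{hlift}, and the rest of the argument is formal.
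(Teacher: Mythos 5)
Your overall outline matches the paper's strategy (obtain transverse uniform regularity for $u_\delta$ away from $\Supp(B)$, deduce smooth convergence of $v_\delta$ to the horizontal lift $v_\rho$, and then invoke Proposition~\ref{conv} and uniqueness of the subsequential limit), but you have buried the genuinely difficult step inside a sentence that does not do the necessary work. The problem is your claim that ``The transverse estimates \eqref{dt} from \cite{Gue16}, combined with differentiation of the Monge-Ampère equation \eqref{cases} in the base direction, then extend these to uniform $\mathcal{C}^k_{\rm loc}$ estimates for $\ud$ in all directions.'' Estimate \eqref{dt} controls the fiberwise $\mathcal{C}^k$ norm of the \emph{first} transverse derivative $\partial_t u_\delta$; nothing there gives you the mixed second derivative $\partial^2_{t\bar t} u_\delta$, and differentiating the Monge-Ampère equation once more in $\bar t$ produces only a fiberwise elliptic equation for $\partial^2_{t\bar t} u_\delta$ on the compact fiber $X_t$, from which no local $L^\infty$ bound follows without an a priori integral bound or boundary control, neither of which you have. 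Concretely, $\partial^2_{t\bar t} u_\delta$ (equivalently, the geodesic curvature $c(\td)$, up to terms controlled by \eqref{dt} and \eqref{coniccontr}) is exactly the quantity that a naive bootstrap cannot reach, and its uniform boundedness away from $\Supp(B)$ is the heart of the matter.

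The paper closes precisely this gap with two nontrivial inputs that your proposal silently assumes: Proposition~\ref{bounded} provides the uniform bound on the mean value $\int_{X_t} c(\td)\,\td^n$ via the delicate computation \eqref{bigmess} (differentiating the normalization \eqref{mp51} twice in $t$), and Proposition~\ref{conv2} controls the oscillation $c(\td)-\int_{X_t}c(\td)\,\td^n$ locally via the Green function representation \eqref{green}, which in turn requires the uniform lower bound \eqref{lower} on the Green functions of $(X_t,\td)$, the heat kernel bound \eqref{upper}, Lemma~\ref{4}, and the convergence \eqref{mp40} of $\partial^2_{t\bar t}\Cdt$ to zero. Only after combining these does the paper obtain the local $L^\infty$ bound on $c(\td)$, hence on $\partial^2_{t\bar t}u_\delta$, and from there $C^{1,\alpha}_{\rm loc}$ bounds on $u_\delta$, Arzel\`a--Ascoli, and finally the $\mathcal{C}^\infty_{\rm loc}$ convergence of $v_\delta|_{X_t}$ using \eqref{dt} to upgrade. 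Your ``formal'' conclusion is therefore correct only because all of this machinery is already available in the paper; as written, the proposal mistakes the crux of the argument for a routine bootstrap.
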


\begin{proof}
  Combining Propositions~\ref{conv2} and \ref{bounded}, one sees that $c(\td)$ is locally uniformly bounded on $X_t \ssm \Supp(B)$. Given the elliptic equation satisfied by $c(\td)$, it implies local bound at any order (in the fiber directions).

Let $W\subset X$ be a coordinate open subset of $\cX$ such that $W\cap \Supp(B)= \emptyset$.  In local coordinates, this implies that
\begin{equation}\label{mp53}
\frac{\partial^2 u_\delta}{\partial t\partial \ol t}
\end{equation}
is bounded on $W$ by a constant independent of $\delta$. Since we already dispose of this type of bounds for any other mixed second order derivatives of $u_\delta$, we infer that we have
\begin{equation}\label{mp54}
  \left|\Delta^{\prime\prime}u_\delta\right|\leq C_W
\end{equation}
where $\Delta^{\prime\prime}$ is the Laplace operator corresponding to the flat metric on $W$ and $C_W$ is a constant independent of $\delta$.

  This implies that the global function $\ud$ admits $C^{1,\alpha}$ bounds locally on $\cX\ssm \Supp(B)$ for any $\alpha<1$. By Arzela-Ascoli theorem and Lemma~\ref{conv0}, it implies that $u_{\delta}$ converges to $\varphi$ in $C^{1,\alpha}_{\rm loc}(\cX\ssm \Supp(B))$. In particular, $\vp$ is differentiable in the $t$ variable outside $\Supp(B)$, and on this locus, $\d_t \vp_t = \lim \d_t u_{\delta}$ in the $\mathcal C^{\alpha}_{\rm loc}$ topology. Now, \eqref{dt} shows that the convergence actually takes places in $\mathcal C^{\infty}_{\rm loc}(X_t \ssm \Supp(B))$. In particular, outside $\Supp(B)$, $v_{\rho}|_{X_t}$ is the smooth limit of $\vd|_{X_t}$ when $\delta \to 0$. Corollary~\ref{vhol} is now a consequence of Proposition~\ref{conv}.
\end{proof}

\begin{coro}
\label{const}
Let $t\in \Delta$ be fixed. Then $dc(\td)|_{X_t}$ converges locally uniformly to $0$ on the compact subsets of $X_t \ssm \Supp(B)$.
\end{coro}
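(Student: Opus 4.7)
The plan is to combine the two previous propositions with interior elliptic regularity applied to the PDE \eqref{geodesic}. Fix a compact subset $K \Subset X_t \ssm \Supp(B)$ and a slightly larger compact $K' \Subset X_t \ssm \Supp(B)$ containing a neighborhood of $K$.

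First, I would collect what is already known on $K'$. By \eqref{coniccontr}, the metrics $\td|_{X_t}$ are uniformly equivalent on $K'$ to a fixed smooth reference metric (since $K'$ stays at positive distance from $\Supp(B)$, so the conic weights are harmless there), so in particular the Laplacian $\Delta_{\td}$ is uniformly elliptic on $K'$. By Corollary~\ref{vhol}, $\vd|_{X_t}\to v$ smoothly on $K'$, so $\|\dbar\vd\|^2\to0$ uniformly on $K'$. Moreover, since $K'$ avoids $\Supp(B)$, the curvature term $\Theta_\delta(K_{\cX/\Delta})(\vd,\ol\vd)$ is uniformly bounded on $K'$ (the singular contributions coming from the $\delta^2/(|s_j|^2+\delta^2)$ factors are uniformly small there, and the smooth parts converge). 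Consequently, the right-hand side of the elliptic equation
$$-\Delta_{\td}\,c(\td) \;=\; \|\dbar\vd\|^2 - \Theta_\delta(K_{\cX/\Delta})(\vd,\ol\vd)$$
is uniformly bounded on $K'$, independently of $\delta$.

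Next, I would bound $c(\td)$ itself in $L^\infty(K')$ uniformly in $\delta$. This is immediate from Proposition~\ref{conv2} (which gives $c(\td)-\int_{X_t}c(\td)\,\td^n \to 0$ uniformly on $K'$) together with Proposition~\ref{bounded} (which provides a uniform bound on the mean $\int_{X_t}c(\td)\,\td^n$). Applying interior $L^p$ elliptic estimates to the equation above on $K'\supset K$, with uniformly elliptic operator and uniformly bounded right-hand side and $L^\infty$ datum, yields a uniform $W^{2,p}(K)$ bound on $c(\td)$, and hence a uniform $C^{1,\alpha}(K)$ bound for any $\alpha\in(0,1)$ by Sobolev embedding.

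It then remains to identify limits. Consider any sequence $\delta_j\to 0$. By the uniform $C^{1,\alpha}$ bound and Arzelà--Ascoli, a subsequence converges in $C^1(K)$ to some function $c_\infty$. By Propositions~\ref{conv2} and~\ref{bounded}, $c(\tau_{\delta_j}) - \int_{X_t}c(\tau_{\delta_j})\,\tau_{\delta_j}^n\to 0$ uniformly on $K$, while the means are bounded; extracting once more we may assume the means converge to some constant $a$, so $c_\infty\equiv a$ is constant on $K$. In particular $dc_\infty=0$, and $C^1$-convergence gives $dc(\tau_{\delta_j})|_{X_t}\to 0$ uniformly on $K$. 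Since this holds along every subsequence of $\delta\to 0$, the full family $dc(\td)|_{X_t}$ converges uniformly to $0$ on $K$, which is the claim.

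The only delicate point I anticipate is the \emph{uniformity} in $\delta$ of the interior elliptic estimate: it requires that the coefficients of $\Delta_{\td}$ on $K'$ are uniformly bounded with uniform ellipticity constants. This is not automatic from \eqref{coniccontr} alone, but follows because on the fixed compact set $K'\subset X_t\ssm\Supp(B)$ one has $\min_j|s_j|^2\ge c_0>0$, so \eqref{coniccontr} degenerates into genuine two-sided bounds by a constant times the reference smooth metric, and the higher-order coefficients are controlled via \eqref{dt}.
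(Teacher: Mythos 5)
Your argument is correct and follows essentially the same route as the paper: uniform $L^\infty$ control on $c(\td)$ away from $\Supp(B)$ via Propositions~\ref{conv2} and~\ref{bounded}, elevated to higher-order interior bounds through the elliptic equation~\eqref{mp25} (whose right-hand side is uniformly controlled on compacts avoiding $\Supp(B)$ thanks to Corollary~\ref{vhol} and~\eqref{coniccontr}), followed by compactness and identification of the limit as a constant using Proposition~\ref{conv2} again. The paper phrases this more tersely (citing the proof of Corollary~\ref{vhol} for the a priori bounds and going directly to $\mathcal C^k_{\rm loc}$ compactness), but the content is the same; your explicit care about uniform ellipticity on compacts away from $\Supp(B)$ is a worthwhile remark.
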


\begin{proof}
  Let $K\Subset X_t \ssm \Supp(B)$. By the proof of Corollary~\ref{vhol} and given \eqref{mp25}, $c(\td)|_K$ is bounded in $L^{\infty}$ norm hence in any $\mathcal C^k_{\rm loc}$ norm on $K$. This implies that family $dc(\td)|_{K}$ is relatively compact in the smooth topology, and the claim follows from Proposition~\ref{conv2}.
\end{proof}
\medskip

\noindent

\begin{lemm}
The vector field $v$ on $\cX\ssm \Supp(B)$ is holomorphic and extends across $\Supp(B)$.
\end{lemm}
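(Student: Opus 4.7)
The argument splits naturally into two parts: first establishing holomorphicity of $v$ on $\cX\ssm\Supp(B)$, then extending across the divisor.

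For the holomorphicity on $\cX\ssm\Supp(B)$, Corollary~\ref{vhol} already gives fiberwise holomorphicity of $v$, so what remains is to show that $v$ is also holomorphic in the transverse direction, i.e., that $\partial_{\bar t}$ annihilates the coefficients $\rho^{\bar\beta\alpha}\rho_{t\bar\beta}$. The plan is to appeal to Berndtsson's curvature formula for the direct image $\cF_m$ equipped with the Narasimhan-Simha metric $h$ of \eqref{metrcon}. The Hermitian flatness of $\cF_m$ supplies, over a small open set $U\subset \Delta$, a section $s\in H^0(X_U, m(K_{\cX/\Delta}+B))$ with $\|s\|_h\equiv 1$ on $U$. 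Unpacking the definition \eqref{metrcon} forces $V_y\equiv 1$ on $U$, so that $\rho_y^n=(s\wedge\bar s)^{1/m}/|f_B|^2$; in particular $s^{1/m}$ trivializes $K_{\cX/\Delta}+B$ with a flat metric on $\cX_U\ssm\Supp(B)$, because $\log|f_B|^2$ is pluriharmonic on that locus. Applying Berndtsson's curvature formula to the flat section $s$ and using the vanishing curvature of $\cF_m$ equates a non-negative fiber integral quadratic in $\bar\partial v$ to zero, whence $\bar\partial v\equiv 0$ on $\cX_U\ssm\Supp(B)$.

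For the extension across $\Supp(B)$, I would write $v=\partial_t-\sum_\alpha \rho^{\bar\beta\alpha}\rho_{t\bar\beta}\partial_\alpha$ in local coordinates where $B=\sum_j a_j B_j$ with $B_j=\{z_j=0\}$. The conic estimates \eqref{coniccontr} passed to the limit $\delta\to 0$ yield $\rho^{\bar\alpha\alpha}\lesssim |z_\alpha|^{2a_\alpha}$ near $B_\alpha$, while the uniform control \eqref{dt} on $\partial_t u_\delta$ bounds $\rho_{t\bar\alpha}$ to the appropriate order, so that the products $\rho^{\bar\beta\alpha}\rho_{t\bar\beta}$ are locally bounded and in fact vanish along each $B_\alpha$. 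Because these coefficients are holomorphic and bounded on $\cX\ssm\Supp(B)$ by the previous step, Riemann's removable singularity theorem extends them, and hence $v$, to a holomorphic vector field on $\cX$ tangent to $\Supp(B)$; this tangency will be essential for the ensuing deduction of local triviality.

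The main obstacle is making Berndtsson's curvature formula rigorous in the present conic, relatively-twisted setting: one has to track carefully the contribution of the singular volume form and the metric on $B$ in order to see that the flat trivializing choice of $s$ indeed kills all curvature terms, leaving only the non-negative fiber-integral of $|\bar\partial v|^2$. Once this is in place, the vanishing forces the full transverse holomorphicity of $v$, and the extension step is then straightforward given the conic bounds of \cite{GP}.
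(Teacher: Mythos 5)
Your proposal has a genuine gap in the holomorphicity part, and it stems from a misidentification of what remains to be proved.

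You correctly observe that Corollary~\ref{vhol} provides fiberwise holomorphicity, so what is missing is the transverse holomorphicity, i.e.\ $\bar\partial_{\bar t}\,a^\alpha=0$ for the horizontal-lift coefficients $a^\alpha=\rho^{\bar\beta\alpha}\rho_{t\bar\beta}$. But then you invoke Berndtsson's curvature formula to obtain ``$\bar\partial v\equiv 0$''. Berndtsson-type curvature identities for direct images give, upon vanishing of the curvature of $\cF_m$, the vanishing of the \emph{fiberwise} $\bar\partial v$ (i.e.\ of the harmonic Kodaira--Spencer representative) — but this is exactly the part that has already been established via Proposition~\ref{conv} and Corollary~\ref{vhol} (indeed, the Laplace identity \eqref{mp25} feeding into that proof is a pointwise avatar of Berndtsson's formula, cf.\ Lemma~\ref{Laplace}). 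It does not give $\bar\partial_{\bar t}v=0$. So even if you made the conic version of Berndtsson's formula rigorous, you would have reproved the result of Corollary~\ref{vhol}, not the new claim. The paper instead uses the elementary identity from \cite[Lem.~2.5]{BoBJDG},
$$\bar\partial_t v_\delta\intprod\tau_\delta=\bar\partial c(\tau_\delta)-i\,\tau_\delta(\bar\partial v_\delta,\bar v_\delta),$$
and lets $\delta\to 0$: the first term on the right vanishes in the limit thanks to Corollary~\ref{const} (convergence of $dc(\tau_\delta)$ to zero), the second by Corollary~\ref{vhol}, and $\tau_\delta\to\rho$ is nondegenerate away from $B$, giving $\bar\partial_{\bar t}v=0$. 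This formula, which is the real engine of the step, is absent from your argument.

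Your extension step also differs from the paper's and is not clearly correct as stated. You claim local boundedness and even vanishing of $\rho^{\bar\beta\alpha}\rho_{t\bar\beta}$ along each $B_\alpha$ from \eqref{coniccontr} and \eqref{dt}, then apply Riemann's removable singularity theorem. The boundedness of the products (especially the off-diagonal terms $\beta\neq\alpha$) requires a pointwise analysis that is not carried out, and the conic estimates as stated do not immediately deliver it. The paper avoids this delicacy entirely: the estimate \eqref{2} gives $\int_{X_t}|v_\delta|^2_\omega\,\tau_\delta^n\leq C$ uniformly, the observation that $\tau_\delta^n\wedge i\,dt\wedge d\bar t$ dominates a fixed smooth volume form $dV$ on $\cX$ yields $\int_{p^{-1}(U)\ssm\Supp(B)}|v_\delta|^2_\omega\,dV\leq C$, and Fatou's lemma passes this to $v$. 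A holomorphic vector field on $\cX\ssm\Supp(B)$ that is locally $L^2$ near the divisor extends across it; this is weaker than the $L^\infty$ bound you need for Riemann's theorem, and it is directly provided by the estimates in hand.
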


\begin{proof}
  This first assertion follows from a simple computation in \cite[Lem.~2.5]{BoBJDG}.
  In our setting, this yields on $X_t\ssm \Supp(B_t)$:
\begin{equation}\label{mp55}
  \bar \d_t \vd \intprod {\td}=\bar \d c(\td)-i \td(\bar \d \vd, \bar{v}_{\delta})\end{equation}
As on $X_t\ssm \Supp(B_t)$, $\td$ and $\vd$ converge locally smoothly to $\rho$ and $v$ respectively, one deduces from Corollary~\ref{const} above that $v$ is holomorphic (hence smooth, too) in the $t$ variable as well, outside $\Supp(B)$.

For the second assertion, let us first observe that ${\td}^n\wedge idt\wedge d\bar t$ dominates a smooth volume form $dV$ on $\cX$. Therefore, it follows from \eqref{2} that
$$\int_{p^{-1}(U)\ssm \Supp(B)} |\vd|^2_{\om} \,  dV \le C$$
An application of Fatou lemma gives:
$$\int_{p^{-1}(U)\ssm \Supp(B)} |v|^2_{\om} \,  dV < +\infty$$
By Hartog's theorem, it follows that $v$ extends to a holomorphic vector field across $\Supp(B)$.
\end{proof}

\begin{lemm}
The vector field $v$ preserves $\rho$, hence its flow preserves $B$.
\end{lemm}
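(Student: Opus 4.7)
The plan is to establish that $\mathcal{L}_v\rho$ restricts trivially to each fiber $X_t$, to integrate this infinitesimal statement so as to obtain a family of fiberwise biholomorphic isometries via the real flow of $v$, and finally to use the Ricci equation $\Ric(\rho|_{X_t})=[B_t]$ to force those isometries to carry $B_t$ onto $B_{t+s}$.

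On the smooth locus $\cX\ssm \Supp(B)$, the explicit expression \eqref{hlift} of the horizontal lift combined with a direct interior-product calculation will yield
\[
v\intprod \rho \;=\; i\,c(\rho)\,d\bar t, \qquad c(\rho)=\rho_{t\bar t}-\rho^{\bar\beta\alpha}\rho_{t\bar\beta}\rho_{\alpha\bar t},
\]
where $c(\rho)$ is the geodesic curvature from \eqref{mp21}. Since $d\rho = 0$, Cartan's formula gives $\mathcal{L}_v\rho = d(v\intprod \rho) = i\,dc(\rho)\wedge d\bar t$. The crucial input is that $c(\rho)$ is constant on every connected fiber $X_t\ssm \Supp(B_t)$; this I would deduce by passing to the limit in Corollary~\ref{const}, using the smooth fiberwise convergence $\td\to\rho$, $\vd\to v$ provided by Corollary~\ref{vhol} and Lemma~\ref{conv0}. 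Consequently $c(\rho)$ depends only on the base variable, $\mathcal{L}_v\rho$ is a multiple of $dt\wedge d\bar t$, and so is $\mathcal{L}_{\bar v}\rho$ by conjugation; restricting to any fiber annihilates $dt\wedge d\bar t$, giving $(\mathcal{L}_{\mathrm{Re}(v)}\rho)|_{X_t\ssm \Supp(B_t)}=0$.

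I would then denote by $\psi_s$ the local flow of $\mathrm{Re}(v)=\tfrac12(v+\bar v)$; since $v$ is holomorphic on $\cX$, $\psi_s$ is a biholomorphism sending $X_t$ to $X_{t+s}$. The infinitesimal vanishing just obtained gives $\tfrac{d}{ds}\psi_s^*(\rho|_{X_{t+s}}) = 0$ on $X_t\ssm \Supp(B_t)$, hence $\psi_s^*(\rho|_{X_{t+s}}) = \rho|_{X_t}$ on that open set, and the H\"older continuity of the fiberwise potentials proved in \cite{GP} extends the identity to all of $X_t$. Pulling back the fiberwise Ricci equation through the biholomorphism $\psi_s$ then yields $\psi_s^*[B_{t+s}] = [B_t]$, i.e.\ $\psi_s(B_t)=B_{t+s}$ as effective divisors, which is exactly the statement that the flow preserves $B$.

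The main delicate point I anticipate is the meaningfulness of the identity $v\intprod \rho = ic(\rho)\,d\bar t$ across $\Supp(B)$, where $\rho$ is only a closed positive current and the coordinate formula requires smoothness. I would circumvent this by carrying out the entire computation at the level of the smooth approximants $\td,\vd$, for which $\vd\intprod\td = ic(\td)\,d\bar t$ and $\mathcal{L}_{\vd}\td = i\,dc(\td)\wedge d\bar t$ hold classically on the whole of $\cX$, and then passing to the limit on $\cX\ssm\Supp(B)$; this suffices to derive the fiberwise isometry on the regular part, which in turn forces the preservation of $B$ via the Ricci-equation argument above.
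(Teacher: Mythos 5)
Your proposal is correct and follows essentially the same architecture as the paper's proof: show the Lie derivative of $\rho$ along the horizontal lift vanishes fiberwise, deduce that the flow yields fiberwise isometries, then pull back the Ricci equation to force the flow to carry $B_t$ onto $B_{t+s}$. The computation $v\intprod\rho = ic(\rho)\,d\bar t$ and the passage through the approximants $\tau_\delta, v_\delta$ of \eqref{coniccontr} are exactly the mechanisms used in the text, and your invocation of Corollary~\ref{const} to obtain fiberwise constancy of $c(\rho)$ is the right source.

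The one genuinely different ingredient is your treatment of the extension across $\Supp(B)$. The paper establishes directly that the positive current $\rho$ places no mass on $\Supp(B)$, via a cut-off family $\chi_\delta$ with $\|\nabla\chi_\delta\|_{L^2}\to 0$ and $\|\Delta\chi_\delta\|_{L^1}\to 0$ and an integration-by-parts estimate; from this it deduces that $v\intprod\rho$ and $\mathcal{L}_v\rho$ extend as currents on all of $\cX$, which then feeds into the weak ODE $\frac{d}{dt}F_t^*\omega_t = 0$. You instead keep everything fiberwise: you derive $\psi_s^*(\rho|_{X_{t+s}}) = \rho|_{X_t}$ on $X_t\ssm\Supp(B_t)$ and then pass to all of $X_t$ via the H\"older continuity of the fiberwise potentials. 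This is a viable alternative, and arguably cleaner in that it does not require exhibiting cut-offs on the total space, but you should make the removability step explicit: the difference of the two H\"older-continuous potentials is pluriharmonic on $X_t\ssm\Supp(B_t)$ and continuous across the analytic set $\Supp(B_t)$, and it is the removable singularity theorem for bounded pluriharmonic functions across analytic subsets (not mere continuity plus density) that forces the difference to be pluriharmonic on all of $X_t$. One further small point worth flagging: your version only requires $c(\rho)$ to be constant along each fiber, and never needs $\mathcal{L}_v\rho$ (or $v\intprod\rho$) to vanish as a form on the total space $\cX$; this is in fact the more economical statement, since $c(\rho)$ need not vanish.
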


\begin{proof}
On $\cX\ssm \Supp(B)$, we obtain the equality
\begin{equation}\label{mp56}\mathcal{L}_{v}\rho= 0\end{equation}
as a consequence of \eqref{mp55}.

We show next that \eqref{mp56} extends in the sense of currents on $\cX$.
Indeed, if so then we claim that
the flow of $v$ produces the biholomorphic maps $F_t=X_0\to X_t$ such that
$F_0$ is the identity and such that
$F_t^*\omt=\om_0$. It is for this equality that we need \eqref{mp56}
to hold on $\cX$ in the sense of currents: it gives
\begin{equation}\label{mp57}
\frac{d}{dt}F_t^\star\omega_t= 0
\end{equation}
in weak sense on $\cX$, but this is enough to conclude that $F_t^*\omt=\om_0$.

If one pulls back the K\"ahler-Einstein equation satisfied by $\omt$ by $F_t$, one gets
$$\Ric F_t^{*}\om_t  = -F_t^*\om_t+F_t^*[B_t]$$ where $[B_t]=\sum_k a_k [B_{t,k}]$ if $B_{t,k}$ are the irreducible components of $\Supp(B)$. Because $F_t^*\om_t= \om_0$, we obtain $$F_t^*[B_t]= [B_0]$$ In particular, the local flow of $v$ preserves $\mathrm{Supp}(B)$.\\

Let us now prove that $v\intprod \rho$ is zero on $\cX$. First, let us observe that $\rho$ being a positive current, its coefficients are locally defined complex measures. We claim that these measures put no mass on $\Supp(B)$.

\noindent
Indeed, by e.g. \cite[Proposition 1.14]{Dem1} the "mixed terms" of $\rho$ are dominated by the trace of $\rho$ (the sum of the diagonal coefficients). Therefore everything boils down to showing that if $\om$ is a given smooth Kähler form on $\cX$, then the positive measure $\rho \wedge \om^n$ does not charge $\Supp(B)$. But it is easy to produce a family of cut-off function $\chi_{\delta}$ such that $\chi_{\delta}$ tends to the characteristic function of $\Supp(B)$ and such that $||\nabla{\om} \chi_{\delta}||_{L^2(\om^{n+1})}$ and $||\Delta_{\om} \chi_{\delta}||_{L^1(\om^{n+1})}$ tends to $0$. We refer to e.g. \cite[\S 9]{CGP} for this classic construction. Finally, let us introduce $\eta$ a smooth positive function with compact support on $\cX$. One can assume that on $\Supp(\eta)$, $\rho= \ddc \psi$ admits a local (bounded) potential. Performing an integration by parts, one obtains:
{\footnotesize
\begin{eqnarray*}
\int_{\cX}\eta \chi_{\delta} \,\rho \wedge \om^{n} &=& \int_{\cX} \eta \chi_{\delta} \,\ddc\psi \wedge \om^{n}\\
&=&\int_{\cX} \eta \psi\, \ddc\chi_{\delta}\wedge \om^n+\int_{\cX}\chi \psi \,\ddc \eta \wedge \om^n+ \int_{\cX}\psi \, d\eta \wedge d^c \chi_{\delta}\wedge \om^n \\
& \le & ||\psi||_{\infty} \left( ||\eta||_{\infty} \cdotp ||\Delta_{\om} \chi_{\delta}||_{L^1}+ ||\Delta \eta||_{\infty}\int_{\Supp(\chi_{\delta})}\om^{n+1}+||\nabla \eta||_{L^2} \cdotp ||\nabla \chi_{\delta}||_{L^2}\right)
\end{eqnarray*}}
which tends to $0$.
\smallskip

\noindent In conclusion, the coefficients of $\rho$ and hence those of $v \intprod \rho$  are complex measures which do not charge $B$. As $v \intprod \rho=0$ outside $\Supp(B)$, this identity extends across $\Supp(B)$, which is what we wanted to prove.
\end{proof}

If we sum up the results obtained so far, we can find near any $y\in Y°$ a sufficiently small polydisk $U\subset Y°$ with coordinates $(t_1, \ldots, t_m)$ centered around $y$ as well as holomorphic vector fields $v_1, \ldots, v_m$ on $p^{-1}(U)$ lifting $\frac{\d}{\d t_1}, \ldots, \frac{\d}{\d t_m}$ which are tangent to $\Supp(B)$. Up to shrinking $U$, one can assume that the flow of the vector fields $v_{\underline a}:=\sum a_i v_i$ for $\underline a = (a_1, \ldots, a_m) \in \mathbb D^m$ exists at least up to time one. Here $\mathbb D$ is the unit disk in $\mathbb C$. Then one has a holomorphic map $f:X_y \times \mathbb D^m \to p^{-1}(U)$ which sends $(x, \underline a)$ to $\phi_1^{\underline a}(x)$ where $(\phi_t^{\underline a})_t $ is the flow of $v_{\underline a}$. It is easy to see that $f$ is an isomorphism onto its image, cf e.g. \cite{KodM}.

To conclude the proof of Theorem \ref{foliationiso}, we need to show that $v_{\rho}$ extends across the singular locus of $p$ provided that $X$ is compact and $p$ is smooth in codimension one. The argument goes as follows.

\begin{proof}[End of the proof of Theorem~\ref{foliationiso}]
Let $n$ be the relative dimension of $p$ and let $m:=\dim Y$. Let $Y° \subset Y$ be the smooth locus of $p$, and let $X°:=p^{-1}(Y°)$.
Let $\Omega \in H^0 (X, m (K_{X/Y} +B))$.
Let $\rho =\omega + dd^c \psi$ be the positive current constructed in Theorem~\ref{thm:cyvar}, and let $y\in Y\ssm Y°$.

Let $x\in X$ be a generic point of $p^{-1} (y)$. Take a small neighborhood $U$ of $x$, and set $D:=p(U)$. As $p$ is smooth on codim $1$, $p$ is smooth on $U$.
We can thus fix a coordinate system $(\underline t, z_1, \ldots, z_n)$ of $U$, such that $\underline t$ represents the horizontal directions and $\frac{\partial}{\partial z_i}$ is in the fiber direction. The notation $\underline t$ means that $\underline t = (t_1, \ldots, t_m)$. There is a slight abuse of notation: the coordinate of the base is also $\underline t$. But as $p$ is smooth on $U$, we just mean that $p_* (\frac{\partial}{\partial t_i} ) =\frac{\partial}{\partial t_i}$, where the former is on $X$ and the later is on $Y$. Finally, we set $p^* (id\underline t \wedge \overline{ d \underline t}) :=\bigwedge_{k=1}^m id\underline t_k \wedge \overline{ d \underline t_k}$.

Let $v_k$ be the holomorphic vector field on $X° \cap p^{-1}(D)$ constructed in the proof of Theorem~\ref{foliationiso}, attached to $\frac{\d}{\d t_k}$, where $1\le k \le m$.

\begin{equation}
\label{mainequ}
\rho ^n \wedge \pdt = \frac{(\Omega \wedge \overline{\Omega})^\frac{1}{m}}{|f_B| ^2} \wedge \pdt \qquad\text{on } U .
\end{equation}

\noindent
 We know that $\iota_{v_k} \rho $ is proportional to  $d\bar t_k$, from which it follows that
\begin{equation}
\label{vvb}
\iota_{v_1, \bar v_1} \cdots \iota_{v_m,\bar v_m} (\rho^n \wedge p^* (id\underline t \wedge \overline{ d \underline t})  ) = \rho^n	
\end{equation}
Combining \eqref{mainequ} and \eqref{vvb}, one gets
$$\iota_{v_1, \bar v_1} \cdots \iota_{v_m,\bar v_m}  \left[\frac{(\Omega \wedge \overline{\Omega})^\frac{1}{m}}{|f_B| ^2} \wedge \pdt \right] = \rho^n$$
One can find a K\"ahler form $\om_X$ on $X$ such that $\frac{(\Omega \wedge \overline{\Omega})^\frac{1}{m}}{|f_B| ^2} \wedge \pdt \ge \om_X^{n+m}$. Given that $\om_X^m \wedge [\iota_{v_1, \bar v_1} \cdots \iota_{v_m,\bar v_m}(\om_X^{n+m})]= (\prod_k |v_k|^2_{\om_X})\cdotp  \om_X^{n+m}$ (maybe up to some constant), we eventually get that
$$\int_{U\cap X°}\left(\prod_{k=1}^m |v_k|^2_{\om_X}\right) \cdotp \om_X^{n+m} \le \int_{U\cap X°} \rho^n \wedge \om_X^m$$
and the right hand side is finite, dominated by $\int_{X}\la \rho^n \wedge \om_X^m \ra \le \{\om\}^{n}\cdotp \{\om_X\}^m$ by \cite[Prop.~1.6 \& 1.20]{BEGZ}, given that $\rho$ is a closed, positive current on $X$ in the cohomology class $\{\om\}$.

As $|v_k|^2_{\om_X}$ is uniformly bounded below by a positive constant on $p^{-1}(D)\cap X°$, one deduce that $v_k\in L^2(p^{-1}(D)\cap X°, \om_X)$.
By Riemann extension theorem the holomorphic vector fields $v_k$ extend to holomorphic vector fields on $p^{-1}(D)$ whose flow provide the expected trivialization. Indeed, the $v_k$ are tangent to $B$ on $X°$, hence they are tangent to $B$ everywhere by the assumptions in \ref{foliationiso}.
\end{proof}
\medskip

\noindent As application of Theorem~\ref{foliationiso} we can prove Corollary~\ref{bigdir}.

\begin{proof}[Proof of Corollary~\ref{bigdir}]
\label{bigdirp}
  Our proof follows the same line of arguments as in \cite{Kol87}.

  We proceed by contradiction: assume that $\cF_m$ is not big. In any case, this bundle
  can be endowed with a metric (used several times in the current subsection) with semi-positive curvature form denoted by $\theta$, and smooth on a Zariski open subset $V\subset Y$ as $B$ is generically transverse to the fibers. Then we claim that we have
\begin{equation}\label{app1}
\theta|_V^{\dim(Y)}= 0
\end{equation}
at each point of $V$. Indeed, if \eqref{app1} is not true, then there exists a point $y_0\in V$ such that all the eigenvalues of $\displaystyle \theta_{y_0}$ are strictly positive.
  By the singular version of holomorphic
  Morse inequalities (cf. \cite[Cor.~3.3]{Bou02}) this implies that $\cF_m$ is big, and we have assumed that this is not the case.

  It follows that the kernel of $\theta$ is non-trivial at each point of $V$. Since $\theta|_V$ is smooth and closed, locally near each point of $V$ its kernel defines a foliation whose leaves are analytic sets, cf \cite{Kol87} and the references therein. We choose a smooth holomorphic disk $\Delta$ contained in such a leaf; the restriction of $p$ to $p^{-1}(\Delta):= X_\Delta$ is a submersion, and the curvature of the direct image of the relative pluricanonical bundle is identically zero.
  By Theorem~\ref{foliationiso} we infer that the vector $v_\rho$ is holomorphic.
On the other hand, $\dbar v_\rho$ is a representative of the image of the tangent vector $\displaystyle \frac{\partial}{\partial t}\in T_\Delta$ by the map \eqref{ks}. Since by hypothesis this map is injective, we obtain a contradiction.
\end{proof}
\medskip

\noindent We finish the current section with the proof of Corollary~\ref{ambro}.

\begin{proof}[Proof of Corollary~\ref{ambro}]
\label{pageambro}
The statement \ref{part1cor} is a direct consequence of \cite{Gue16} applied to the right hand side term of the equality
\begin{equation}
\label{candecom}
- p^* (K_Y) =K_{X/Y} + (-K_X -B) +B.
\end{equation}
By hypothesis the class $-c_1(K_X+ B)$ is in the closure of the K\"ahler cone of $X$ and one can use \emph{loc. cit.}
\smallskip

\noindent
Given Theorem~\ref{foliationiso}, it would be enough to prove that $p$ is smooth in codimension one. We use the following elegant argument due to Q. Zhang, cf. \cite{Zha05}.
Assume that there exists some codimension one subvariety $D \subset X$ such that $p_* (D)$ is of codimension at least two.
Let $\tau : Y' \rightarrow Y$ be the composition of the blow-up of the closed analytic set $p_* (D)$ with a resolution of singularities of the resulting complex space. There exists an effective divisor $E_{Y'}$ whose support is contained
in the $\tau$-exceptional locus such that we have
$$K_{Y'} \sim E_{Y'}.$$
Let $p' : X' \rightarrow Y'$ be a resolution of indeterminacies of $X \dashrightarrow Y'$.
As $c_1 (K_X +B)=0$,
we have
$$ (p')^* (-K_{Y'}) +E_{X'} \equiv_{\Q} K_{X'/Y'} +B' ,$$
where $E_{X'}$ is supported in exceptional locus of $\pi: X' \rightarrow X$.
By \cite{Gue16}, $K_{X'/Y'} +B'$ is pseudo-effective. Therefore the direct image
$\pi_* ((p')^* (-K_{Y'}) +E_{X'}) = \pi_* ( -E_{Y'})$ is pseudo-effective as well.
However by construction we have $\pi_* (E_{Y'}) \geq [D]$, and we obtain a contradiction.

We prove next that the map $p$ is reduced in codimension one.
Let $E \subset Y$ be a divisor. Its $p$-inverse image can be written as
$$p^{-1} (E) = \sum_i a_i [D_i]$$
where $D_i\subset X$ are irreducible divisors.
It is well know that (cf. \cite[Thm.~2.4]{CP17} or also \cite{Taka16})
$$K_{X/Y} +B \geq \sum_i (a_i-1)_+ \cdot [D_i] ,$$
where $(a_i-1)_+ := \max \{a_i -1 ,0\}$.

Therefore we must have $a_i =1$ for every $i$, since
by assumption $K_{X/Y} +B \equiv_{\Q} 0$.
Corollary \ref{ambro}
is proved.
\end{proof}


\subsection{Log abundance in the Kähler setting}

In this section, we briefly explain how to prove the log abundance for klt Kähler pairs $(X,B)$ such that $B$ has snc support. This is based on the following lemma, which is a consequence of \cite{Bud09} and \cite[Cor 1.4]{WangB} (cf. also \cite[Lem.1.1]{CKP} and \cite{CT11} and the references therein). For the reader's convenience, we recall briefly the proof here. \footnote{We would like to thank Botong Wang for telling us the following nice application of his result.} 

After this paper was written, J. Wang \cite[Thm.~D]{JWang19} proved a slightly more general case of Corollary~\ref{log-ab} below using similar arguments. 

\begin{lemm}
\label{lemm:log-ab}
Let $X$ be a compact K\"ahler manifold and let $\Delta =\sum a_i B_i$ be an effective klt $\mathbb{Q}$-divisor with simple normal crossing support.
Assume that $\Delta \sim_{\mathbb Q} L_1$ for some $L_1 \in \Pic (X)$.
For each integer $k \geq 0$, define $L_k := k L_1  - \lfloor k \Delta \rfloor$.
Then for each $k, i$ and $q$, the set
$$V_i ^q ( L_k) =\{\lambda\in \Pic^\circ (X) ; \, h^q (X, K_X + L_k +\lambda) \geq i\}$$
is a finite union of translates of complex subtori of $\Pic^\circ (X)$ by torsion points.
\end{lemm}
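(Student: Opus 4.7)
The plan is to reduce the statement to the generic vanishing theorem of B.~Wang \cite{WangB} for compact K\"ahler manifolds, via a cyclic cover trick in the spirit of Kawamata and Esnault--Viehweg.

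First, I would choose an integer $N \geq 1$ divisible enough so that $Na_i \in \Z$ for every $i$ and $NL_1 \sim \O_X(N\Delta)$ as honest line bundles. A non-vanishing section of $NL_1 - N\Delta$ then yields a cyclic $N$-cover $\pi_0 : Y_0 \to X$ branched along $\Supp(\Delta)$; after resolving singularities I obtain a smooth compact K\"ahler manifold $Y$ equipped with a generically finite morphism $\pi : Y \to X$ of degree $N$ and an induced $\Z/N\Z$-Galois action over $X$. The classical Esnault--Viehweg decomposition then reads
\[
\pi_\star \omega_Y \;=\; \bigoplus_{k=0}^{N-1} \omega_X \otimes L_k ,
\]
and this splitting matches the decomposition into character eigenspaces under the Galois action. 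For any $\lambda \in \Pic^\circ(X)$, the projection formula applied to $\omega_Y \otimes \pi^\star \lambda$ gives
\[
H^q\bigl(Y,\omega_Y \otimes \pi^\star \lambda\bigr) \;=\; \bigoplus_{k=0}^{N-1} H^q\bigl(X,\omega_X \otimes L_k \otimes \lambda\bigr) ,
\]
so that $h^q(X, K_X + L_k + \lambda)$ identifies with the dimension of the $\chi^k$-isotypical component of $H^q(Y, \omega_Y \otimes \pi^\star \lambda)$, where $\chi$ denotes a fixed primitive character of $\Z/N\Z$.

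Second, Wang's theorem \cite{WangB} on the compact K\"ahler manifold $Y$ asserts that the cohomology support loci
\[
W_i^q(Y) \;=\; \{\, \mu \in \Pic^\circ(Y) : h^q(Y, \omega_Y \otimes \mu) \geq i \,\}
\]
are finite unions of torsion translates of complex subtori of $\Pic^\circ(Y)$. Since these loci are cut out by a Galois-equivariant datum, their character refinements
\[
W_{i,k}^q(Y) \;=\; \{\, \mu \in \Pic^\circ(Y) : \dim H^q(Y, \omega_Y \otimes \mu)^{\chi^k} \geq i \,\}
\]
inherit the same structural description. The pullback morphism $\pi^\star : \Pic^\circ(X) \to \Pic^\circ(Y)$ is an isogeny onto its image, and by the isotypical identification above one has $V_i^q(L_k) = (\pi^\star)^{-1}\bigl( W_{i,k}^q(Y) \bigr)$. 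Preimages of torsion translates of subtori under an isogeny of complex tori are again finite unions of torsion translates of subtori, so the lemma will follow.

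The main obstacle I foresee is the equivariant refinement of Wang's theorem invoked in the second step: although not stated in this exact form in \cite{WangB}, it follows from running Wang's Hodge-theoretic argument inside each Galois eigenspace, the point being that the jumping conditions are preserved by the Galois action. Everything else is a mechanical combination of standard ingredients: the Esnault--Viehweg splitting, the projection formula, and the behaviour of torsion translates of subtori under isogenies.
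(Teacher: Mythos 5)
Your cyclic-cover set-up and the reduction to the K\"ahler generic-vanishing theorem of \cite{WangB} coincide with the first half of the paper's argument (modulo one point you gloss over: the resolution of the cyclic cover has to be controlled via the fact that the cover has quotient, hence rational, singularities, as the paper does using \cite{Viehweg77}, \cite{Burns}, \cite{KM} and Grauert--Riemenschneider; otherwise the Esnault--Viehweg splitting $\pi_\star\omega_Y=\bigoplus_k \omega_X\otimes L_k$ after resolution is not a formal consequence). The genuine divergence, and the genuine gap, lies in your final step.

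You invoke an \emph{equivariant refinement} of Wang's theorem: that the loci
\[
W^q_{i,k}(Y)=\{\mu : \dim H^q(Y,\omega_Y\otimes\mu)^{\chi^k}\geq i\}
\]
are torsion translates of subtori. This is not a statement in \cite{WangB}, and the claim that it ``follows from running Wang's Hodge-theoretic argument inside each Galois eigenspace'' is precisely the nontrivial point you have not supplied a proof for --- it is not, as you assert, a mechanical combination of standard ingredients. There is moreover a notational problem: the $\Z/N\Z$-action on $H^q(Y,\omega_Y\otimes\mu)$ requires a $G$-equivariant structure on $\mu$, which exists only when $\mu$ is $G$-invariant (e.g.\ $\mu=\pi^\star\lambda$). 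So $W^q_{i,k}(Y)$ is not actually a well-defined subset of $\Pic^\circ(Y)$, and the identity $V_i^q(L_k)=(\pi^\star)^{-1}(W^q_{i,k}(Y))$ does not have the meaning you assign it. At best $W^q_{i,k}$ lives on $\Pic^\circ(Y)^G$, for which you would again need the unstated equivariant version of Wang's theorem.

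The paper circumvents all of this with a purely combinatorial reduction. Having established, exactly as you do, that
\[
V_i^q(f)=g^{-1}\bigl(V_i^q\bigr)
\]
is a finite union of torsion translates of subtori (here $g=\pi^\star$), it uses the identity
\[
V_i^q(f)=\bigcup_{i_0+\cdots+i_{N-1}=i}\ \bigcap_{k=0}^{N-1}V_{i_k}^q(L_k),
\]
which is a direct consequence of the splitting $H^q(\widehat X, K_{\widehat X}+f^\star\lambda)\simeq\bigoplus_k H^q(X,K_X+L_k+\lambda)$, and then invokes \cite[Lemma 1.1]{CKP} to conclude that each individual summand locus $V_{i_k}^q(L_k)$ inherits the torsion-translate structure from the union-of-intersections. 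This step replaces your equivariant refinement by a known combinatorial lemma and requires no equivariant input whatsoever. To repair your proposal you would either need to prove the equivariant generic vanishing statement in the K\"ahler setting, or switch to the paper's combinatorial argument via \cite[Lemma 1.1]{CKP}.
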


\begin{proof}
Let $N$ be the minimal number such that $N \cdot a_i \in \mathbb{N}$
for every $i$. Let $\sigma :\widetilde{X} \rightarrow X$ be the $N$-cyclic cover of $L_1$ along the canonical section of $N L_1$. One can check that $\wt X$ has analytic quotient singularities \cite[Lem.~2]{Viehweg77}, hence rational singularities by e.g. \cite[Prop.~4.1]{Burns}. This implies in turn that for any resolution $\pi:\widehat{X} \rightarrow \widetilde{X}$, one has $\pi_*\mathcal O_{\widehat X}(K_{\widehat X}) =\mathcal O_{\wt X}(K_{\wt X})$ thanks to e.g. \cite[Thm.~5.10]{KM} and $R^i\pi_*\mathcal O_{\widehat X}(K_{\widehat X})=0$ for $i>0$ by Grauert-Riemenschneider vanishing. Moreover, one has $\sigma_*\mathcal O_{\wt X}(K_{\wt X})=\mathcal O_X( K_X)\otimes \bigoplus_{k=0}^{N-1}L_k$ and $R^i\sigma_*\mathcal O_{\wt X}(K_{\widetilde X})=0$ for $i>0$ since $\sigma$ is finite. Therefore, if we define $f:=\sigma \circ \pi: \widehat{X} \rightarrow X$, we have
\begin{equation}\label{splitdecom}
H^q (\widehat{X}, K_{\widehat{X}} + f^* \lambda) \simeq \bigoplus_{k=0}^{N-1} H^q (X, K_X +L_k +\lambda)
\end{equation}
for any line bundle $\lambda$ on $X$.

\medskip

Let $g: \Pic^\circ (X) \rightarrow \Pic^\circ (\widehat{X})$ be the natural morphism induced by $f$ and set
$$V_i ^q (f) := \{\rho\in \Pic^\circ (X) ; \, h^q ( \widehat{X} , K_{\widehat{X}} + f^* \rho)   \geq i \}$$
and
$$V_i ^q := \{\rho\in \Pic^\circ (\widehat{X}), h^q (\widehat{X} ; \, K_{\widehat{X}} +\rho)  \geq i \} .$$
Then we have
\begin{equation}\label{pulba}
V_i ^q (f) = g^{-1} (V_i ^q ) .
\end{equation}
Thanks to \cite{WangB}, $V_i ^q$ is a finite union of torsion translates of complex subtori of $\Pic^\circ (\widehat{X})$.
Together with \eqref{pulba}, this shows that $V_i ^q (f)$ has the same structure.
Thanks to \eqref{splitdecom}, we have
\begin{equation}\label{pulba1}
V_i ^q (f) = \cup_{i_0 +\cdots + i_{N-1} =i} \big( \cap_{k=0}^{N-1} V_{i_k} ^q (L_k)\big) ,
\end{equation}
where $V_i ^q (L_k) := \{\rho\in \Pic^\circ (X) , h^q (X , K_X + L_k + \rho)   \geq i \}$.
As $V_i ^q (f)$ is the finite union of torsion translates of complex subtori, we get from  \eqref{pulba1} that  $V_i ^q (L_k)$ has the same structure, cf \cite[Lemma 1.1]{CKP}.
\end{proof}

\begin{coro}
\label{log-ab}
Let $(X,\Delta)$ be a klt pair where $X$ is compact Kähler and $\Delta =\sum a_i B_i$ is an effective $\mathbb{Q}$-divisor.
If  $c_1 (K_X +\Delta) =0 \in H^{1,1} (X, \mathbb{Q})$, then $K_X +\Delta$ is $\mathbb{Q}$-effective.
\end{coro}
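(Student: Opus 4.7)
The plan is to reduce to the snc case and apply Lemma~\ref{lemm:log-ab} via a torus-translate argument on cohomology jumping loci. Assume first that $\Delta$ has simple normal crossing support. Since $c_1(K_X+\Delta)=0$ forces $c_1(\Delta)=-c_1(K_X)\in \mathrm{NS}(X)$, the divisibility of $\Pic^{\circ}(X)$ allows one to find a line bundle $L_1\in\Pic(X)$ with $L_1\sim_{\mathbb Q}\Delta$. Set $\mathcal L:=K_X+L_1$; then $c_1(\mathcal L)=0$, so $\mathcal L\in\Pic^{\circ}(X)$ and $\mathcal L\sim_{\mathbb Q}K_X+\Delta$.

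The starting observation is that $-\mathcal L\in V_1^0(L_1)$, since $K_X+L_1-\mathcal L=\mathcal O_X$ has a non-zero global section. By Lemma~\ref{lemm:log-ab}, $V_1^0(L_1)$ is a finite union of translates of subtori by torsion points, so there exist a torsion point $\tau$ of some order $N$ and a subtorus $T\subset\Pic^{\circ}(X)$ with $\tau+T\subset V_1^0(L_1)$ and $-\mathcal L=\tau+t$ for some $t\in T$. The identity
\[
(N-1)\mathcal L-\tau=-N\tau-(N-1)t=-(N-1)t\in T
\]
places $(N-1)\mathcal L$ in the same component $\tau+T$ of $V_1^0(L_1)$, hence
\[
h^0\bigl(X,\,N\mathcal L\bigr)=h^0\bigl(X,\,K_X+L_1+(N-1)\mathcal L\bigr)\geq 1.
\]
Since an effective divisor with vanishing first Chern class on a compact K\"ahler manifold is zero, the existence of a non-trivial section of $N\mathcal L\in\Pic^{\circ}(X)$ forces $N\mathcal L\simeq\mathcal O_X$. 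Consequently $\mathcal L$, hence $K_X+\Delta$, is torsion in $\Pic(X)_{\mathbb Q}$, and in particular $\mathbb Q$-effective.

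For a general (non-snc) klt $\Delta$, I would pass to a log resolution $\pi:Y\to X$ and write $K_Y+\tilde\Delta^+=\pi^{\star}(K_X+\Delta)+\tilde\Delta^-$ with $\tilde\Delta^{\pm}$ effective, $(Y,\tilde\Delta^+)$ klt snc, and $\tilde\Delta^-$ exceptional. Running the argument on $Y$ with the auxiliary line bundle $\mathcal L':=K_Y+L_1-\tilde\Delta^-\in\Pic^{\circ}(Y)$ (where $L_1\sim_{\mathbb Q}\tilde\Delta^+$), and using that $h^0(Y,\tilde\Delta^-)\geq 1$ to place $-\mathcal L'$ in $V_1^0(L_1)$, produces a non-zero section of $N\mathcal L'+\tilde\Delta^-$; raising it to a sufficiently divisible power and pushing forward via $\pi_{\star}\mathcal O_Y(m\tilde\Delta^-)=\mathcal O_X$ and the projection formula then yields a section of a multiple of $K_X+\Delta$ on $X$.

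The heart of the argument, and what makes it non-trivial, is the torsion-translate identity: it is essential that Lemma~\ref{lemm:log-ab} provides a \emph{torsion} translating element $\tau$ rather than an arbitrary one, as this is precisely what forces an arithmetic progression of multiples of $\mathcal L$ to revisit the component $\tau+T$ of the jumping locus and produce the desired section. A bare generic vanishing statement for $V_1^0(L_1)$ would not suffice.
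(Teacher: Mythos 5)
Your argument in the snc case is correct and closely parallels the paper's: both place $-\mathcal{L}$ (your $\mathcal{L}$, the paper's $-L$) in $V_1^0(L_1)$ and exploit the torsion-translate structure given by Lemma~\ref{lemm:log-ab}. Your explicit computation that $(N-1)\mathcal{L}-\tau=-(N-1)t\in T$ is a slightly more transparent way of extracting the conclusion than the paper's "we can assume $\rho$ is torsion"; both amount to the observation that a torsion translate of a subtorus forces a multiple of $\mathcal{L}$ back into $V_1^0(L_1)$, and both land on the same punchline, namely that $N\mathcal{L}$ carries a section, hence $N\mathcal{L}\simeq\mathcal{O}_X$ because an effective divisor with trivial first Chern class on a compact K\"ahler manifold is zero. (A minor imprecision: $c_1(\mathcal{L})=0$ in $H^{1,1}(X,\mathbb{Q})$ a priori only puts $\mathcal{L}$ in $\Pic^{\tau}(X)$, not $\Pic^{\circ}(X)$; one fixes this by replacing $m$ by a multiple so that $m(K_X+\Delta)$ is integrally trivial in $\mathrm{NS}(X)$ before choosing $L_1$.)

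The reduction to the snc case, however, has a genuine gap. You define $\mathcal{L}':=K_Y+L_1-\tilde\Delta^-$ and assert $\mathcal{L}'\in\Pic^{\circ}(Y)$ and $h^0(Y,\tilde\Delta^-)\ge 1$, but $\tilde\Delta^-$ is only a $\mathbb{Q}$-divisor: its coefficients are the positive discrepancies of the log resolution and are in general not integers. So $\mathcal{O}_Y(\tilde\Delta^-)$ is not defined, $\mathcal{L}'$ is only a $\mathbb{Q}$-line bundle, and Lemma~\ref{lemm:log-ab} does not apply because its jumping loci live inside $\Pic^{\circ}(Y)$ rather than $\Pic^{\circ}(Y)\otimes\mathbb{Q}$. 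Rounding does not help: replacing $\tilde\Delta^-$ by $\lfloor\tilde\Delta^-\rfloor$ breaks the identity $c_1(\mathcal{L}')=0$, and passing to $m\tilde\Delta^-$ only at the end of the argument (as you propose with $\pi_{\star}\mathcal{O}_Y(m\tilde\Delta^-)=\mathcal{O}_X$) is too late, because the lemma must already be invoked on honest line bundles. This is precisely the issue that the paper's construction is engineered to resolve: it passes to the multiple $m(K_{X'}+\Delta')$, redistributes the fractional part of $(m-1)E$ into an auxiliary klt snc boundary $\Delta^+$ on $X'$, and isolates an \emph{integral} effective divisor $K_{X'}+\Delta^+-L$ before feeding the pair $(X',\Delta^+)$ into the lemma. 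Some equivalent bookkeeping is unavoidable, and your write-up skips it.
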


\begin{proof}
Let $ \pi: X' \rightarrow X$ be a log resolution of $(X,\Delta)$. Since $\Pic^\circ (X')$ is a torus and $c_1(K_X+\Delta)=0$, we can find $L\in \mathrm{Pic}^\circ(X')$ such that $\pi^*(K_X +\Delta) \sim_{\mathbb Q} L $. We can also find a klt divisor $\Delta'$ on $X'$ with normal crossing support such that
$$ K_{X'} +\Delta' \equiv_{\mathbb{Q}} \pi^* (K_X +\Delta) +E$$
for some $\mathbb{Q}$-effective divisor $E$ supported in the exceptional locus of $\pi$ having no common component with $\Delta'$. Let $m\ge 1$ be the smallest integer such that $mE$ has integral coefficients. In particular, $m(K_{X'}+\Delta')$ is equivalent to some line bundle on $X'$ by the formula above. Using the identity
$$m(K_{X'}+\Delta')=K_{X'}+\underbrace{\Big(\Delta'+\frac{m-1}m \{E\}\Big)}_{=:\Delta^+}+(m-1)(L+\lfloor E\rfloor)$$
we get a pair $(X',\Delta^+)$  such that
\begin{enumerate}
\item[$\bullet$] $\Delta^+$ has snc support and coefficients in $(0,1)\cap \mathbb Q$.  
\item[$\bullet$] $\Delta^+\sim M$ for some line bundle $M$ on $X'$. 
\item[$\bullet$] $K_{X'}+\Delta^++\rho$ is effective for some $\rho \in \mathrm{Pic}^\circ(X')$. 
\end{enumerate}
The first two properties are obvious, and the third follows from the identity $K_{X'}+\Delta^+-L=mE-(m-1)\lfloor E\rfloor$. By applying Lemma~\ref{lemm:log-ab} to $K_{X'} + \Delta^+$, we can assume that $\rho$ is torsion hence $h^0(X',r(K_{X'} + \Delta^+))\ge 1$ for some integer $r\ge 1$ that we can choose so that $m|r$. By doing so, one can ensure that $r(K_{X'}+\Delta^+)=\pi^*(r(K_X+\Delta))+F$ for some effective, integral $\pi$-exceptional divisor $F$. This implies that $h^0(X,r(K_X+\Delta)) \neq 0$. The Corollary is proved.
\end{proof}

\newpage

\section{Transverse regularity of singular Monge-Ampère equations}
\label{sec:transverse}
\noindent In this section our main goal is to prove Theorem \ref{thmc}. This will be achieved as a consequence of a few intermediate results which we state in a general setting.

The main source of difficulties in the proof of \ref{thmc} arise from the
fact that the set of base points of pluricanonical sections may be non-zero. The determinant of the metric adapted to this geometric setting \emph{vanishes} along the said base points so in particular the Ricci curvature of this metric is not bounded from below.
Unfortunately under these circumstances we were not able to obtain a complete analogue of
the Sobolev and Poincar\'e inequalities (which are needed for the study of the regularity properties of Monge-Amp\`ere equations). We will therefore start this section with a weak version of these results.

\subsection{Weak Sobolev and Poincar\'e inequalities}

In this section we will derive a version of the usual Poincar\'e and Sobolev type inequalities
which are needed in our context. As it is well known, they are playing a crucial role in the regularity questions for the Monge-Amp\`ere equations.
The set-up is as follows: let $(X, \omega)$ be a compact K\"ahler manifold of dimension $n$, and let
\begin{equation}\label{wsp1}
E:= \sum_{\alpha\in I} e_\alpha E_\alpha\quad B:= \sum_{\beta\in J} b_\beta B_\beta
\end{equation}
be two effective divisors on $X$ without common components,
such that $e_\alpha\in \Q_+, b_\beta\in [0, 1[$ and such that the support of $E+ B$ is snc.
We assume that the manifold $X$ is covered by a fixed family of coordinate sets $(\Omega_j)_j$
such that
\begin{equation}\label{E1}
\Omega_j\cap \Supp(E+ B)= (z_j^1\dots z_j^d= 0)
\end{equation}
where $(z_j)$ are coordinates on $\Omega_j$.

\noindent Let $\sigma_i, s_i$ be the canonical section of the
Hermitian bundle $\left(\O(E_i), h_i\right)$ and $\left(\O(B_i), g_i\right)$ respectively, where $h_i$ and $g_i$ are non-singular reference metrics.
For each positive $\epsilon\geq 0$ and each multi-index $q$ we introduce the following volume element
\begin{equation}
d\mu^{(\ep)}_q:= \frac{\prod_{\alpha\in I}(\ep^2+ |\sigma_\alpha|^2)^{q_\alpha}}{\prod_{\beta\in J}(\ep^2+ |s_\beta|^2)^{b_\beta}}dV_\omega
\end{equation}
where $dV_\omega$ is the volume element corresponding to the reference metric $\omega$.
Also, for each positive real number $p\leq 2$ we define
the multi-index $q_p$ whose components are
\begin{equation}
\left(1-\frac{p}{2}\right)q_\alpha.
\end{equation}

\noindent Then we have the following statements.

\begin{prop}
\label{WS} There exists a constant $C> 0$ independent of $\ep$ (but depending on everything else)
such that for every smooth function $f$ on $X$ we have
\begin{equation}\label{WS1}
\left(\int_X|f|^{\frac{2np}{2n-p}}d\mu^{(\ep)}_q\right)^{\frac{2n-p}{2np}}\leq
C\left(\int_X|\nabla_\ep f|^pd\mu^{(\ep)}_{q_p} +  \int_X|f|^pd\mu^{(\ep)}_{q_p} \right)^{\frac{1}{p}}
\end{equation}
where $1\leq p< 2$ is any real number, and
the gradient $\nabla_\ep$ corresponds to the
$\ep$-regularization of a fixed metric with conic singularities along the divisor
$\displaystyle \sum_{\beta\in J}b_\beta B_\beta$.
\end{prop}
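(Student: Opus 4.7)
The plan is to reduce the weighted estimate to the classical (uniform in $\ep$) $L^p$-Sobolev inequality for the regularized conic measure $d\mu_0^{(\ep)}$ along $B$---which is a standard consequence of the constructions underlying the uniform Ricci lower bound of the regularized conic metric---and then to dispose of the error term via a weighted Hardy inequality in the two real directions transverse to each component of $E$.

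I would set $W:=\prod_{\alpha\in I}(\ep^2+|\sigma_\alpha|^2)^{q_\alpha}$ and $q^*:=\frac{2np}{2n-p}$, and apply the standard conic Sobolev inequality to the substituted function $g:=fW^{1/q^*}$. Since $|g|^{q^*}d\mu_0^{(\ep)}=|f|^{q^*}d\mu_q^{(\ep)}$, the left-hand side of \eqref{WS1} is exactly $\|g\|_{L^{q^*}(d\mu_0^{(\ep)})}$, hence is bounded by $C\bigl(\|\nabla_\ep g\|_{L^p(d\mu_0^{(\ep)})}+\|g\|_{L^p(d\mu_0^{(\ep)})}\bigr)$. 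The key algebraic fact is $p/q^*=1-\frac{p}{2n}\geq 1-\frac{p}{2}$ for every $n\geq 1$, whence after normalizing the reference metrics $h_\alpha$ so that $|\sigma_\alpha|^2\leq 1$ on $X$ one gets the pointwise bound $W^{p/q^*}\leq C\,W^{1-p/2}$. This at once controls $\|g\|_{L^p(d\mu_0^{(\ep)})}$ by $C\|f\|_{L^p(d\mu_{q_p}^{(\ep)})}$. For the gradient term I would use $|\nabla_\ep g|^p\leq C\bigl(W^{p/q^*}|\nabla_\ep f|^p+|f|^p|\nabla_\ep W^{1/q^*}|^p\bigr)$ together with the elementary estimate $|\nabla_\ep W^{1/q^*}|\leq C\,W^{1/q^*}\sum_\alpha w_\alpha^{-1/2}$, where $w_\alpha:=\ep^2+|\sigma_\alpha|^2$; this follows from $|\nabla_\ep w_\alpha|\leq 2|\sigma_\alpha|\cdot|\nabla_\ep\sigma_\alpha|\leq C\sqrt{w_\alpha}$. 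The main contribution $W^{p/q^*}|\nabla_\ep f|^p$ is absorbed by the same pointwise inequality, leaving only the singular terms $\sum_\alpha\int_X|f|^p\,W^{p/q^*}w_\alpha^{-p/2}\,d\mu_0^{(\ep)}$.

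Each such singular term is dealt with by a weighted Hardy inequality in the direction transverse to $E_\alpha$. In a chart $\Omega_j$ from \eqref{E1} with $E_\alpha\cap\Omega_j=\{z^\alpha_j=0\}$, the measure $d\mu_0^{(\ep)}$ factors on $\CC_{z^\alpha_j}\times\CC^{n-1}$ as a product (up to a smooth bounded function). Fubini then reduces the matter to the planar inequality
\[
\int_{\CC}|u|^p(\ep^2+|z|^2)^{c}\,dA\leq C\int_{\CC}\bigl(|\partial_z u|^p+|u|^p\bigr)(\ep^2+|z|^2)^{c+p/2}\,dA,
\]
with $c:=q_\alpha p/q^*-p/2$ and $C$ uniform in $\ep\in(0,1]$. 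This is a standard weighted Hardy inequality, valid precisely because $p<2$ (integrability near $z=0$ being automatic as $c>-1$, which holds since $q_\alpha\geq 0$ and $p<2$). Because the regularized conic metric is non-degenerate along each $E_\alpha$, one has $|\partial_{z^\alpha_j}f|\leq C|\nabla_\ep f|$, and one more application of $W^{p/q^*}\leq C\,W^{1-p/2}$ then provides the required bound. Gluing through a partition of unity subordinate to $(\Omega_j)_j$ produces the global inequality \eqref{WS1}; the cutoff cross-terms $|\nabla\chi|^p|f|^p$ generated in this step are absorbed into the $\int|f|^p\,d\mu_{q_p}^{(\ep)}$ part of the right-hand side.

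The main obstacle is precisely the uniform-in-$\ep$ planar weighted Hardy inequality above; it is also the sole place where the hypothesis $p<2$ is used in an essential way, since in real dimension $2$ the classical Hardy inequality $\int_{\CC}|u|^p|z|^{-p}\,dA\leq C\int_{\CC}|\nabla u|^p\,dA$ is available exactly when $p<2$.
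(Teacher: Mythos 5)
Your argument is correct, but it follows a genuinely different route from the one in the paper. The paper proves the local model estimate by an explicit change of variables: it introduces the $\varepsilon$-family of diffeomorphisms $F_\varepsilon(t) = (1+\varepsilon^2)^{-q/2}(\varepsilon^2+|t|^2)^{q/2}t$, whose real Jacobian is comparable, uniformly in $\varepsilon$, to $(\varepsilon^2+|t|^2)^q$; pushing $f$ through this map absorbs the weight into the Jacobian and reduces \eqref{WS1} to the ordinary Sobolev inequality on the polydisk, with no cross term to control. Your substitution $g=fW^{1/q^*}$ into the conic Sobolev inequality is a more systematic weight-shifting scheme; it works, but the cross term $|f|^p|\nabla_\varepsilon W^{1/q^*}|^p$ is genuinely singular (of size $W^{p/q^*}\sum_\alpha w_\alpha^{-p/2}$) and forces you to bring in the uniform planar Hardy inequality as an extra ingredient. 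That inequality is indeed true with an $\varepsilon$-independent constant once $c>-1$: one can write $\nabla\cdot\bigl(z(\varepsilon^2+|z|^2)^c\bigr) = (\varepsilon^2+|z|^2)^{c-1}\bigl(2\varepsilon^2+(2+2c)|z|^2\bigr)\ge \min(2,2+2c)(\varepsilon^2+|z|^2)^c$, integrate by parts against $|u|^p$, and apply H\"older; the $|u|^p$ term on the right is not even needed. Your treatment of the partition-of-unity cross-terms and the exponent check $p/q^*=1-p/(2n)\ge 1-p/2$ are also in order. In short, both approaches succeed; the paper's change of coordinates is shorter and sidesteps Hardy altogether, while your weight-shift pays for its generality with an additional (self-contained) lemma.
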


As we can see, there is an important difference
between the Proposition \ref{WS} and the standard \emph{weighted Sobolev inequalities}: the volume element in the left hand side of \eqref{WS1} is not the same as the one in the right hand site term.
\smallskip

\noindent In a similar vein, we have the next version of the Poincar\'e inequality.

\begin{prop}
\label{WP} There exists a constant $C> 0$ as above such that for any smooth function $f$ on $X$
we have
\begin{equation}\label{WP1}
\int_X\left|f- \VM_\mu(f)\right|^p d\mu^{(\ep)}_q\leq C\int_X|\nabla_\ep f|^p d\mu^{(\ep)}_{q_p}
\end{equation}
where $p\geq 1$ is a real number, and where we use the notation
\begin{equation}\label{WP2}
\VM_\mu(f):= \int_Xfd\mu^{(\ep)}_q.
\end{equation}
\end{prop}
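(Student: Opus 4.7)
The plan is to derive this weak Poincaré inequality from the weak Sobolev inequality of Proposition~\ref{WS} combined with a Rellich-Kondrachov type compactness-contradiction argument, closely mirroring the classical way one passes from Sobolev to Poincaré on bounded Euclidean domains.

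First I would apply Proposition~\ref{WS} to the function $g := f - \VM_\mu(f)$, exploiting that $\nabla_\ep \VM_\mu(f) = 0$, that the total mass $\mu^{(\ep)}_q(X)$ is uniformly bounded in $\ep$ (the zero factors being bounded and the pole factors integrable since $b_\beta<1$), and that $\frac{2np}{2n-p} > p$ so that Hölder controls the $L^p(d\mu^{(\ep)}_q)$-norm by the $L^{p^*}(d\mu^{(\ep)}_q)$-norm. In the range $p\in[1,2)$ this yields an inequality of the form
\[
\int_X |g|^p\, d\mu^{(\ep)}_q \;\leq\; C\left(\int_X |\nabla_\ep f|^p\, d\mu^{(\ep)}_{q_p} \;+\; \int_X |g|^p\, d\mu^{(\ep)}_{q_p}\right),
\]
which is precisely \eqref{WP1} modulo an unwanted lower-order term $\|g\|_{L^p(d\mu^{(\ep)}_{q_p})}$ on the right. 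The range $p\geq 2$ should then follow either by interpolation or by applying the $p=2$ instance to $|g|^{p/2}\mathrm{sign}(g)$ in the spirit of Saloff-Coste.

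To absorb the unwanted lower-order term I would argue by contradiction. If the inequality failed uniformly in $\ep$, one could find sequences $\ep_k\to \ep_\infty\in[0,\ep_0]$ and smooth $f_k$ with $\VM_{\mu_k}(f_k)=0$, $\int|f_k|^p\, d\mu^{(\ep_k)}_q = 1$ and $\int|\nabla_{\ep_k} f_k|^p\, d\mu^{(\ep_k)}_{q_p}\to 0$. By Proposition~\ref{WS} the sequence is uniformly bounded in $L^{p^*}(d\mu^{(\ep_k)}_q)$ with $p^*=\tfrac{2np}{2n-p}>p$, hence so is its $L^p(d\mu^{(\ep_k)}_{q_p})$-norm (after a Hölder estimate controlling the Radon-Nikodym derivative $d\mu^{(\ep)}_{q_p}/d\mu^{(\ep)}_q = \prod(\ep^2+|\sigma_\alpha|^2)^{-\frac{p}{2}q_\alpha}$). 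Extracting a subsequential limit $f_\infty$ in an appropriate weighted $L^p$-space adapted to $\mu^{(\ep_\infty)}_q$, the gradient bound forces $\nabla f_\infty \equiv 0$ on $X\ssm\Supp(B+E)$, hence $f_\infty$ is constant on each connected component of this open set; the normalization $\VM(f_\infty)=0$ then yields $f_\infty\equiv 0$, contradicting $\|f_\infty\|_{L^p(d\mu^{(\ep_\infty)}_q)}=1$.

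The hard part is precisely the compactness step when $\ep_\infty = 0$: the limit measure $d\mu^{(0)}_q$ simultaneously vanishes along $E$ and has integrable poles along $B$, so no off-the-shelf Rellich-Kondrachov embedding applies. I would handle it by exploiting the improved integrability afforded by $p^*>p$ together with the monotone convergence $(\ep^2+|\sigma_\alpha|^2)^{q_\alpha}\searrow |\sigma_\alpha|^{2q_\alpha}$ as $\ep\to 0$. On any compact subset of $X\ssm \Supp(B+E)$ the regularized conic metrics underlying $d\mu^{(\ep)}_{q_p}$ are uniformly quasi-isometric to a smooth reference metric (using the explicit conic model in \eqref{coniccontr}), so classical Rellich-Kondrachov yields strong $L^p$ convergence of a subsequence; a tail estimate in a small tubular neighborhood of $\Supp(B+E)$, based on the uniform $L^{p^*}$ bound together with the explicit decay of the weights, then rules out any loss of mass in the limit. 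This philosophy is the same one that underlies the proof of Proposition~\ref{WS} itself.
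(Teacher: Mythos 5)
Your route is genuinely different from the paper's. The paper proves the two-point inequality
\[
\int_{X\times X}|f(x)-f(y)|^p \,d\mu_q^{(\ep)}(x)\,d\mu_q^{(\ep)}(y)\leq C\int_X|\nabla_\ep f|^p\,d\mu_{q_p}^{(\ep)}
\]
directly: the local version comes from the explicit change of variables $F_\ep$ together with the fundamental theorem of calculus along straight segments (or conic geodesics when $B\neq 0$), and the global version follows by chaining over a well-chosen cover. Because the Jacobian of $F_\ep$ enters multiplicatively at exactly the right power, the gradient side naturally picks up the measure $\mu_{q_p}^{(\ep)}$; no lower-order term appears, so no compactness is needed and the constant is manifestly $\ep$-independent. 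You instead try to bootstrap from Proposition~\ref{WS} and then kill the unwanted term $\int|g|^p\,d\mu_{q_p}^{(\ep)}$ by a Rellich--Kondrachov-type contradiction argument.

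That contradiction argument breaks at its first step, and the obstruction is precisely the mismatch of measures that the paper's direct approach is engineered to avoid. The lower-order term in Proposition~\ref{WS} is with respect to $d\mu_{q_p}^{(\ep)}$, not $d\mu_q^{(\ep)}$. Since $(1-\tfrac{p}{2})q_\alpha < q_\alpha$ for $p\in(0,2)$ and $\ep^2+|\sigma_\alpha|^2$ is bounded above, one has $d\mu_q^{(\ep)}\leq C\,d\mu_{q_p}^{(\ep)}$ pointwise, i.e.\ $\mu_{q_p}^{(\ep)}$ is the \emph{bigger} measure near $\Supp(E)$. Hence your normalization $\int|f_k|^p\,d\mu_q^{(\ep_k)}=1$ does \emph{not} give a bound on $\int|f_k|^p\,d\mu_{q_p}^{(\ep_k)}$, which is what Proposition~\ref{WS} requires on its right-hand side before it can deliver the $L^{p^*}(\mu_q)$ bound you want to exploit. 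Your proposed patch --- a H\"older estimate controlling $\|g_k\|_{L^p(\mu_{q_p})}$ in terms of $\|g_k\|_{L^{p^*}(\mu_q)}$ via the Radon--Nikodym derivative $d\mu_{q_p}^{(\ep)}/d\mu_q^{(\ep)}=\prod_\alpha(\ep^2+|\sigma_\alpha|^2)^{-pq_\alpha/2}$ --- needs this density raised to the conjugate exponent $\tfrac{p^*}{p^*-p}=\tfrac{2n}{p}$ to be $\mu_q^{(\ep)}$-integrable uniformly in $\ep$. Computing, this asks that $\prod_\alpha(\ep^2+|\sigma_\alpha|^2)^{q_\alpha(1-n)}$ be uniformly in $L^1$, i.e.\ $q_\alpha(1-n)>-1$, i.e.\ $q_\alpha<\tfrac{1}{n-1}$ for each $\alpha$. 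This fails for the exponents actually used downstream in Theorem~\ref{L2N}, where one iterates up to $q_{n+1}=\tfrac{(2n)!}{n!^2}\,e$ and the coefficients $e_\alpha$ of the divisor $E$ carry no such smallness. So the H\"older step is false in the generality needed, the $L^{p^*}(\mu_q)$ bound never materializes, and with it the whole compactness setup collapses. Beyond this, even granting the bound, the required compact embedding would have to be from an $L^{p^*}(\mu_q)$-type space into $L^p(\mu_{q_p})$, which is a stronger target; your tail estimate near $\Supp(B+E)$ only controls mass with respect to $\mu_q^{(\ep)}$. The paper's chaining argument sidesteps both of these difficulties entirely.
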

\smallskip

\noindent We first prove the statement \ref{WS}; the arguments which will follow have
been ``borrowed" from the book \cite[Chap.~15]{juha}.
\begin{proof} [Proof of Proposition \ref{WS}] We first assume that $B=0$ because the arguments for the general case are practically identical.

A first remark is that it is enough to consider the local version of the statement, as follows.
Let $\Omega$ be one of the domains covering $(X, E)$ as mentioned in \eqref{E1}; we denote by
$(z_1,\dots , z_n)$ the corresponding coordinate system.  We will assume that we have
\begin{equation}
\Omega= \prod_j(|z_j|< 1)
\end{equation}
and that the function $f$ has compact support in $\Omega$.

In terms of this local setting, the quantity to be evaluated becomes

\begin{equation}
\int_\Omega|f|^{\frac{2np}{2n-p}}\prod_{\alpha=1}^d(\ep^2+ |z_\alpha|^2)^{q_\alpha}d\lambda
\end{equation}
(since $b_i=0$).
Let $\B:= (|t|< 1)\subset \CC$ be the unit disk in the complex plane. We consider the function
\begin{equation}\label{wsp2}
F_\ep(t)= \frac{(\ep^2+ |t|^2)^{q/2}}{(1+ \ep^2)^{q/2}}t
\end{equation}
where $q>0$ is a real number. It turns out that $F_\ep$ is a diffeomorphism and the square
of the absolute value of its Jacobian $\displaystyle dF_\ep\wedge d\overline F_\ep$ verifies the
inequality
\begin{equation}
C^{-1}(\ep^2+ |t|^2)^{q}\leq \frac{dF_\ep\wedge d\overline F_\ep}{dt\wedge d\overline t}\leq
C(\ep^2+ |t|^2)^{q}
\end{equation}
where $C$ is a constant independent of $\ep$ (it can be explicitly computed). Let $G_\ep$ be the
inverse of $F_\ep$. The implicit function theorem shows that we have
\begin{equation}
|dG_\ep(t) |\leq \frac{C}{(\ep^2+ |t|^2)^{q/2}}.
\end{equation}

\noindent By the change of variables formula we have
\begin{equation}\label{wsp3}
\int_\Omega|f(z)|^{\frac{2np}{2n-p}}\prod_{\alpha=1}^d(\ep^2+ |z_\alpha|^2)^{q_\alpha}d\lambda\leq C
\int_\Omega|\widetilde f(w)|^{\frac{2np}{2n-p}}d\lambda(w)
\end{equation}
where by definition we set
\begin{equation}\label{WS2}
\widetilde f(w):= f\big(G_\ep(w_1),\dots , G_\ep(w_d), w_{d+1},\dots, w_n\big);
\end{equation}
it is a function defined on the ``same" poly-disk $\Omega$, and it has compact
support.

Therefore, by the usual version of the Sobolev inequality we obtain

\begin{equation}\label{}
\left(\int_\Omega|\widetilde f(w)|^{\frac{2np}{2n-p}}d\lambda(w)\right)^{\frac{2n-p}{2n}}
\leq C\int_\Omega|\nabla \widetilde f(w)|^p d\lambda(w).
\end{equation}
We use the relation \eqref{WS2}, together with the change of coordinates
$\displaystyle w_\alpha= F_\ep(z_\alpha)$ for $\alpha= 1,\dots d$ and we infer that we have
\begin{equation}
\int_\Omega|\nabla \widetilde f(w)|^p d\lambda(w)\leq C
\int_\Omega|\nabla f(z)|^{p}\prod_{\alpha=1}^d(\ep^2+ |z_\alpha|^2)^{q_\alpha(1- \frac{p}{2})}d\lambda.
\end{equation}
In conclusion we have
\begin{equation}
\left(\int_\Omega|f(z)|^{\frac{2np}{2n-p}}
\prod_{\alpha=1}^d(\ep^2+ |z_\alpha|^2)^{q_\alpha}d\lambda\right)^{\frac{2n-p}{2n}}\leq C
\int_\Omega|\nabla f(z)|^{p}\prod_{\alpha=1}^d(\ep^2+ |z_\alpha|^2)^{q_\alpha(1- \frac{p}{2})}d\lambda.
\end{equation}
that is to say, we have established the local version of the inequality
\ref{WS}. The general case follows by a partition of unit argument which we skip.
\end{proof}
\medskip

\noindent The same scheme of proof applies to Proposition \ref{WP}: we will
first show that the local version of this statement holds by using a
change of coordinates and the classical version of Poincar\'e inequality,
and then we show that the global version \eqref{WP1} is true by a
well-chosen covering of $X$.
\begin{proof} The inequality \eqref{WP1} is easily seen to follow provided that
we are able to establish
the following relation

\begin{equation}\label{eq40}
  \int_{X\times X}|f(x)- f(y)|^p d\mu_q^{(\ep)}(x)d\mu_q^{(\ep)}(y)\leq C\int_X|\nabla f|^p
  \mu_{q_p}^{(\ep)}
\end{equation}
for any $1\leq p\leq 2$. This is very elementary and we will not provide any additional explanation.

Assume that we have a covering of $X$
\begin{equation}\label{eq41}
X= \bigcup_i U_i
\end{equation}
where each $U_i$ is a coordinate open set. In order to obtain a bound as
in \eqref{eq40}, it would be enough to analyze the quantities
\begin{equation}\label{eq42}
 \int_{U_i\times U_j}|f(x)- f(y)|^p d\mu_q^{(\ep)}(x)d\mu_q^{(\ep)}(y)
\end{equation}
for each couple of indexes $i, j$ which is what we do next.
\smallskip

\noindent To start with, let $\Omega$ be one of the coordinate sets $U_i$;
we will show next that the following local version of \eqref{WP1} holds true
\begin{equation}\label{eq43}
  \int_{\Omega\times \Omega}|f(x)- f(y)|^p d\mu_q^{(\ep)}(x)d\mu_q^{(\ep)}(y)\leq C\int_\Omega|\nabla f|^p \mu_{q_\alpha}^{(\ep)}.
\end{equation}
We proceed as in the previous proof: we have
\begin{equation}\label{eq44}
\int_{\Omega\times \Omega}|f(x)- f(y)|^p d\mu_q^{(\ep)}(x)d\mu_q^{(\ep)}(y)\leq C
\int_{\Omega\times\Omega}\left|\widetilde f(z)- \widetilde f(w)\right|^pd\lambda(z,w)
\end{equation}
by a change of coordinates as indicated in \eqref{wsp2}. Now we have
\begin{equation}\label{eq45}
\widetilde f(z)- \widetilde f(w)= \int_0^1\frac{d}{dt}\widetilde f\left((1-t)z+ tw)\right) dt
\end{equation}
and it follows that we have
\begin{equation}\label{eq46}
  \int_{\Omega\times \Omega}\left|\widetilde f(z)- \widetilde f(w)\right|^p
  d\lambda(z,w)\leq
C\int_0^1dt \int_{\Omega\times \Omega}|\nabla \widetilde f\left((1-t)z+ tw)\right)|^p d\lambda(z,w),
\end{equation}
where the constant $C>0$ in \eqref{eq46} depends on the diameter of $\Omega$
measured with respect to the Euclidean metric.

Then we invoke the usual trick: we split the integral above in two --the
first part is
as follows
\begin{equation}\label{eq47}
\int_0^{1/2}dt \int_{\Omega\times \Omega}|\nabla \widetilde f\left((1-t)z+ tw)\right)|^p d\lambda(z,w)\leq C \int_{\Omega}|\nabla \widetilde f\left(z)\right)|^p d\lambda(z)
\end{equation}
where up to a numerical constant, $C$ in \eqref{eq47} only depends on the volume of $\Omega$. We have a similar estimate for the integral corresponding to
the interval [1/2, 1], so all in all we infer
\begin{equation}\label{eq48}
\int_{\Omega\times \Omega}\left|\widetilde f(z)- \widetilde f(w)\right|^p
d\lambda(z,w)\leq
C \int_{\Omega}|\nabla \widetilde f\left(z)\right)|^p d\lambda(z).
\end{equation}
Changing the coordinates back, together with the considerations in the proof of weak Sobolev inequality
show that \eqref{eq43} is proved.
\smallskip

\noindent The general case follows by choosing a covering $(U_j)$
of $X$, such that the following properties
are satisfied.
\begin{enumerate}

\item[(1)] If $U_p\cap U_q\neq \emptyset$ and if at least one of them intersects the support of the divisor $E$, then the union $U_p\cup U_q$ is contained in a coordinate set endowed with coordinates adapted to $(X, E)$ (as in the beginning of this section).
\smallskip

\item[(2)] If $U_p\cap U_q\neq \emptyset$ and if neither of $U_i$ or $U_j$ intersects $\Supp (E)$, then
  the union $U_p\cup U_q$ is contained in a coordinate ball which is disjoint of $\Supp (E)$.\smallskip

\item[(3)] The $ d\mu_q^{(\ep)}$-volume of the coordinate sets containing $U_i\cup U_j$ in (1) and (2) is bounded from above and below by constants which are independent of $\varepsilon$.    

\end{enumerate}

\noindent It is clear that such cover exists, and we fix one denoted by
$\Lambda$ for the rest of the proof. Note that this cover is independent of
$\varepsilon$.
Next, given any couple
$\displaystyle U_i, U_j$ of sets belonging to $\Lambda$, we consider a
collection
\begin{equation}\label{eq49}
\Xi_{ij}= \left(\Omega_1,\Omega_2,\dots ,\Omega_N\right)
\end{equation}
of elements of $\Lambda$ such that the following properties are
verified.
\begin{enumerate}

\item[(a)] We have $\Omega_1:= U_i$ and $\Omega_N:= U_j$, and all of the intermediate
$\Omega$'s are elements of $\Lambda$.
\smallskip

\item[(b)] For any $r=1,\dots N-1$ we have $\displaystyle
\Omega_r\cap \Omega_{r+1}\neq\emptyset$.
\end{enumerate}

\noindent Again, there are many choices for such $\Xi_{ij}$, but we just
pick one of them for each pair of indexes $(i, j)$.
\smallskip

We are now ready to analyze the quantities \eqref{eq42}: for each
couple $(i, j)$ we consider the collection $\Xi_{ij}$. Given
\begin{equation}\label{eq50}
  (x_1,\dots, x_N)\in \Omega_1\times\dots\times \Omega_N
\end{equation}
we have
\begin{equation}\label{eq51}
  |f(x_1)- f(x_N)|^\alpha\leq C\sum_q|f(x_q)- f(x_{q+1})|^\alpha
\end{equation}
for some numerical constant $C>0$.

\noindent We consider now the following expression
\begin{equation}\label{eq52}
\int_{\Omega_1\times\dots\times \Omega_N} |f(x_1)- f(x_N)|^p d\mu_q^{(\ep)}(x_1)\dots
d\mu_q^{(\ep)}(x_N);
\end{equation}
on one hand, up to a constant this is simply \eqref{eq42}. One the other hand \eqref{eq52}
is bounded from above by
\begin{equation}\label{eq53}
 C\sum_q\int_{\Omega_1\times\dots\times \Omega_N} |f(x_q)- f(x_{q_1})|^p d\mu_q^{(\ep)}(x_1)\dots
d\mu_q^{(\ep)}(x_N)
\end{equation}
The last observation is that each term of the sum \eqref{eq53} is of type \eqref{eq43}
--here we are using the properties (1)-(3) and (a), (b) above--
for which we have already shown the desired Poincar\'e inequality.
This ends the proof of the case $B=0$.
\medskip

\noindent We will not detail the proof of the general statement, because
the arguments are identical to the ones already given. The
only change is that we will work with geodesics with respect to
the model conic metric
\begin{equation}\label{wsp4}
  \sqrt{-1}\sum_{\alpha\in J}\frac{dz_\alpha\wedge d\ol z_\alpha}{(\ep^2+ |z_\alpha|^2)^{b_\alpha}}+ \sqrt{-1}\sum_{\alpha\not\in J}dz_\alpha\wedge d\ol z_\alpha
\end{equation}
instead of straight lines $(1-t)x+ ty$. The same proof works because the Ricci curvature
of the metric \eqref{wsp4} is bounded from below by some constant independent of $\ep$. For a complete treatment of this point we refer to \cite{SC}, pages 177-179.
\end{proof}

\subsection{Lie derivative of fiberwise Monge-Amp\`ere Equations}
In this subsection we consider the restriction of our initial family $p$ to a generic disk contained in the base, together with a family of Monge-Amp\`ere equations of its fibers. Let $\DD\subset Y$ be a one-dimensional germ of submanifold  contained in a coordinate set of $Y$, and let $\cX:= p^{-1}(\DD)$ (notations as in Theorem \ref{thmc}).

The resulting map $p:\cX\to \DD$ will be a proper submersion, provided that $\DD$ is generic. We recall that the
total space $(\cX, \omega)$ of $p$ is a K\"ahler manifold. We denote by $t$ a coordinate on the unit disk $\DD$, and let
\begin{equation}\label{eq1}
v= \frac{\partial}{\partial t}+ v^\alpha \frac{\partial}{\partial z^\alpha}
\end{equation}
be the local expression of a smooth vector field which projects into $\displaystyle \frac{\partial}{\partial t}$.

Another piece of data is the following fiberwise Monge-Amp\`ere equation
\begin{equation}\label{eq2}
(\omega+ dd^c\varphi)^n= e^{\lambda \varphi+ f}\omega^n
\end{equation}
on each $\cX_t$. Here $\lambda\geq 0$ is a positive real number, and $f$ is a smooth function on $\cX$.
We can write this globally as follows
\begin{equation}\label{eq3}
(\omega+ dd^c\varphi)^n\wedge \sqrt{-1}dt\wedge d\overline t=  e^{\lambda \varphi+ f}\omega^n \wedge \sqrt{-1}dt\wedge d\overline t
\end{equation}
on $\cX$, where the meaning of $dd^c$ and of $\varphi$ is not the same as in \eqref{eq2}, but...
\smallskip

We take the Lie derivative $\cL_v$ of \eqref{eq3} with respect to the vector field $v$, and then restrict to a fiber
$\cX_t$. The Lie derivative of the left-hand side term of \eqref{eq3} equals

\begin{equation}\label{eq7}
n\cL_v(\omega+ dd^c\varphi)\wedge (\omega+ dd^c\varphi)^{n-1}\wedge \sqrt{-1}dt\wedge d\overline t
\end{equation}
because we have $\cL_v\left(\sqrt{-1}dt\wedge d\overline t\right)= 0$, given the expression
\eqref{eq1}.

The form $\omega+ dd^c\varphi$ is closed, hence by Cartan formula we have
\begin{equation}\label{eq8}
\cL_v(\omega+ dd^c\varphi)= d\big( i_v\cdot (\omega+ dd^c\varphi)\big)
\end{equation}
where $\displaystyle {i_v}\cdot \omega$ is the contraction of $\omega$ with respect to the vector field $v$. We evaluate next the quantity
\begin{equation}\label{eq9}
d\big({i_v}\cdot dd^c\varphi\big)\wedge \sqrt{-1}dt\wedge d\overline t
\end{equation}
by a point-wise computation, as follows. With respect to the local coordinates as in \eqref{eq1}, we write
\begin{equation}\label{eq10}
dd^c\varphi= \varphi_{t\ol t}\sqrt{-1}dt\wedge d\overline t+ \varphi_{t\ol \alpha}
\sqrt{-1}dt\wedge dz^{\ol \alpha}+ \varphi_{\beta \ol t}
\sqrt{-1}dz^\beta\wedge d\ol t+  \varphi_{\beta \ol \alpha}\sqrt{-1}dz^\beta\wedge dz^{\ol \alpha};
\end{equation}
in the expression above we are using the Einstein convention. Then we have
\begin{equation}\label{eq11}
d\big({i_v}\cdot dd^c\varphi\big)\equiv \big(\varphi_{t\beta\ol \alpha} + \varphi_{\gamma\beta\ol \alpha}
v^\gamma+  \varphi_{\gamma\ol \alpha}v^\gamma_\beta\big)dz^\beta\wedge dz^{\ol \alpha}
\end{equation}
where $\equiv$ means that we are only consider the terms of (1,1)-type which do not contain $dt$
or its conjugate.

On the other hand, the coefficients of the Hessian of the function
\begin{equation}\label{eq12}
v(\varphi)= \varphi_t+ \varphi_\gamma v^\gamma
\end{equation}
in the fibers direction are equal to
\begin{equation}\label{eq13}
v(\varphi)_{\beta\ol \alpha}= \varphi_{t\beta\ol \alpha} + \varphi_{\gamma\beta\ol \alpha}
v^\gamma+  \varphi_{\gamma\ol \alpha}v^\gamma_\beta+ \varphi_{\gamma\beta}v^\gamma_{\ol\alpha}
+ \varphi_\gamma v^\gamma_{\beta\ol\alpha}.
\end{equation}
The first three terms in the expression \eqref{eq13} are identical to those in \eqref{eq11}. As for the
last two terms, they can be expressed intrinsically as follows
\begin{equation}\label{eq14}
(\varphi_{\gamma\beta}v^\gamma_{\ol\alpha}
+ \varphi_\gamma v^\gamma_{\beta\ol\alpha})dz^\beta\wedge dz^{\ol \alpha}=
\partial\left(\dbar v\cdot \varphi\right).
\end{equation}
Here $\dbar v$ is a $(0,1)$-form with values in $\displaystyle T_{\cX_t}$ and then
$\dbar v\cdot \varphi$
is a form of (0,1) type on $\cX_t$.
\smallskip

On the other hand, if we denote by $\Delta_{\varphi}= \Tr_\varphi\sqrt{-1}\ddbar$ the Laplace operator corresponding to
the metric $\omega_\varphi:= \omega+ dd^c\varphi$ on the fibers of $p$, then we can rewrite the
equation \eqref{eq8} as follows
\begin{equation}\label{eq4}
\left(\Delta_{\varphi} v(\varphi) - \Tr_\varphi \partial \big(\dbar v\cdot \varphi\big)+
\Psi_{\varphi, v}\right)\omega_\varphi^n\wedge \sqrt{-1}dt\wedge d\overline t.
\end{equation}
In the expression \eqref{eq4} we denote by $\Tr_\varphi$ the trace with respect to
$\omega_\varphi$ on $\cX_t$, and we denote by $\Psi_{\varphi, v}$ the function on $\cX$ such that
the equality
\begin{equation}\label{eq15}
\Psi_{\varphi, v}\,\omega_\varphi^n\wedge \sqrt{-1}dt\wedge d\overline t= \cL_v(\omega)\wedge
\omega_\varphi^{n-1}\wedge \sqrt{-1}dt\wedge d\overline t
\end{equation}
holds on $\cX$.
\medskip

As for the right hand side of \eqref{eq3}, the expression of the Lie derivative
 reads as follows
\begin{equation}\label{eq16}
\left(\lambda v(\varphi)+ v(f)+ \Psi_v\right)\omega_\varphi^{n}\wedge \sqrt{-1}dt\wedge d\overline t
\end{equation}
where -as before- the function $\Psi_v$ is defined by the equality
\begin{equation}\label{eq17}
\Psi_{v}\,\omega^n\wedge \sqrt{-1}dt\wedge d\overline t= \cL_v(\omega)\wedge
\omega^{n-1}\wedge \sqrt{-1}dt\wedge d\overline t.
\end{equation}
\medskip

\noindent In conclusion, for each $t\in \DD$ we obtain the equality
\begin{equation}\label{eq18}
\Delta_{\varphi} v(\varphi) - \Tr_\varphi \partial \big(\dbar v\cdot \varphi\big)+
\Psi_{\varphi, v}= \lambda v(\varphi)+ v(f)+ \Psi_v
\end{equation}
which is the identity we intended to obtain in this subsection.\qed

\subsection{Regularity in transverse directions}

In this section we will apply the results above in order to
analyze the transversal regularity of the solution of the equation
\begin{equation}\label{eq19}
(\omega+ dd^c\varphi_t)^n= e^{\lambda \varphi+ f}\frac{\prod_{i\in I} |\sigma_i|^{2e_i}}{\prod_{j\in J} |s_j|^{2b_j}}\omega^n
\end{equation}
on $\cX_t$. Here $\lambda\geq 0$ is a positive real, and the parameters
$e_i, b_j$ are chosen as above. In case we have $\lambda= 0$, the normalization we choose for the solution is
\begin{equation}\label{eq20}
\int_{\cX_t}\varphi_t\omega_{\varphi_t}^n= 0.
\end{equation}
The function $f$ in \eqref{eq19} is supposed to be smooth on the total space $\cX$.
\smallskip

\noindent We consider the family of approximations of \eqref{eq19}
\begin{equation}\label{eq23}
  (\omega+ dd^c\varphi_\ep)^n= e^{\lambda \varphi_\ep+ f}\frac{\prod_{i\in I}
    (\ep^2 +|\sigma_i|^{2})^{e_i}}{\prod_{j\in J}(\ep^2+ |s_j|^2)^{b_j}}\omega^n
\end{equation}
on $\cX_t$. By general results in MA theory, the function $\varphi_\ep$ obtained by glueing the
fiberwise solutions of \eqref{eq23} is smooth. We will analyze in the next subsections the
uniformity with respect to $\ep$ of several norms of $\varphi_\ep$.
\medskip

\noindent We recall the following important result whose origins can be found in \cite{Yau78}.

\begin{theo}\label{Yau}
 For any strictly smaller disk $\DD^\prime\subset \DD$
there exists a constant $C> 0$ such that we have
\begin{equation}\label{eq21}
\Vert \varphi_\ep\Vert_{\cC^1(\cX_t)}\leq C
\end{equation}
for all $t\in \DD^\prime$, where the $\cC^1$ norm above is with respect to a fixed metric
which is quasi-isometric to \eqref{wsp4}.
\end{theo}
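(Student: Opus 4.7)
The plan is to adapt Yau's classical strategy, combined with the refinements developed in the conic setting in \cite{GP}, to the present degenerate framework where zeros (along $E$) and conic poles (along $B$) appear simultaneously in the right-hand side of \eqref{eq23}. The argument goes through the standard three-step chain: uniform $L^\infty$ bound, uniform Laplacian bound against a regularized model conic metric, and finally a uniform gradient bound — all of them uniform in $\ep$ and in $t\in \DD'$.

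\textbf{Step 1: uniform $L^\infty$ bound.} Since each $b_j\in [0,1)$, the factor $(\ep^2+|s_j|^2)^{-b_j}$ has a uniform $L^p$ bound for some $p>1$ independent of $\ep$, while the zero factors $(\ep^2+|\sigma_i|^2)^{e_i}$ are uniformly bounded. Hence the density on the right-hand side of \eqref{eq23} lies in a bounded set of $L^p$ independently of $\ep$ and $t$. The family version of Ko\l odziej's estimate \cite{DDGHKZ} then yields $\|\vpe\|_{L^\infty(\cX_t)}\leq C$, using the normalization \eqref{eq20} when $\lambda=0$ and the maximum principle when $\lambda>0$.

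\textbf{Step 2: Laplacian estimate.} Let $\om_P^\ep$ be the $\ep$-regularization of the model conic metric \eqref{wsp4}; by \cite{GP} its holomorphic bisectional curvature is bounded above uniformly in $\ep$. Apply the Aubin--Yau inequality to
\begin{equation*}
H:=\log\Tr_{\om_P^\ep}\omvpe \,-\, A\vpe
\end{equation*}
on each fiber $\cX_t$. Taking $\ddc$ of the logarithm of \eqref{eq23}, the density contributes the term $\sum e_i\,\ddc\log(\ep^2+|\sigma_i|^2)$, whose dominant part is \emph{positive}, an approximation of the current of integration along $E$: this sign is actually favourable for the Aubin--Yau inequality. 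The only genuine positive contribution to be absorbed is the bisectional curvature of $\om_P^\ep$, which is uniformly bounded; choosing $A$ large enough (in terms of $\lambda$ and this curvature bound), maximum-principle computations at a maximum point of $H$ yield
\begin{equation*}
\omvpe\,\leq\, C\,\om_P^\ep \quad\text{uniformly in } \ep,\,t.
\end{equation*}

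\textbf{Step 3: gradient estimate.} Having the $L^\infty$ bound from Step 1 and the upper bound $\omvpe\le C\,\om_P^\ep$ from Step 2, one applies B\l ocki's gradient estimate with reference metric $\om_P^\ep$: the estimate requires only an upper bound of $\omvpe$ against the reference, a uniform lower bound on the bisectional curvature of the reference (available for $\om_P^\ep$ by \cite{GP}), and a $\cC^0$ bound on $\vpe$. This produces $|\nabla_{\om_P^\ep}\vpe|_{\om_P^\ep}\leq C$, which is precisely the announced $\cC^1$ bound.

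\textbf{Main obstacle.} The delicate step is Step 2. The zeros along $E$ force the Ricci curvature of $\omvpe$ to blow down to $-\infty$ near $E$, so the familiar form of the Aubin--Yau estimate, which relies on Ricci bounds for the \emph{solution}, must be replaced by one relying only on bisectional curvature bounds for the \emph{reference} $\om_P^\ep$. The key observation that makes this work is the correct sign of $\ddc\log(\ep^2+|\sigma_i|^2)$: the zero divisor $E$ actually \emph{helps} the upper Laplacian bound rather than obstructing it, because it forces $\omvpe$ to be small near $E$. Once this is understood the whole argument becomes a fibrewise application of the conic Aubin--Yau--B\l ocki scheme, with all constants depending only on the ambient geometry and not on $\ep$ or $t$.
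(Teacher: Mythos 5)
Your three-step architecture (uniform $L^\infty$ bound, Laplacian bound, gradient bound) matches the paper's organization, and Steps 1 and 2 are essentially correct — in particular your observation that the one-sided Laplacian bound $\omvpe\le C\,\om_P^\ep$ needs only an \emph{upper} bound on the log-density is exactly the point the paper makes when invoking \cite[Prop.~1]{GP} for condition $(iv)$ of Theorem~\ref{thm:conic_grad}. But there is a genuine gap in Step 3, and your diagnosis of where the difficulty lies is off.

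In Step 3 you write that B\l ocki's gradient estimate applies because $\om_P^\ep$ has ``a uniform lower bound on the bisectional curvature... (available for $\om_P^\ep$ by \cite{GP})''. This is not what \cite{GP} shows. The holomorphic bisectional curvature of the regularized conic metric is \emph{not} bounded below uniformly in $\ep$ near $\Supp(B)$: the bound one gets there is of the form $i\Theta(T_X)\geq-(C\om+\ddc\Psi)\otimes\mathrm{Id}$ where $\ddc\Psi$ is an unbounded correction. Moreover, B\l ocki's estimate also requires control on the gradient of the log-density $F$, and here $|\nabla F|_{\om_P^\ep}$ blows up (roughly like $\ep^{-1}$ near $E$, coming from $\sum_i e_i\log(\ep^2+|\sigma_i|^2)$) — the fact that $\ddc\log(\ep^2+|\sigma_i|^2)$ has a favourable sign does not help the \emph{gradient} term in B\l ocki's argument. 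Both failures are precisely why the paper cannot cite B\l ocki off the shelf but instead proves Theorem~\ref{thm:conic_grad}: a modified gradient estimate in which a barrier function $\Psi$ (taken to be $C\sum_i(|\sigma_i|^2+\ep^2)^\rho+C\sum_j(|s_j|^2+\ep^2)^\rho$) is added to the test function $\alpha=\log|\nabla\vp|^2-\gamma\circ\vp$. The hypotheses $(ii).a$, $(ii).b$, $(iii)$ on $\Psi$ are exactly what lets one absorb both the unbounded lower-curvature term and the unbounded $|\nabla F|$ term in the maximum-principle computation. You also identify the ``main obstacle'' as Step 2, but Step 2 is the benign one; the genuinely delicate step is the gradient bound, and your proposal skips over it.
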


\noindent If $b_j=0$, this is a consequence of \cite{Yau78}, cf. also the version established in \cite{Paun}, stating that
\begin{equation}\label{eq22}
\omega+ dd^c\varphi_\ep\leq C\omega|_{\cX_t}.
\end{equation}
The conic case is much more involved and we refer to Theorem~\ref{thm:conic_grad} and the few lines following that statement, on page~\pageref{pageyau}. Note that inequality \eqref{eq22} is still true provided that we replace
the RHS with $\displaystyle C\omega_{B, \ep}|_{\cX_t}$, where $\omega_{B, \ep}$ is the regularization of a
conic metric corresponding to $(X, B)$ which is quasi-isometric with \eqref{wsp4}.
\smallskip

\noindent During the rest of the current subsection we assume that $\lambda= 0$, which is anyway what we need for the proof of Theorem \ref{thmc}. We will explain along the way how
to adapt our method to the case $\lambda> 0$.

\subsubsection{Mean value of the $t$-derivative} Let $v$ be a smooth vector field on $\cX$ of (1,0)-type,
which has the following properties.
\begin{enumerate}

\item[$(i)$] It is a lifting of $\displaystyle \frac{\partial}{\partial t}$, i.e. we have
\begin{equation}
dp(v)= \frac{\partial}{\partial t}
\end{equation}
(with the usual abuse of notation...).

\item[$(ii)$] We write $v$ locally as in \eqref{eq1}; then on $\Omega_j$ we have
\begin{equation}
|v^\alpha(z_j)|\leq C|z_j^\alpha|
\end{equation}
(we use the notations/conventions as in \eqref{E1}) for all $\alpha= 1,\dots , d$. This means
that $v$ is a smooth section of the logarithmic tangent space of $(X, E_{\rm red}+ B_{\rm red})$.
\end{enumerate}

\noindent Such a vector field $v$ is easy to construct, by a partition of unit of local lifts of
$\displaystyle \frac{\partial}{\partial t}$. We consider the coordinate sets
$\Omega_j$ and the $z_j$ adapted to the pair $(\cX, B+ E)$. Then the particular form of the transition  implies (ii).
\medskip

\noindent In this context we have the following statement.

\begin{lemm}\label{MV}
 There exists a constant $C> 0$ independent of $\ep$ such that we have
\begin{equation}\label{wsp12}
\left|\int_{\cX_t}v(\varphi_\ep)\omega_{\varphi_{\ep}}^n\right|\leq C
\end{equation}
for any $t\in \DD^\prime$.
\end{lemm}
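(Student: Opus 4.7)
The plan is to differentiate the normalization $G(t):=\int_{\cX_t}\varphi_\ep\,\omega_{\varphi_\ep}^n\equiv 0$ along real lifts of the coordinate directions on $\DD$, and to close the resulting identity by combining Cauchy--Schwarz with the Laplace-type PDE satisfied by $v(\varphi_\ep)$. Introducing the real lifts $V:=v+\bar v$ and $W:=-i(v-\bar v)$ of $\partial/\partial x,\partial/\partial y$ (where $t=x+iy$), one has $V(\varphi_\ep)=2\mathrm{Re}\,v(\varphi_\ep)$ and $W(\varphi_\ep)=2\mathrm{Im}\,v(\varphi_\ep)$, so it suffices to bound $\int_{\cX_t}V(\varphi_\ep)\omega_{\varphi_\ep}^n$ uniformly and argue symmetrically for $W$. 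The real flow of $V$ identifies fibers, and differentiating $G\equiv 0$ via Cartan's formula $\cL_V\omega_{\varphi_\ep}=d(i_V\omega_{\varphi_\ep})$ (valid because $\omega_{\varphi_\ep}$ is closed), followed by one integration by parts on the compact fiber, yields
\[
\int_{\cX_t}V(\varphi_\ep)\omega_{\varphi_\ep}^n = n\int_{\cX_t}d_f\varphi_\ep\wedge (i_V\omega_{\varphi_\ep})|_{\cX_t}\wedge \omega_{\varphi_\ep}^{n-1}|_{\cX_t}.
\]

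To estimate the RHS I will split $i_V\omega_{\varphi_\ep}=i_V\omega+i_V(dd^c\varphi_\ep)$. The smooth piece $i_V\omega$ is pointwise bounded in the regularized conic metric $\omega_{B,\ep}$, so Cauchy--Schwarz against the fiber Dirichlet energy handles it: a standard integration by parts combined with $dd^c\varphi_\ep=\omega_{\varphi_\ep}-\omega$ and the normalization gives $\int_{\cX_t}|d_f\varphi_\ep|^2_{\omega_{\varphi_\ep}}\omega_{\varphi_\ep}^n=n\int\varphi_\ep\,\omega\wedge\omega_{\varphi_\ep}^{n-1}$, which is uniformly bounded by $\|\varphi_\ep\|_\infty\cdot\{\omega\}^n|_{\cX_t}$ (and $\|\varphi_\ep\|_\infty$ is under control by the family version of Ko\l odziej's estimate, since $b_j<1$ makes $\mu_\ep$ uniformly $L^p$ for some $p>1$). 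The $\varphi_\ep$-dependent piece unfolds into $i\bar\partial v(\varphi_\ep)-i\partial\bar v(\varphi_\ep)$ plus a smooth correction of the form $(\bar\partial v^\alpha)\varphi_{\ep,\alpha}$, which is pointwise bounded because condition (ii) forces $v^\alpha=z^\alpha w^\alpha$ along $\Supp(B+E)$ and the product $|z^\alpha|\cdot|\varphi_{\ep,\alpha}|\leq C|z^\alpha|^{1-b_\alpha}$ by the conic $\cC^1$ bound of Theorem~\ref{Yau}. A final fiber integration by parts converts the $\bar\partial v(\varphi_\ep)$ contribution into $-\int_{\cX_t}\varphi_\ep\Delta_{\omega_{\varphi_\ep}}v(\varphi_\ep)\,\omega_{\varphi_\ep}^n$, and substituting the PDE \eqref{eq18} for $v(\varphi_\ep)$ with $\lambda=0$ replaces $\Delta v(\varphi_\ep)$ by $\mathrm{tr}_{\omega_{\varphi_\ep}}\partial(\bar\partial v\cdot\varphi_\ep)+v(f)+\Psi_v-\Psi_{\varphi_\ep,v}$; the lower-order terms are bounded via $\|\varphi_\ep\|_\infty$ and cohomological identities, while the trace term is handled by yet another integration by parts and Cauchy--Schwarz, using that $\bar\partial v\cdot\varphi_\ep$ is pointwise bounded by the same logarithmic tangency argument.

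Combining the estimates gives $\big|\int_{\cX_t}V(\varphi_\ep)\omega_{\varphi_\ep}^n\big|\leq C$ uniformly in $\ep$ and $t\in\DD'$; the symmetric argument with $W$ controls the imaginary part and establishes the lemma. The main technical obstacle is the transverse mixed second derivative $\varphi_{\ep,t\bar\beta}$ hidden in $i_V(dd^c\varphi_\ep)$, for which no pointwise estimate follows from Theorem~\ref{Yau}. The saving feature is that these derivatives enter only inside the fiber integral $\int d_f\varphi_\ep\wedge\bar\partial v(\varphi_\ep)\wedge\omega_{\varphi_\ep}^{n-1}$; one integration by parts converts them into $\Delta_{\omega_{\varphi_\ep}}v(\varphi_\ep)$, at which point the PDE \eqref{eq18} reduces everything to fiber quantities already controlled by the conic $\cC^1$ estimate and the logarithmic tangency of $v$.
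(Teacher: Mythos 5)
Your proof takes a genuinely different and substantially heavier route than the paper's. The paper's proof exploits a simple but decisive feature of the case $\lambda=0$: the Monge--Amp\`ere measure $\omega_{\varphi_\ep}^n$ on each fiber is, by \eqref{eq23}, an \emph{explicit} measure, $e^{f}\prod_i(\ep^2+|\sigma_i|^2)^{e_i}\prod_j(\ep^2+|s_j|^2)^{-b_j}\omega^n$, depending on $t$ only through known smooth data. Writing the normalization $\int_{\cX_t}\varphi_\ep\,\omega_{\varphi_\ep}^n=0$ in local charts as in \eqref{wsp10} and differentiating in $t$, the terms where $\partial_t$ hits the explicit weight are $O(\Vert\varphi_\ep\Vert_{L^\infty})$, while the term carrying $\partial_t\varphi_\ep$ is exactly $\int_{\cX_t}v(\varphi_\ep)\omega_{\varphi_\ep}^n$ up to a correction of the form $\sum_i\int\theta_i v^\alpha\varphi_{\ep,\alpha}\,d\mu_\ep$; one fiberwise integration by parts converts this into $-\sum_i\int\varphi_\ep\,\partial_\alpha(\theta_iv^\alpha\,d\mu_\ep)$, and the logarithmic tangency $|v^\alpha|\lesssim|z^\alpha|$ guarantees $|\partial_\alpha(v^\alpha\,d\mu_\ep)|\lesssim d\mu_\ep$, so this too is $O(\Vert\varphi_\ep\Vert_{L^\infty})$. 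The entire proof thus needs only the $\mathcal C^0$ bound and property $(ii)$ of $v$. Your argument, by contrast, treats $\omega_{\varphi_\ep}^n$ abstractly and routes the estimate through Cartan's formula, the decomposition $i_V\omega_{\varphi_\ep}=i_V\omega+i_V dd^c\varphi_\ep$, the fiber Dirichlet energy identity, the conic $\mathcal C^1$ estimate of Theorem~\ref{Yau}, and the Lie-derivative PDE \eqref{eq18}. The skeleton is sound and I see no gap, but you are paying for Theorem~\ref{Yau} and for rederiving (in a special case) the integration-by-parts bookkeeping already done in Proposition~\ref{L2L1}, precisely the heavy machinery that Lemma~\ref{MV} is meant to precede. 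The one thing your route buys is robustness: it does not rely on the Monge--Amp\`ere measure having an explicit closed form, so it would adapt if the normalization were taken against a different reference volume. But for the lemma as stated, the paper's direct differentiation of the explicit normalization is the shorter and logically lighter path.
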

\begin{proof}
  We consider a covering of $\cX$ by coordinate sets
  $\displaystyle \left(U_i, (z_i, t)\right)_{i}$ where the last coordinate $t$ is given by the map $p$. The normalization condition $\eqref{eq20}$ can be written as
\begin{equation}\label{wsp10}
\sum_i\int_{\Vert z_i\Vert< 1}\theta_i(z_i, t) \varphi_\ep(z_i, t)\frac{\prod_{\alpha \in I}\left(\ep^2+ |z_i^\alpha|^2e^{\phi_\alpha(z_i, t)}\right)^{e_\alpha}}{\prod_{\beta\in J}\left(\ep^2+ |z_i^\beta|^2e^{\psi_\beta(z_i, t)}\right)^{b_\beta}}e^{F_i(z_i, t)}d\lambda(z_i)
\end{equation}
where $\theta_i$ is a partition of unit, $I\cap J= \emptyset$ and $e^{F_i(z_i, t)}d\lambda(z_i)$ is the volume element $\omega^n$ restricted to $\cX_t$.
We take the $t$-derivative of \eqref{wsp10} and we have
\begin{equation}\label{wsp11}
\sum_i\int_{\Vert z_i\Vert< 1}\theta_i(z_i, t) \frac{\partial \varphi_\ep(z_i, t)}{\partial t}\frac{\prod_{\alpha \in I}\left(\ep^2+ |z_i^\alpha|^2e^{\phi_\alpha(z_i, t)}\right)^{e_\alpha}}{\prod_{\beta\in J}\left(\ep^2+ |z_i^\beta|^2e^{\psi_\beta(z_i, t)}\right)^{b_\beta}}e^{F_i(z_i, t)}d\lambda(z_i)= O(1)
\end{equation}
where $\O(1)$ above is uniform with respect to $t, \ep$ by the
$\cC^0$ estimates for $\varphi_\ep$.
Now by the construction of the vector $v$ above
the LHS of \eqref{wsp11} is precisely \eqref{wsp12}, so the lemma follows.
\end{proof}

\subsubsection{$L^2$-bound of the $t$-derivative}

We rewrite the relation corresponding to \eqref{eq18} in our setting; during the
next computations, we denote by
\begin{equation}
\tau:= v(\varphi_\ep)
\end{equation}
and then we have
{\small
\begin{equation}\label{eq24}
\Delta_{\varphi_\ep} \tau - \Tr_{\varphi_\ep} \partial \big(\dbar v\cdot \varphi_\ep\big)+
\Psi_{\varphi_\ep, v}= \lambda \tau+ v(f)+
\sum_je_jv\big(\log(\ep^2+ |\sigma_j|^2)\big)- \sum_ib_i v\big(\log(\ep^2+ |s_i|^2)\big)+ \Psi_v.
\end{equation}}
\smallskip

\noindent The equality \eqref{eq24} will be used in order to establish the following
statement.
\begin{prop} \label{L2L1}
There exists a constant $C> 0$ such that we have
\begin{equation}\label{eq25}
\int_{\cX_t}|\nabla_\ep \tau|_\ep^2\,\omega_{\varphi_\ep}^n\leq
C\left(1+ \int_{\cX_t}|\tau|\,\omega_{\varphi_\ep}^n\right)
\end{equation}
for any $\ep> 0$. The operator $\nabla_\ep$ is the gradient corresponding to the
metric $\displaystyle \omega_{\varphi_\ep}$.
\end{prop}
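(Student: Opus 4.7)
\emph{Proof sketch of Proposition \ref{L2L1}.}

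The plan is to run the standard \emph{testing by $\tau$} argument against the linearised equation \eqref{eq24}. Concretely, multiply both sides of \eqref{eq24} by $\tau$ and integrate over $\cX_t$ with respect to the measure $\omega_{\varphi_\ep}^n$. On the left, integration by parts turns the Laplace term into $-\int_{\cX_t}|\nabla_\ep\tau|_\ep^2\,\omega_{\varphi_\ep}^n$, so the proof reduces to bounding
\[
\int_{\cX_t}\tau\cdot(\text{RHS of \eqref{eq24}})\,\omega_{\varphi_\ep}^n
\]
by $C\bigl(1+\int_{\cX_t}|\tau|\,\omega_{\varphi_\ep}^n\bigr)+\delta\int_{\cX_t}|\nabla_\ep\tau|_\ep^2\,\omega_{\varphi_\ep}^n$, with $\delta>0$ small enough to be absorbed on the left. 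Since $\lambda=0$ in the case of interest, the term $\lambda\tau$ is harmless (and for $\lambda>0$ it yields a good sign after testing).

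The genuinely second-order term, $\Tr_{\varphi_\ep}\partial(\dbar v\cdot\varphi_\ep)$, is treated by a further integration by parts: using $(\Tr_{\varphi_\ep}\alpha)\,\omega_{\varphi_\ep}^n=n\alpha\wedge\omega_{\varphi_\ep}^{n-1}$ and $d\omega_{\varphi_\ep}=0$, one gets
\[
\int_{\cX_t}\tau\,\Tr_{\varphi_\ep}\partial(\dbar v\cdot\varphi_\ep)\,\omega_{\varphi_\ep}^n
=-n\int_{\cX_t}\partial\tau\wedge(\dbar v\cdot\varphi_\ep)\wedge\omega_{\varphi_\ep}^{n-1},
\]
and Cauchy--Schwarz gives the estimate $\delta\int|\nabla_\ep\tau|_\ep^2\omega_{\varphi_\ep}^n+C_\delta\int|\dbar v\cdot\varphi_\ep|_\ep^2\,\omega_{\varphi_\ep}^n$. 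The second integral is controlled because $v$ is a smooth (1,0)-vector field and $|\partial\varphi_\ep|_\ep$ is uniformly bounded by the conic gradient estimate, Theorem~\ref{Yau}. The term involving $\Psi_{\varphi_\ep,v}$ is treated in the same way: because $\omega$ is Kähler, $\cL_v\omega=d(i_v\omega)$, and testing it against $\tau$ and $\omega_{\varphi_\ep}^{n-1}$ and integrating by parts again produces a term linear in $\partial\tau$ paired with the smooth 1-form $i_v\omega$, absorbable by Cauchy--Schwarz. The term $\Psi_v$ requires no work, as $\Psi_v\,\omega^n=\cL_v(\omega)\wedge\omega^{n-1}$ is smooth.

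The remaining terms on the right of \eqref{eq24}, namely $v(f)$ and the logarithmic singular contributions $v(\log(\ep^2+|\sigma_j|^2))$, $v(\log(\ep^2+|s_i|^2))$, are \emph{pointwise uniformly bounded} in $\ep$, thanks to the key property $(ii)$ imposed on $v$: in the local coordinates of \eqref{E1} one has $v^\alpha=O(z^\alpha)$, hence $v(|\sigma_j|^2)=O(|\sigma_j|^2)$ and
\[
\bigl|v(\log(\ep^2+|\sigma_j|^2))\bigr|
=\frac{|v(|\sigma_j|^2)|}{\ep^2+|\sigma_j|^2}
\le C\,\frac{|\sigma_j|^2}{\ep^2+|\sigma_j|^2}+C'
\le C'',
\]
uniformly in $\ep$, and analogously for $|s_i|^2$. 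Testing these bounded quantities against $\tau$ yields contributions of size $C\int|\tau|\,\omega_{\varphi_\ep}^n$.

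Putting everything together, after absorbing the small $\delta\int|\nabla_\ep\tau|_\ep^2$ terms on the left we obtain precisely \eqref{eq25}. The main obstacle in this argument is making sure that no quantity blows up in $\ep$ near $\Supp(E+B)$: all such potential blow-ups are cured by the tangency of $v$ to the reduced divisor together with the conic $\mathcal C^1$ estimate of Theorem~\ref{Yau}. The argument is insensitive to the sign of $\lambda$, which is why the proof also works for $\lambda>0$ (where the $\lambda\tau^2$ contribution only improves things).
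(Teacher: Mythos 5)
Your proposal is correct and follows essentially the same route as the paper's own proof: test \eqref{eq24} against (the conjugate of) $\tau$, integrate against $\omega_{\varphi_\ep}^n$, treat the two Laplacian/trace terms by integration by parts plus Cauchy--Schwarz, and observe that the remaining source terms are uniformly bounded in $\ep$ thanks to property $(ii)$ of $v$. The only cosmetic differences are that you absorb the $L^2$-gradient via a Young/$\delta$-inequality where the paper instead leaves a square-root term and concludes algebraically (equivalent), and that you spell out the pointwise bound on $v(\log(\ep^2+|\sigma_j|^2))$, which the paper states without computation.
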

\begin{proof}
In order to establish \eqref{eq25} we multiply the relation \eqref{eq24} with $\ol \tau$ and then we
integrate the result on $\cX_t$
against the measure $\omega_{\varphi_\ep}^n$. A few observations are in order.
\begin{enumerate}

\item[$\bullet$] We have
\begin{equation}
\sup_{\cX_t}\left(\left|v(f)\right|+ \left|\sum_{j}e_jv\big(\log(\ep^2+ |\sigma_j|^2)\big)\right|+ \left|\sum_{i}b_iv\big(\log(\ep^2+ |s_i|^2)\big)\right|+
\left|\Psi_v\right|\right)\leq C
\end{equation}
uniformly with respect to $\varepsilon$, by the property (ii) of the vector field $v$ and the definition
\eqref{eq17} of the function $\Psi_v$.

\item[$\bullet$] Since the constant $\lambda$ is positive, the $L^2$ norm of $\sqrt{\lambda}\tau$
will be on the left-hand side part of \eqref{eq25}, hence the presence of a strictly positive
$\lambda$ would
reinforce the inequality we want to obtain.
\end{enumerate}
\medskip

\noindent The terms
\begin{equation}\label{eq26}
\Tr_{\varphi_\ep} \partial \big(\dbar v\cdot \varphi_\ep\big), \quad \Psi_{\varphi_\ep, v}
\end{equation}
are kind of troublesome, because we do not have a $L^\infty$ bound for them. Nevertheless,
we recall that we only intend to establish an inequality between $L^p$ norms, and we will use
integration by parts to deal with \eqref{eq26}.

For the first term in \eqref{eq26} we argue as follows: integration by parts gives
\begin{equation}\label{eq5}
\int_{\cX_t}\ol \tau\partial \big(\dbar v\cdot \varphi_\ep\big)\wedge \omega_{\varphi_\ep}^{n-1}=
- \int_{\cX_t}\partial \ol \tau\wedge \dbar v\cdot \varphi_\ep\wedge \omega_{\varphi_\ep}^{n-1}
\end{equation}
and then we use Cauchy-Schwarz: the $L^2$ norm of $\dbar\tau$ is what we are after, but on the right hand side term we have it squared. The $L^2$ norm of $\dbar v\cdot \varphi_\ep$
is completely under control, because it only involves the fiber-direction derivatives of $\varphi_\ep$.

The second term is tamed in a similar manner. By definition of $\Psi_{\varphi_\ep, v}$ we have
\begin{equation}\label{eq27}
\int_{\cX_t}\ol\tau \Psi_{\varphi_\ep, v} \,  \omega_{\varphi_\ep}^{n}=
\int_{\cX_t}\ol\tau \cL_v(\omega)\wedge \omega_{\varphi_\ep}^{n-1}
\end{equation}
and by Cartan formula this is equal to
\begin{equation}\label{eq28}
\int_{\cX_t}\ol\tau d(i_v\cdot \omega)\wedge \omega_{\varphi_\ep}^{n-1}= \int_{\cX_t}\ol\tau
\partial(i_v\cdot \omega)\wedge \omega_{\varphi_\ep}^{n-1}.
\end{equation}
By Stokes formula the term \eqref{eq28} is equal to
\begin{equation}\label{eq29}
\int_{\cX_t}\partial\ol\tau \wedge
(i_v\cdot \omega)\wedge \omega_{\varphi_\ep}^{n-1}
\end{equation}
and now things are getting much better, in the sense that the $(0,1)$--form $i_v\cdot \omega$
is clearly smooth, so its $L^2$ norm with respect to $\omvpe$ is dominated by $C\int_{\cX_t} \om\wedge \omvpe^{n-1}\le C'$ and we use the Cauchy-Schwarz inequality.\\

\noindent All in all, we infer the existence of two constants $C_1$ and $C_2$ such that we have
\begin{equation}\label{eq30}
\int_{\cX_t}|\nabla_\ep \tau|^2\omega_{\varphi_\ep}^n\leq
C_1\int_{\cX_t}|\tau|\omega_{\varphi_\ep}^n+
C_2\left(\int_{\cX_t}|\nabla_\ep \tau|^2\omega_{\varphi_\ep}^n\right)^{1/2}
\end{equation}
for any positive $\ep> 0$.
The inequality \eqref{eq25} follows.
\end{proof}
\medskip

\noindent We infer the following statement.

\begin{theo}\label{L2N} There exists a positive integer $N\in \Z_+$ and a positive constant $C$
such that we have
\begin{equation}\label{eq31}
\int_{\cX_t}|\tau|^2d\mu^{(\ep)}_{Ne}\leq C
\end{equation}
for every positive $\ep$.
\end{theo}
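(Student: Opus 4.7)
Plan. The proof combines Proposition~\ref{L2L1} (controlling $\nabla\tau$ in $L^2$ against the $L^1$-norm of $\tau$) with the weighted Poincaré and Sobolev inequalities (Propositions~\ref{WP}, \ref{WS}). Set
\[
A_\ep := \int_{\cX_t}|\tau|\, d\mu_e^{(\ep)}, \qquad B_\ep := \int_{\cX_t}|\nabla\tau|^2_{\omega_{B,\ep}}\, d\mu_e^{(\ep)}.
\]
Theorem~\ref{Yau} (giving $\omvpe\le C\,\omega_{B,\ep}$) together with the $L^\infty$-bound on $\vp_\ep$ (so that $\omvpe^n$ and $d\mu_e^{(\ep)}$ are comparable up to the bounded factor $w_\ep:=e^{\lambda\vp_\ep+f}$) translates Proposition~\ref{L2L1} into $B_\ep \le C(1+A_\ep)$.

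Step 1 ($L^1$-bound on $\tau$). I apply Proposition~\ref{WP} with $p=1$, $q=e$ to obtain $\int|\tau-\bar\tau_e|\,d\mu_e^{(\ep)} \le C\int|\nabla\tau|\,d\mu_{e/2}^{(\ep)}$. Cauchy–Schwarz gives $\int|\nabla\tau|\,d\mu_{e/2}^{(\ep)} \le C\,B_\ep^{1/2}$, where the key input is the uniform estimate $\int d\mu_0^{(\ep)}\le C$, which holds precisely because every $b_\beta<1$. Lemma~\ref{MV} bounds the $\omvpe^n$-mean $a$ of $\tau$; a short centering argument (writing $\int(\tau-a)\,d\mu_e^{(\ep)} = \int(\tau-\bar\tau_e)(1-w_\ep/\bar w_\ep)\,d\mu_e^{(\ep)}$, the weight having zero $d\mu_e^{(\ep)}$-mass) transfers this to $|\bar\tau_e| \le C(1+B_\ep^{1/2})$. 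Combined with the Poincaré estimate and $B_\ep\le C(1+A_\ep)$, this closes into $A_\ep\le C$ and $B_\ep\le C$.

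Step 2 ($L^{p_1}$-bound with $p_1$ close to $2$). Fix an integer $M\ge n+2$ and set $p_1:=2(M-1)/M<2$, so that $q_{p_1}=e$ when $q=Me$ in Proposition~\ref{WP}. The weighted Poincaré inequality together with Hölder gives
\[
\int|\tau-\bar\tau_{Me}|^{p_1}\,d\mu_{Me}^{(\ep)} \le C\int|\nabla\tau|^{p_1}\,d\mu_e^{(\ep)} \le C\,B_\ep^{p_1/2}\,V_e^{1-p_1/2} \le C.
\]
Because $d\mu_{Me}^{(\ep)}\le C\,d\mu_e^{(\ep)}$ and $A_\ep\le C$, the mean $\bar\tau_{Me}$ is uniformly bounded, hence $\int|\tau|^{p_1}\,d\mu_{Me}^{(\ep)} \le C$.

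Step 3 ($L^2$-boost via Sobolev). Apply Proposition~\ref{WS} with $p_2:=2n/(n+1)$ (making the LHS exponent equal to $2$) and $q=(n+1)Me$, so $q_{p_2}=Me$. Setting $g:=\tau-\bar\tau_{(n+1)Me}$ this reads
\[
\Big(\int|g|^2\,d\mu_{(n+1)Me}^{(\ep)}\Big)^{n/(n+1)} \le C\Big(\int|\nabla\tau|^{p_2}\,d\mu_{Me}^{(\ep)} + \int|g|^{p_2}\,d\mu_{Me}^{(\ep)}\Big).
\]
The gradient term is bounded by Hölder from $\int|\nabla\tau|^2\,d\mu_{Me}^{(\ep)} \le C B_\ep$; the function term is bounded by Jensen applied to $\int|g|^{p_1}\,d\mu_{Me}^{(\ep)}$ from Step~2, which is legitimate since $M\ge n+2$ ensures $p_1>p_2$. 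Adding back the uniformly bounded mean yields $\int|\tau|^2\,d\mu_{Ne}^{(\ep)} \le C$ with $N:=(n+1)M$.

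Main obstacle. The real difficulty is juggling the $d\mu_{qe}^{(\ep)}$ weights across the three inequalities: Proposition~\ref{L2L1} lives on $d\mu_e^{(\ep)}$, while Propositions~\ref{WP}–\ref{WS} connect a LHS weight $d\mu_q^{(\ep)}$ with a lighter RHS weight $d\mu_{q_p}^{(\ep)}$ ($q_p=(1-p/2)q<q$). The two essential uniform integrabilities that make everything work are (i) $\int d\mu_0^{(\ep)}\le C$, a consequence of $b_\beta<1$, used to close the loop $A_\ep\leftrightarrow B_\ep$ in Step~1, and (ii) $\int\prod(\ep^2+|\sigma_\alpha|^2)^{(N-n)e_\alpha}/\prod(\ep^2+|s_\beta|^2)^{b_\beta}\,dV_\omega\le C$, valid once $N\ge n$, used at every Hölder step to absorb $\sigma$-weight mismatches.
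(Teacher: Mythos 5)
Your proof is correct, and it takes a genuinely different route from the paper after the common Step~1. Both proofs begin identically: apply Proposition~\ref{WP} with $p=1$, combine with Lemma~\ref{MV} and Proposition~\ref{L2L1}, close the loop $A_\ep\leftrightarrow B_\ep$, and conclude $\int_{\cX_t}|\tau|\,d\mu^{(\ep)}_e\le C$ and $\int_{\cX_t}|\nabla\tau|^2\,d\mu^{(\ep)}_e\le C$. (Your ``centering argument'' spelling out how the $\omvpe^n$-mean from Lemma~\ref{MV} transfers to the $d\mu^{(\ep)}_e$-mean is a welcome expansion of the paper's terse ``combined with Lemma~\ref{MV}''.) After that, the paper runs a Moser-type iteration: it applies the weighted Sobolev inequality $n$ times with the exponent sequence $p_k=\tfrac{2n}{2n-k+1}$ ($p_1=1\to p_{n+1}=2$) and the weight sequence $q_{k+1}=\tfrac{2}{2-p_k}q_k$, arriving at $N=\binom{2n}{n}$. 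You instead observe that, once the $L^2$ gradient bound $B_\ep\le C$ is in hand, a single application of the weighted Poincar\'e inequality with $p_1$ arbitrarily close to $2$ already delivers an $L^{p_1}$ bound on $\tau$ against a heavier weight $d\mu^{(\ep)}_{Me}$; one Sobolev application with $p_2=\tfrac{2n}{n+1}$ then boosts this to $L^2$ against $d\mu^{(\ep)}_{(n+1)Me}$. The exponent bookkeeping checks out: $q_{p_1}=e$ for $q=Me$, $q_{p_2}=Me$ for $q=(n+1)Me$, $p_1>p_2$ precisely when $M\ge n+2$, and the gradient term on the Sobolev RHS is absorbed by H\"older from $B_\ep$. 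What this buys you is a proof with two applications of the key inequalities instead of $n+1$, and a value of $N$, namely $(n+1)(n+2)$, which is polynomial in $n$ whereas the paper's $\binom{2n}{n}$ grows like $4^n/\sqrt{n}$; what it costs is nothing, since Proposition~\ref{WP} is stated (and its change-of-variables proof is valid) for any $1\le p\le 2$, so the ``big jump'' to $p_1$ close to $2$ is already available. One small caveat worth stating explicitly: subtracting $\bar\tau_{(n+1)Me}$ in Step~3 is unnecessary (and introduces a slight mismatch with the mean $\bar\tau_{Me}$ controlled in Step~2); Proposition~\ref{WS} has no mean-subtraction, so one should simply take $g=\tau$ there, which is what you in effect use anyway.
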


\begin{proof} The arguments which will follow are absolutely standard, by combining the
Sobolev and Poincar\'e inequalities with \eqref{eq25}. Prior to this, we recall that we have
\begin{equation}\label{eq32}
\omega_{\ep}\leq C\omega_{B, \ep}
\end{equation}
on each $\cX_t$ for some constant $C$ which is uniform with respect to $\ep$
and with respect to $t\in \DD^\prime$. On the RHS of \eqref{eq32} we have
$\omega_{B, \ep}$ which stands for any metric quasi-isometric with
\eqref{wsp4}. In particular, for any function $f$ we have
\begin{equation}\label{eq33}
|\nabla f|\leq C|\nabla_\ep f|_\ep
\end{equation}
where the symbols $|\cdot |, \nabla$ and $ |\cdot |_\ep, \nabla_\ep$
correspond to the metric $\omega_{B, \ep}$ and $\omega_\ep$ respectively.

Now, Poincar\'e inequality \ref{WP} applied for $\alpha= 1$ combined with Lemma \ref{MV}
gives
\begin{equation}\label{eq34}
\int_{\cX_t}|\tau|d\mu^{(\ep)}_{e}\leq C\left(1+ \int_{\cX_t}|\nabla \tau|d\mu^{(\ep)}_{e/2}\right).
\end{equation}

On the other hand we have
\begin{eqnarray*}
\int_{\cX_t}|\nabla \tau|d\mu^{(\ep)}_{e/2} & \leq & C\int_{\cX_t}|\nabla_\ep \tau|_\ep d\mu^{(\ep)}_{e/2}
\\
& \le & C\left(\int_{\cX_t}|\nabla_\ep \tau|_\ep^2 d\mu^{(\ep)}_{e}\right)^{1/2}
\\
& \le & C+ C\left(\int_{\cX_t}|\tau|d\mu^{(\ep)}_{e}\right)^{1/2}
\end{eqnarray*}
where we have used Proposition \ref{L2L1} for the last inequality. When combined with \eqref{eq34},
this implies
\begin{equation}\label{eq35}
\int_{\cX_t}|\tau|d\mu^{(\ep)}_{e}\leq C
\end{equation}
for any $\ep> 0$.
\smallskip

\noindent We define next the sequence of rational numbers
\begin{equation}\label{eq36}
p_1=1, \quad p_{k+1}:= \frac{2np_k}{2n- p_k}
\end{equation}
as well as the sequence
\begin{equation}\label{eq37}
q_1= e, \quad q_{k+1}:= \frac{2}{2- p_k}q_k.
\end{equation}
One can actually find a closed formula for $p_k=\frac{2n}{2n-k+1}$ holding for $1\le k \le 2n$. It also follows that $p_k<2$ as long as $1 \le k \le n$ which is thus the range of integers for which $q_{k+1}$ is defined; one can also check the formula $q_{k+1}=\frac{(2n)!(n-k)!}{n!(2n-k)!}\cdotp q$. In particular $q_{n+1}=\frac{(2n)!}{n!^2} \cdotp q$. This is the factor $N$ in the statement of the proposition.

We observe that for $k=1,\dots n$ the components of $q_k$ are positive rational numbers, greater than
the respective components of $q$.

\noindent The Sobolev inequality \ref{WS} gives
\begin{equation}\label{eq39}
\left(\int_{\cX_t}|\tau|^{p_{k+1}}d\mu^{(\ep)}_{q_{k+1}}\right)^{\frac{1}{p_{k+1}}}\leq
C\left(\int_X|\nabla_\ep \tau|_\ep^{p_k}d\mu^{(\ep)}_{q_k} +  \int_{\cX_t}|\tau|^{p_k}d\mu^{(\ep)}_{q_k} \right)^{\frac{1}{p_k}}
\end{equation}

\noindent We iterate \eqref{eq29} for $k=1,\dots n$ and the Proposition \ref{L2N} is proved by observing that
the following holds.

\noindent $\bullet$ We have
$\displaystyle \int_{\cX_t}|\nabla_\ep \tau|_\ep^2\omega_{\varphi_\ep}^n\leq C$, by Proposition \ref{L2L1}, combined with \eqref{eq35} and the fact that the quotient of the two measures
\begin{equation}
\omega_{\varphi_\ep}^n, d\mu^{(\ep)}_e
\end{equation}
is uniformly bounded both sides.
\smallskip

\noindent $\bullet$ For each $k= 1,\dots n$ we have
\begin{equation}
\int_{\cX_t}|\nabla_\ep \tau|_\ep^{p_k}d\mu^{(\ep)}_{q_k}\leq C
\left(\int_{\cX_t}|\nabla_\ep \tau|_\ep^{2}d\mu^{(\ep)}_{\frac{2}{p_k}q_k}\right)^{p_k/2}
\leq C
\end{equation}
where the first inequality is simply Cauchy-Schwarz, and the second one is due to the fact that
we have
\begin{equation}
d\mu^{(\ep)}_{\frac{2}{p_k}q_k}\leq C \omega_{\varphi_\ep}^n
\end{equation}
because $\frac{q_k}{p_k} \ge \frac q 2$. This last inequality follows by induction given that $\frac{q_{k+1}}{p_{k+1}}=\frac{2n-p_k}{2n-np_k}\cdotp \frac{q_k}{p_k} $.
\end{proof}
\medskip

\subsection{A gradient estimate in the conic case}
 \begin{theo}
\label{thm:conic_grad}
Let $(X,\om)$ be a compact Kähler manifold, and let $\omvp:=\om+\ddc \vp$ be a Kähler metric satisfying $$\omvp^n= e^{\lambda \vp +F} \om^n$$ for some $F\in \mathscr C^{\infty}(X) $ and $\lambda\in \R$. We assume that there exists $C>0$ and a smooth function $\Psi, \Phi$ such that:
\begin{enumerate}
\item[$(i)$] $\sup_X |\vp| \le C$
\item[$(ii)$] $ \sup_X |\Psi|\le C$ and for any $\delta>0$, there exists $C_{\delta}$ such that
\begin{enumerate}
\item[$a.$] $\ddc \Psi \ge \delta^{-1}d\Psi \wedge d^c \Psi-C_{\delta} \om$
\item[$b.$] $\Delta_{\om} \Psi \ge \delta^{-1} |\nabla F|_{\om}-C_{\delta}$
\end{enumerate}
\item[$(iii)$] $i\Theta_{\om}(T_X) \ge -(C\om + \ddc \Psi) \otimes \mathrm{Id}$
\item[$(iv)$] $\omvp \le C \om$
\end{enumerate}
Then there exists a constant $A>0$ depending only on $C$ and $n$ such that $|\nabla \vp |_{\om} \le C$.
\end{theo}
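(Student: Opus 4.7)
The plan is to apply the maximum principle to the auxiliary function
$$H := \log u - \gamma(\vp) + A\Psi, \qquad u := |\nabla \vp|^2_\om,$$
where $A > 0$ and $\gamma : \R \to \R$ are to be chosen. Since $\vp$ and $\Psi$ are both bounded by $(i)$--$(ii)$, a bound for $H$ at its maximum point translates directly into a pointwise bound for $u$.

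The first ingredient is the Bochner identity applied to $u$ computed with respect to $\omvp$, which yields
$$\Delta_{\omvp} u \ge |\nabla'\partial \vp|^2_{\omvp,\om} + |\nabla''\partial \vp|^2_{\omvp,\om} + 2\,\mathrm{Re}\,\langle \partial \vp,\, \partial\,\mathrm{tr}_{\omvp}(\ddc\vp)\rangle_\om - \mathcal R(\nabla \vp,\overline{\nabla \vp}),$$
where $\mathcal R$ is built from the Chern curvature of $(T_X,\om)$. Differentiating the Monge--Amp\`ere equation rewrites the inner product in terms of $\lambda|\nabla \vp|^2_\om$ and $\langle \nabla \vp,\nabla F\rangle_\om$ up to a Ricci-of-$\om$ contribution, and hypothesis $(iii)$ provides the crucial lower bound $\mathcal R \ge -(Cn + \mathrm{tr}_{\omvp}(\ddc\Psi))\cdot u$. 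Combined with $\Delta_{\omvp}\log u = \Delta_{\omvp}u/u - |\nabla u|^2_{\omvp}/u^2$, this produces a schematic inequality
$$\Delta_{\omvp}\log u \ge -C - C\,\mathrm{tr}_{\omvp}(\ddc\Psi) - \frac{2|\langle \nabla \vp,\nabla F\rangle_\om|}{u} - \frac{|\nabla u|^2_{\omvp}}{u^2}.$$

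At an interior maximum $x_0$ of $H$, the identity $\nabla H(x_0) = 0$ gives $\nabla u/u = \gamma'(\vp)\nabla \vp - A\nabla \Psi$, while $\Delta_{\omvp} H(x_0) \le 0$ combined with the previous inequality and the identity $\Delta_{\omvp}\vp = n - \mathrm{tr}_{\omvp}\om$ yields, after regrouping,
$$A\,\mathrm{tr}_{\omvp}(\ddc \Psi) - \gamma''(\vp)|\nabla \vp|^2_{\omvp} \le C + C\,\mathrm{tr}_{\omvp}(\ddc\Psi) + C|\gamma'(\vp)|\,\mathrm{tr}_{\omvp}\om + \frac{|\nabla u|^2_{\omvp}}{u^2} + \frac{2|\langle \nabla \vp,\nabla F\rangle_\om|}{u}.$$
Choose $A \ge 2C$ so that $A\,\mathrm{tr}_{\omvp}(\ddc \Psi)$ dominates the symmetric term on the right. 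Expanding $|\nabla u|^2/u^2 \le 2|\gamma'(\vp)|^2|\nabla \vp|^2_{\omvp} + 2A^2|\nabla \Psi|^2_{\omvp}$, the $A^2|\nabla \Psi|^2_{\omvp}$ piece is absorbed via $(ii)(a)$: tracing $\ddc \Psi \ge \delta^{-1} d\Psi \wedge d^c\Psi - C_\delta \om$ against $g_\vp^{-1}$ yields $\mathrm{tr}_{\omvp}(\ddc\Psi) \ge \delta^{-1}|\nabla\Psi|^2_{\omvp} - C_\delta\,\mathrm{tr}_{\omvp}\om$, and the choice $\delta^{-1} = 4A$ closes this part. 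The $\nabla F$ term is handled symmetrically using Cauchy--Schwarz together with $(ii)(b)$, which controls $|\nabla F|_\om$ by $\delta \Delta_\om \Psi + \delta C_\delta$ and hence, via $(iv)$, by $\delta\,\mathrm{tr}_{\omvp}(\ddc\Psi) + O(1)$.

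After these absorptions the inequality reduces to $-\gamma''(\vp)|\nabla \vp|^2_{\omvp} \le C'$, with $C'$ depending only on the data. Choosing $\gamma$ so that $\gamma'' \le -c < 0$ on the compact interval $[-C,C]$ containing the range of $\vp$ (for instance $\gamma(t) = -t^2$) and using the bound $|\nabla \vp|^2_{\omvp} \ge C^{-1}|\nabla \vp|^2_\om = C^{-1} u$, which is a consequence of $(iv)$, we conclude $u(x_0) \le C''$; by the maximum property of $H$, $u$ is then bounded uniformly on $X$. The main delicacy is the bookkeeping by which the single constant $A$ simultaneously absorbs the curvature correction $\mathrm{tr}_{\omvp}(\ddc\Psi)$ coming from $(iii)$, the $|\nabla \Psi|^2_{\omvp}$ contribution coming from $|\nabla u|^2/u^2$ via $(ii)(a)$, and the $|\nabla F|_\om$ contribution via $(ii)(b)$; the freedom to pick $\delta$ arbitrarily small in $(ii)$ is exactly what enables this triple absorption.
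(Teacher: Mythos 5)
Your ansatz $H=\log|\nabla\vp|^2_\om-\gamma(\vp)+A\Psi$ is essentially the same as the paper's auxiliary function (the paper fixes $A=2$), so the strategy is right, but there is a genuine gap in the absorption step. After writing the Bochner identity you discard the positive Hessian-squared contribution $\frac{1}{u}|\nabla'\partial\vp|^2_{\omvp,\om}$ in your ``schematic inequality,'' and then try to dominate $|\nabla u|^2_{\omvp}/u^2$ by the crude bound $|\nabla u|^2_{\omvp}/u^2\le 2|\gamma'|^2|\nabla\vp|^2_{\omvp}+2A^2|\nabla\Psi|^2_{\omvp}$. That Cauchy--Schwarz step loses too much: to close, you would need $-\gamma''\ge 2|\gamma'|^2+c$ together with $\gamma'\ge c_1>0$ (needed to dominate the $\tr_{\omvp}\om$ terms generated by $(ii)$, since $\tr_{\omvp}\om$ is not bounded above by $(iv)$) over the full range of $\vp$. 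The comparison ODE $h'\le -2h^2$, $h=\gamma'$, forces $h$ to hit $c_1$ within an interval of length less than $\tfrac{1}{2c_1}$, so for $\mathrm{osc}(\vp)$ not small these two requirements are incompatible. Your proposed $\gamma(t)=-t^2$ does not even satisfy $\gamma'>0$ on $[0,C]$, so the $\gamma'\tr_{\omvp}\om$ term goes the wrong way.

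What the paper does instead (Step 3 of its proof) is the B\l{}ocki cancellation: it keeps the term $\frac{1}{\beta}\sum_j|\vp_{jp}|^2$ (with $\beta=|\nabla\vp|^2_\om$) and, using the critical-point identity $\beta_p/\beta=\gamma'\vp_p-2\Psi_p$ together with $\beta_p=\sum_j\vp_{jp}\vp_{\bar j}+\vp_p(u_{p\bar p}-1)$, shows via Cauchy--Schwarz that $\frac{1}{\beta}\sum_j|\vp_{jp}|^2-|\gamma'\vp_p-2\Psi_p|^2$ differs from zero only by a cross term controlled by $(iv)$, namely $-2\beta^{-1}|1-u_{p\bar p}|\,|\gamma'\vp_p-2\Psi_p|\,|\vp_p|$. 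This cancellation is what removes the quadratic $|\gamma'|^2$ dependence, after which the paper can afford to choose $\gamma$ quadratic with $\gamma'$ bounded below and $-\gamma''$ of size $\|\vp\|_\infty^{-1}$. To repair your argument, keep the Hessian-squared term in your Bochner inequality and perform this cancellation before applying Cauchy--Schwarz to $|\nabla u|^2_{\omvp}/u^2$.
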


As a corollary of this result, the gradient estimate \eqref{eq21} in Theorem~\ref{Yau} holds.

\begin{proof}[Proof of Theorem~\ref{Yau}]
\label{pageyau}
Let us rewrite the equation \eqref{eq23}
as
$$(\om_{\ep}+\ddc u_{\ep})^n=e^{\lambda u_\ep+ f_{\ep}}{\prod_{i\in I}
    (\ep^2 +|\sigma_i|^{2})^{e_i}}\om_{\ep}^n$$
where the reference metric $\om_{\ep}\in \{\om\}$ is an approximate conic metric along the divisor $B$, and $u_{\ep}$ differs from $\vp_{\ep}$ by a function whose $L^{\infty}$ norm as well as gradient and complex Hessian are uniformly bounded with respect to $\om_{\ep}$. Therefore it is sufficient to establish \eqref{eq21} for $u_{\ep}$. We check successively that conditions $(i)-(iv)$ are satisfied.

The bound $(i)$ follows from Ko\l odziej's estimate. It is straighforward when $\lambda=0$, and when $\lambda>0$, it requires an additional step easily achieved with Jensen inequality. Next, we choose $\Psi_{\ep}:=C(\sum _i (|\sigma_i|^{2}+\ep^2)^{\rho}+ \sum_j ( |s_j|^2+\ep^2)^{\rho})$ for $C$ large enough and $\rho>0$ small enough. Condition $(ii).a$ can be checked independently for each summand of $\Psi_{\ep}^{\alpha}$ of $\Psi_{\ep}$ in which case if follows from the fact that $\Psi_{\ep}^{\alpha}$ is uniformly quasi-psh (hence $C\om_{\ep}$-psh). Condition $(ii).b$ is an easy computation combined with \cite[Sect.~5.2]{GP}. Condition $(iii)$ is showed in \cite[Sect.~4]{GP}, while $(iv)$ is the content of \cite[Prop.~1]{GP}. To be more precise, \textit{op. cit.} assumes an upper and lower bound on $f_{\ep}+\sum e_i \log(|\sigma_i|^2+\ep^2)$ in order to get a two-sided inequality for $\om_{\vp}$; however one only needs an upper bound for the previous quantity if one only wishes to prove the one-sided inequality $(iv)$.
\end{proof}

\begin{proof}[Proof of Theorem~\ref{thm:conic_grad}]
Let $\beta:= |\nabla \vp|^2$ (computed with respect to $\om$) and $\alpha:= \log \beta - \gamma \circ \vp$ where $\gamma $ is a function to specify later. Without loss of generality, one can assume $\inf \vp =0$, and we set $\sup \vp =: C_0$. We use the local notation $(g_{i\bar j})$ for $\om$. We work at a point $y\in X$ where $\alpha+2\Psi$ attains its maximum, and we choose a system of geodesic coordinates for $\om$ such that $g_{i\bar j}(y)=\delta_{i \bar j}$, $dg_{i\bar j}(y)=0$, and $\vp_{i\bar j}$ is diagonal.
We set $u_{i\bar j}=g_{i\bar j}+\vp_{i\bar j}$ the components of the metric $\omvp$. As $\alpha_p= \frac{\beta_p}{\beta}-\gamma'\circ \vp \, \vp_p$ and $\alpha_p(y)=-2\Psi_p(y)$, one has
\begin{equation}
\label{beta}
\frac{\beta_p}{\beta}(y) =(\gamma'\circ \vp(y)) \vp_p(y)-2\Psi_p(y)
\end{equation}

\noindent
Moreover, some computations show that
$$\alpha_{p\bar p}=\frac{1}{\beta}\left(R_{j\bar k p \bar p} \vp_j \vp_{\bar k} +2 \mathrm{Re}\sum_ju_{p\bar p j}\vp_{\bar j}+\sum_j |\vp_{jp}|^2+\vp_{p\bar p}^2\right)-\frac{|\beta_p|^2}{\beta^2}-2\lambda-\gamma'' |\vp_p|^2-\gamma' \vp_{p\bar p}$$
Therefore at $y$, one gets from \eqref{beta} the following inequality:

\begin{eqnarray}
\label{eq:grad}
\alpha_{p\bar p}&\ge&\frac{1}{\beta}\left(R_{j\bar k p \bar p} \vp_j \vp_{\bar k} +2 \mathrm{Re}\sum_ju_{p\bar p j}\vp_{\bar j}+\sum_j |\vp_{jp}|^2+\vp_{p\bar p}^2\right)-\\
&&2\lambda-\gamma''|\vp_p|^2-\gamma' \vp_{p\bar p}-\big|\gamma' \vp_p-2\Psi_p\big|^2 \nonumber
\end{eqnarray}
so at $y$, the RHS is non-positive.\\

\noindent
\textit{Step 1. The curvature term}

\noindent
By the assumption $(iii)$, we have for all $a,b$: $R_{j\bar k p\bar q} a_j \bar a_{ k} b_p \bar b_{ q} \ge -(C|a_j|^2+\Psi_{j \bar k}a_j \bar a_{k})|b|^2$ and by symmetry of the curvature tensor, we get $R_{j \bar k p \bar q}a_j \bar a_{ k} b_p \bar b_{ q} \ge -(C|b_p|^2+\Psi_{p \bar q}b_p \bar b_{ q})|a|^2$. We apply that to $a= \nabla \vp$ and $b$ the vector with only non-zero component the $p$-th one, equal to $\sqrt{u^{p\bar p}}$, we get:
$u^{p\bar p}R_{j \bar k p \bar p}\vp_k \vp_{\bar l} \ge -(Cu^{p\bar p}+u^{p\bar p}\Psi_{p \bar p})|\nabla \vp|^2$. As a consequence,
\begin{equation}
\label{courbure}
\frac{1}{\beta} \sum u^{p \bar p} R_{j \bar k p \bar p}\vp_j \vp_{\bar k} \ge -C\sum u^{p\bar p}-\sum_p u^{p\bar p}\Psi_{p\bar p}
\end{equation}
Therefore, Equation \eqref{eq:grad} becomes, at $y\in X$:
\begin{eqnarray}
\Delta' (\alpha+\Psi) &\ge&(\gamma'-C) \tr_{\omvp}\om +\frac{1}{\beta}\sum_p u^{p\bar p}\left(2 \mathrm{Re}\sum_ju_{p\bar p j}\vp_{\bar j}+\sum_j |\vp_{jp}|^2\right) - \nonumber\\
&&\gamma''|\nabla^{\om} \vp |^2_{\omvp}-n \gamma'-\sum_p u^{p\bar p}\big|\gamma' \vp_p-2\Psi_p\big|^2 -C \label{eq:grad2} \\ \nonumber
\end{eqnarray}

\noindent
\textit{Step 2. The gradient term}

\noindent
The next term to analyze is
\begin{equation}
\label{gradient}
\frac{1}{\beta} \sum_p u^{p\bar p}\left( 2 \mathrm{Re}\sum_ju_{p\bar p j}\vp_{\bar j}\right)=\frac{2}{\beta}\mathrm{Re}\sum_j F_j \vp_{\bar j}
\end{equation}
by \cite[1.13]{Blocki09}, and this term is dominated (in norm) by $2 |\nabla F| \beta^{-1/2}$ and at the point $y$, $\beta$ can always be assumed to be larger than $1$ so that our term is bigger that $-2 |\nabla F|$. In particular, one gets at $y$:
\begin{eqnarray}
\label{eq:grad3}
\Delta' (\alpha+\Psi) &\ge&(\gamma'-C) \tr_{\omvp}\om +\sum_p u^{p\bar p}\left(\frac{1}{\beta}\sum_j |\vp_{jp}|^2-\big|\gamma' \vp_p-2\Psi_p\big|^2\right) -\\
&&\gamma''|\nabla^{\om} \vp |^2_{\omvp}-n \gamma'-2|\nabla F| -C \nonumber \\ \nonumber
\end{eqnarray}

\noindent
\textit{Step 3. Using the second derivatives}

%
\noindent
Recall that $\beta_p= \sum_j \vp_{jp}\vp_{\bar j}+\vp_p(u_{p\bar p}-1)$.
At $y$, $\frac{\beta_p}{\beta}-\gamma'  \vp_p=-2 \Psi_p$ so that at this point, one has
$$ \sum_j \vp_{jp}\vp_{\bar j}=( \gamma' \beta +1-u_{p\bar p})\vp_p-2\beta \Psi_p$$
hence $\left|\sum_j \vp_{jp}\vp_{\bar j}\right| = \beta\left|(\gamma'\vp_p-2\Psi_p)+\beta^{-1}(1-u_{p\bar p})\vp_p\right|$.
By Schwarz inequality, $\left|\sum_j \vp_{jp}\vp_{\bar j}\right|^2 \le \beta \sum_j |\vp_{jp}|^2$ and therefore
\begin{eqnarray*}
\label{eq:grad4}
\frac{1}{\beta}\sum_j |\vp_{jp}|^2-\big|\gamma' \vp_p-2\Psi_p\big|^2 &\ge& \left|(\gamma'\vp_p-2\Psi_p)+\beta^{-1}(1-u_{p\bar p})\vp_p\right|^2-\big|\gamma' \vp_p-2\Psi_p\big|^2 \\
&\ge & -2\beta^{-1}|1-u_{p\bar p}|\cdotp\big|\gamma' \vp_p-2\Psi_p\big|\cdotp |\vp_p|
\end{eqnarray*}
and by $(iv)$, $|1-u_{p\bar p}| \le C$, so that we get:
$$\sum_p u^{p\bar p} \left(\frac{1}{\beta}\sum_j |\vp_{jp}|^2-\big|\gamma' \vp_p-2\Psi_p\big|^2\right) \ge -C( \tr_{\omvp}\om+|\nabla \Psi|^2_{\omvp})$$
Combining this last inequality with \eqref{eq:grad3}, we get at $y$:
\begin{eqnarray*}
\label{eq:grad5}
0 &\ge& \Delta' (\alpha+2\Psi) \\
&\ge&(\gamma'-C) \tr_{\omvp}\om-\gamma''|\nabla^{\om} \vp |^2_{\omvp}-n \gamma'+\left( \Delta' \Psi-C|\nabla \Psi|^2_{\omvp}-2|\nabla F|_{\om}\right)-C
\end{eqnarray*}
As $\Psi$ is quasi-psh and $\omvp\le C\om$, we have $\Delta' \Psi\ge C^{-1} \Delta \Psi -C\tr_{\omvp}\om$ so by $(ii).b$, $\Delta' \Psi \ge 4 |\nabla F|_{\om}-C(1+\tr_{\omvp}\om)$. Using $(ii).a$, one ends up with the following inequality at $y$:
$$(\gamma'-C) \tr_{\omvp}\om-\gamma''|\nabla^{\om} \vp |^2_{\omvp}-n \gamma'\le C$$
Choosing $\gamma(t)=(C+1)t-||\vp||_{\infty}^{-1}t^2$ enables to conclude just as in \cite{Blocki09}.
\end{proof}

\begin{proof}[Proof of Theorem \ref{thmc}]
\label{thme}
It is a combination of our preceding considerations. The equation which gives $\omega_{\rm KE}$ fiberwise is
of the same type as \eqref{eq19} (with $\lambda= 0$). We conclude by Theorem \ref{Yau} and Theorem \ref{L2N}.
\end{proof}
\section{Existence of non-semipositive relative Ricci-flat Kähler metrics}
\label{nonpositive}
Let $p:X\to Y$ be a holomorphic fibration between projective manifolds of relative dimension $n\ge 1$. Let $\Yz$ be the set of regular values, and let $\Xz:=p^{-1}(\Yz)$. We assume that for $y\in \Yz$, $c_1(K_{X_y})=0$, where $X_y:=p^{-1}(y)$. Let $L$ be a pseudoeffective, $p$-ample $\mathbb Q$-line bundle on $X$.  One can write $L=H+p^*M$ for some ample line bundle $H$ on $X$ and for some line bundle $M$ on $Y$. In particular, one can find a smooth $(1,1)$-form $\om\in c_1(L)$ on $X$ such that for any $y\in Y°$, $\om_y:=\om|_{X_y}$ is a Kähler form on $X_y$.

\noindent
By Yau's theorem, there exists for any $y\in Y°$ a unique function $\vp_y \in \mathcal C^{\infty}(X_y)$ such that:
\begin{enumerate}
\item[$(i)$] $\theta_y:=\om_y+\ddc \vp_y$ is a Kähler form
\item[$(ii)$]  $\int_{X_y} \vp_y \om_y^n=0$
\item[$(iii)$]  $\Ric \theta_y = -\ddc \log \om_y^n=0$
\end{enumerate}
Moreover, one can use the implicit function theorem to check that the dependence of $\vp_y$ in $y$ is smooth, so that the form $\theta:=\om+\ddc \vp$ is a well-defined smooth $(1,1)$-form on $X°$ which is relatively Kähler. It is a folklore conjecture that the form $\theta$ is semipositive on $X$, say when $L$ is globally ample. Building on the results in the Appendix on page~\pageref{appendix}, we are able disprove this conjecture.

\begin{theo}
\label{counterexample}
There exists a projective fibration $p:X\to Y$ as in the setting above and an ample line bundle $L$ on $X$ such that the relative Ricci-flat metric $\theta$ on $X°$ associated with $L$ is not semipositive.
\end{theo}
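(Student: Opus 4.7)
The plan is to construct $X$ as a K3 surface equipped with an elliptic fibration $p:X\to \mathbb{P}^1$ whose singular fibers are irreducible and reduced and which is non-isotrivial, and which moreover admits a second transverse elliptic fibration $q:X\to \mathbb{P}^1$ (this is what the Appendix provides). Using the second fibration, I would form the $\mathbb{Q}$-line bundle $L_0 := p^*\mathcal{O}_{\mathbb{P}^1}(1) + q^*\mathcal{O}_{\mathbb{P}^1}(1)$, or a suitable combination, designed so that $L_0$ is semiample, $p$-ample, and has numerical dimension $\mathrm{nd}(L_0)=1$ (i.e.\ $L_0$ is not big, but $L_0^2\not\equiv 0$ fails while $L_0\cdot F>0$ on fibers). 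The fiberwise Ricci-flat metric $\theta_0$ on $X°$ in the class $c_1(L_0)$ exists and is smooth there.

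Next I would argue by contradiction: assume $\theta_0\ge 0$ on $X°$. Since the singular fibers of $p$ are irreducible and reduced, the analytic subset $X\smallsetminus X°$ has codimension~$2$, so a standard extension argument (using locally bounded quasi-psh potentials, as in the proof of Theorem~\ref{thm:cyvar}) extends $\theta_0$ to a closed positive current $\ol\theta_0 \in c_1(L_0)$ on all of $X$. Because $L_0$ has numerical dimension one and is not big, results of Boucksom on non-nef/non-big positive currents (the fact that $\mathrm{nd}(\ol\theta_0)\le \mathrm{nd}(L_0)=1$) force
\begin{equation*}
\ol\theta_0\wedge \ol\theta_0 \equiv 0 \quad\text{on } X°.
\end{equation*}
Coupled with $\theta_0$ being $p$-Kähler, this means that $\ker(\theta_0)$ defines a smooth, one-dimensional, holomorphic distribution on $X°$ which is transverse to the fibers of $p$. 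The horizontal lift computation of \eqref{hlift}, applied to $\theta_0$, shows that this distribution is integrable, producing holomorphic horizontal sections. Their flow identifies the fibers of $p$ biholomorphically, contradicting the non-isotriviality of $p$.

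The last step is to pass from the semiample class $c_1(L_0)$ to a genuine Kähler class. Fix any Kähler form $\om_X$ on $X$ and set $\om_\varepsilon := \om_0 + \varepsilon\, \om_X$, where $\om_0$ is a smooth semipositive form in $c_1(L_0)$. Each class $[\om_\varepsilon]$ is Kähler, and for each $\varepsilon>0$ there is a relative Ricci-flat representative $\theta_\varepsilon \in [\om_\varepsilon]$ on $X°$; by uniqueness and standard stability for Monge-Ampère equations (e.g.\ Ko\l odziej's $L^\infty$ and stability estimates applied fiberwise together with the smooth dependence on parameters away from the poles), $\theta_\varepsilon \to \theta_0$ as $\varepsilon\to 0$, locally smoothly on $X°$. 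If every $\theta_\varepsilon$ were semipositive on $X°$, the limit $\theta_0$ would be semipositive as well, contradicting what was proved in the previous step. Hence for some $\varepsilon>0$, the relative Ricci-flat metric in the Kähler class $[\om_\varepsilon]$ fails to be semipositive, proving Theorem~\ref{counterexample}.

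The main obstacle, in my view, is the rigidity step: showing that $\ol\theta_0^2\equiv 0$ combined with $p$-ampleness really forces the kernel foliation to be holomorphic and integrable and to trivialize the family. The currential extension is quite technical (one must verify both codimension and mass arguments for the positive current across the singular fibers, hence the importance of irreducibility and reducedness of the singular fibers, and of the codimension hypothesis), but the conceptual heart of the argument is the implication: non-big $+$ semipositive relative Ricci-flat $\Rightarrow$ isotriviality — which is precisely the obstruction that the K3 construction is designed to violate.
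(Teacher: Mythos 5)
Your overall strategy (the K3 surface with two transverse elliptic fibrations from the Appendix, assume the positivity conjecture, extend the current across singular fibers, use rank considerations to produce a trivializing holomorphic lift, and finally perturb from a semiample class to a Kähler class) is the same as the paper's. However, there are two genuine gaps, plus one slip. The slip: your default $L_0 = p^*\mathcal{O}_{\P^1}(1) + q^*\mathcal{O}_{\P^1}(1)$ has $L_0^2 = 2\,p^*\mathcal O(1)\cdot q^*\mathcal O(1)>0$ (the fibrations are transverse), so it is big; the correct choice, and the one the paper uses, is $L = q^*\mathcal O_{\P^1}(1)$ alone, for which $L^2=0$.

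The first real gap is the assertion that $X\ssm X°$ has codimension $2$. The singular fibers of $p$ are curves in a surface, so $X\ssm X°$ has codimension \emph{one}, and no codimension-two extension argument applies. The extension of $\theta$ as a closed positive current across $X\ssm X°$ is instead supplied by Proposition~\ref{greenA} of the Appendix: the reducedness and irreducibility of the singular fibers are used to get a uniform Poincar\'e constant (Yoshikawa), hence a uniform upper bound on the fiberwise Ricci-flat potentials, and then Grauert--Remmert extension applies. The second gap is the rigidity step, which you yourself flagged as the main obstacle. From $\ol\theta_0^2\equiv 0$ and $p$-K\"ahlerness you get that $\ker\theta_0$ is a smooth rank-one distribution transverse to $p$, and closedness gives integrability, but \emph{holomorphicity} of this distribution is not automatic and is not supplied by the horizontal-lift formula \eqref{hlift} on its own. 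The paper closes this gap by exploiting the specific choice $L=q^*\mathcal O_{\P^1}(1)$: since $c_1(L)=q^*c_1(\mathcal O_{\P^1}(1))$ and $q$ has connected fibers, any positive current in $c_1(L)$ is of the form $q^*\gamma$, so $\ker\theta$ coincides on the common smooth locus with the holomorphic foliation $\mathcal F$ tangent to the fibers of $q$. It is this identification, not the numerical-dimension argument, that makes the kernel holomorphic and lets one conclude that the horizontal lift $V$ is itself holomorphic, contradicting the non-isotriviality of $p$.
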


\begin{rema}
The counter-example is actually pretty explicit: $X$ is a $K3$ surface and $p$ is an elliptic fibration onto $Y=\P^1$.
\end{rema}

\begin{proof}[Proof of Theorem~\ref{counterexample}]
We proceed in three steps, arguing by contradiction. That is, we assume that the folklore conjecture recalled above is true for any such fibration $p:X\to Y$. \\

\noindent
\textbf{Step 1. Choice of the fibration.}

\noindent
We consider a K3 surface $X$ provided by Proposition~\ref{exist}. Its (singular) fibers are irreducible and reduced. Moreover, $X$ admits a semi-ample line bundle $L$ which is $p$-ample and has numerical dimension one. Indeed, $L$ can be chosen as the pull-back of $\mathcal O_{\mathbb P^1}(1)$ by  another elliptic fibration $q:X\to \mathbb P^1$.  Moreover, one knows that $p$ is not isotrivial, in the sense that two general fibers $X_y$, $X_{y'}$ of $p$ are not isomorphic.  \\

\noindent
\textbf{Step 2. Reduction to the semi-ample case.}

\noindent
 Let us pick $A$ an ample line bundle on $X$, $\om_A \in c_1(A)$ a Kähler form, and let us consider the relative Ricci-flat form $\theta_{\ep}$ on $X°$ associated with the the pair $(L+\ep A, \om+\ep \om_A)$. The line bundle $L_{\ep}$ is ample, hence it follows from our assumption that for any $\ep>0$, the relative Ricci-flat metric satisfies $$\theta_{\ep} \ge 0 \quad \mbox{on } X°.$$ We are going to show that $\theta_{\ep}$ converges weakly on $X°$ to the current $\theta:=\theta_0$. As a result, this will force $\theta$ to be semipositive on $X°$.

Let us write $\theta_{\ep}=\om+\ep\om_A+\ddc \vp_{\ep}$ where $\vp_{\ep}$ is normalized such that for each $y\in Y°$, one has $$\int_{X_y}\vp_{\ep} (\om+\ep\om_A)=0.$$ If $C_{\ep}$ is the constant (converging to $0$) defined by $$e^{C_{\ep}}= \frac{[X_y] \cdotp c_1(L)}{[X_y] \cdotp c_1(L+\ep A)}$$ for any $y\in Y°$, then one has on $X_y$ the following equation:
$$ \om+\ep\om_A+\ddc \vp_{\ep} = e^{C_{\ep}} \cdotp (\om+\ddc \vp)$$
The family of potentials $(\vp_{\ep}|_{X_y})_{\ep, y}$ is normalized in a smooth way with respect to $\ep$ and $y$, and satisfies linear equations depending smoothly on the parameters as well. It is not difficult to see that the standard estimates hold uniformly in $\ep$ and $y$ (as long as $y$ evolves in compact subsets of $Y°$), hence uniqueness imposes that $\vp_{\ep} \to \vp$ smoothly in each $X_y$, locally uniformly in $y\in Y°$. In particular, $\vp_{\ep} $ converges weakly to $\vp$ in $L^1_{\rm loc}(X°)$. \\

\noindent
\textbf{Step 3. End of the proof.}

\noindent
Thanks to Step 2, the relative Ricci-flat metric $\theta=\om+\ddc \vp$ is semipositive on $X°$. Moreover, it follows from Proposition~\ref{greenA} that $\vp$ is bounded above near $X\setminus X°$, hence $\theta$ extends to a semi-positive current $\theta \in c_1(L)$ on the whole $X$.
Let $\mathcal F \subset T_X$ be the holomorphic foliation induced by the fibration $q : X\rightarrow \mathbb P^1$.
As the semi-positive current $\theta$ is in the class of $c_1 (L)= q^* (c_1 (\mathcal O_{\mathbb P^1}(1)))$ and $q$ has connected fiber, it follows that there exists a positive current $\gamma \in c_1 (\mathcal O_{\mathbb P^1}(1))$ such that $\theta=q^*\gamma$. In particular, if $X^1\subset X$ denotes the locus where $q$ is smooth and if $\Omega:=X°\cap X^1$, then $\mathcal F |_{\Omega}$ is contained in
the kernel $\mathrm{Ker} \, \theta$ on $\Omega$. As both foliations are smooth and have rank one on $\Omega$, one has
\begin{equation}
\label{egalite}
\mathcal F |_{\Omega}=\mathrm{Ker} \, \theta|_{\Omega}.
\end{equation}

Next, let us pick a trivializing open set $U\simeq \Delta \subset Y°$, and let $V\in \mathcal C^{\infty}(X°,  T_X^{1,0})$ be the lift of $\frac{\partial}{\partial t}$ with respect to $\theta$ over $U$, cf e.g. \cite[Sect.~1.1]{Gue16}. One knows that in a trivializing chart $(z,t)$ defined on a subset of $p^{-1}(U)$ such that $p(z,t)=t$, the vector field $V$ can be written as $$V=\frac{\partial}{\partial t}+ a(z,t) \frac{\partial}{\partial z}$$ for some smooth function $a$. The function $c:=\theta(V,V)$ satisfies the identity $\theta^2= c \, \theta \wedge idt\wedge d\bar t$, hence it vanishes identically on $p^{-1}(U)$, that is, $$V\in \mathcal C^{\infty}(p^{-1}(U),\mathrm{Ker} \, \theta).$$ Thanks to \eqref{egalite}, this shows that for any $x\in p^{-1}(U)\cap \Omega$, one has $\mathbb C \cdotp V(x) = \mathcal F_x$. In particular, there exists a non-vanishing, smooth function $f$ on $p^{-1}(U)\cap \Omega$ such that $fV$ is holomorphic on $p^{-1}(U)\cap \Omega$. Now in local coordinates, this means that
$$0=\bar \partial (fV)= \bar \partial f \otimes \frac{\partial}{\partial t}+ \bar \partial (fa) \otimes \frac{\partial }{\partial z}$$
hence $\bar \partial f=0$. As a result, the smooth vector field $V$ on $p^{-1}(U)$ is holomorphic on $p^{-1}(U)\cap \Omega$, hence on the whole $p^{-1}(U)$. Therefore, its flow induces a local biholomorphism between any two near fibers. In particular, any two smooth fibers over $U$ would be isomorphic, which contradicts the non-isotriviality of $p$.
\end{proof}

\newpage

\pagestyle{empty}
\addtocontents{toc}{\protect\setcounter{tocdepth}{0}}
\section*{Appendix by Valentino Tosatti\protect\footnote[2]{Department of Mathematics, Northwestern University, Evanston, IL 60208, USA \newline \emph{email}: tosatti@math.northwestern.edu \newline V.T was partially supported by NSF grant DMS-1610278 and by a Chaire Poincar\'e at Institut Henri Poincar\'e funded by the Clay Mathematics Institute. The author is grateful to Matthias Sch\"utt and Chenyang Xu for discussions, and to the Institut Henri Poincar\'e for the gracious hospitality.}}
\addtocontents{toc}{\protect\setcounter{tocdepth}{1}}
\label{appendix}
\addcontentsline{toc}{section}{Appendix by Valentino Tosatti}
\bigskip

Let $(X^{n},\omega_X)$ be a compact K\"ahler manifold, $Y$ a compact Riemann surface, and $f:X\to Y$ a surjective holomorphic map with connected fibers. Let $Y^0$ be the locus of regular values for $f$, whose complement in $Y$ is a finite set, and $X^0=f^{-1}(Y^0)$, which is Zariski open in $X$, so that $f:X^0\to Y^0$ is a proper holomorphic submersion. We will call the fibers over points in $Y\backslash Y^0$ the singular fibers of $f$.

Suppose that for every $y\in Y^0$ we have a smooth function $\rho_y$ on the fiber $X_y=f^{-1}(y)$ which satisfies
\begin{equation}\label{cond}
\omega_X|_{X_y}+\sqrt{-1}\ddbar\rho_y\geq 0,\quad \int_{X_y}\rho_y (\omega_X|_{X_y})^n=0.
\end{equation}

\begin{app}\label{greenA}
If all the singular fibers of $f$ are reduced and irreducible, then there is a constant $C$ such that
$$\sup_{X_y}\rho_y\leq C,$$
holds for all $y\in Y^0$.
\end{app}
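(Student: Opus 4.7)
I would argue by contradiction and compactness. Suppose there is a sequence $y_k \in Y^0$ with $M_k := \sup_{X_{y_k}} \rho_{y_k} \to +\infty$. By compactness of $Y$ we may assume $y_k \to y_0 \in Y$; smooth dependence of $\rho_y$ on $y$ over $Y^0$ forces $y_0 \in Y\setminus Y^0$, so that $X_{y_0}$ is a singular fiber, which by hypothesis is reduced and irreducible. Set $\tilde u_k := \rho_{y_k} - M_k \le 0$; then $\tilde u_k$ is $\omega_{y_k}$-plurisubharmonic on $X_{y_k}$ (writing $\omega_{y_k} := \omega_X|_{X_{y_k}}$) with $\sup \tilde u_k = 0$. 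Combining $\int_{X_{y_k}} \rho_{y_k}\, \omega_{y_k}^n = 0$ with the cohomological identity $V := \int_{X_{y_k}} \omega_{y_k}^n = [\omega_X]^n \cdot [X_y]$ yields
\[
\| \tilde u_k \|_{L^1(X_{y_k},\omega_{y_k}^n)} \;=\; -\int_{X_{y_k}} \tilde u_k\, \omega_{y_k}^n \;=\; M_k V \;\to\; +\infty.
\]
The plan is thus to derive a contradiction by establishing a uniform $L^1$-bound $\|\tilde u_k\|_{L^1} \le C$.

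The uniform bound is obtained by splitting $X_{y_k}$ into the part away from the finite singular locus $\Sigma := \mathrm{Sing}(X_{y_0}) \subset X$ and a small neighborhood $W \supset \Sigma$. For the first piece, I would cover $X_{y_0} \setminus W$ by finitely many coordinate charts $U_\alpha \subset X \setminus \Sigma$ on which $f$ is a submersion; a local trivialization of $f$ over each $U_\alpha$ identifies $X_{y_k}\cap U_\alpha$, for $y_k$ close to $y_0$, with a fixed bounded open set in $\mathbb{C}^n$. After adding a smooth local potential for $\omega_X$, the functions $\tilde u_k |_{X_{y_k}\cap U_\alpha}$ pull back to a family of genuinely plurisubharmonic functions with uniformly bounded supremum on a fixed bounded domain, so classical Hartogs-type $L^1_{\mathrm{loc}}$ compactness produces uniform bounds $\|\tilde u_k\|_{L^1(X_{y_k}\cap U_\alpha)} \le C_\alpha$, and summing over the finite cover gives $\|\tilde u_k\|_{L^1(X_{y_k}\setminus W)} \le C$.

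The hard part will be controlling the contribution from $W$, and this is where the hypothesis that singular fibers are reduced and irreducible enters essentially. Irreducibility of $X_{y_0}$ prevents the smooth fibers $X_{y_k}$ from breaking into pieces joined by a long neck as $y_k \to y_0$, while reducedness rules out multiplicities and the associated thin-tube degenerations; together these ensure that near each point $p \in \Sigma$ the local analytic model of $f$ is, up to biholomorphism of the germ $(X,p)$, a smoothing of an isolated hypersurface singularity with well-behaved geometry. Using this explicit local model, one would check (i) that $\int_{X_{y_k} \cap W} \omega_{y_k}^n \to 0$ uniformly in $k$ as $W$ shrinks, and (ii) a uniform local $L^1$-bound for the pulled-back $\tilde u_k$ near $p$ via Hartogs compactness applied to the local model. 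This yields $\|\tilde u_k\|_{L^1(X_{y_k}\cap W)} \le C$, contradicting $M_k V \to +\infty$. In the setting of Theorem~\ref{counterexample} the relevant fibrations are elliptic, so Kodaira's classification restricts the reduced irreducible singular fibers to types $I_1$ and $II$, with explicit local models $\{xy=t\}$ and $\{x^2+y^3=t\}$, for which the required local $L^1$ estimates can be verified by direct computation.
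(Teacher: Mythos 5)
Your approach (compactness/contradiction via normalized potentials and $L^1$ bounds) is genuinely different from the paper's, which applies a Cheng--Li Green's function argument: the paper shows that the fiberwise Sobolev, volume and diameter bounds are uniform, invokes Yoshikawa's theorem to get a \emph{uniform Poincaré constant} from the reduced-and-irreducible hypothesis, deduces a uniform lower bound $G_y(x,x')\geq -A$ for the Green's function, and then bounds $\sup\rho_y$ directly from Green's representation formula and the Laplacian lower bound $\Delta_{g_y}\rho_y\geq -n+1$.

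However, your proposal has a genuine gap in the ``first piece''. You claim that on each chart $U_\alpha$ away from $\Sigma$, the functions $\tilde u_k|_{X_{y_k}\cap U_\alpha}$ are ``plurisubharmonic with uniformly bounded supremum'' and hence precompact in $L^1_{\rm loc}$. But the normalization only gives a \emph{global} supremum bound $\sup_{X_{y_k}}\tilde u_k=0$; on any particular chart $U_\alpha$, the local supremum $\sup_{X_{y_k}\cap U_\alpha}\tilde u_k$ has an upper bound by $0$ but has no a priori lower bound. If the maximum of $\tilde u_k$ is attained elsewhere (for instance near the singular point), the standard compactness dichotomy for quasi-psh families allows $\tilde u_k|_{U_\alpha}\to -\infty$ locally uniformly, in which case $\|\tilde u_k\|_{L^1(U_\alpha)}\to\infty$ rather than being bounded. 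Propagating the sup bound from the chart where it is attained to the other charts is precisely a chaining/Harnack step, and the uniformity of that chaining as $y_k\to y_0$ is exactly the analytic content that the reduced-and-irreducible hypothesis must furnish. In your sketch the hypothesis is invoked only heuristically (``no long necks'', ``no thin tubes''), without converting it into a concrete estimate; by contrast the paper extracts from it a uniform Poincaré constant (Yoshikawa, \cite{Yoshi}, cf.\ \cite[Prop.\ 3.2]{RZ0}), which is precisely the quantitative replacement of such chaining. The second piece of your argument (the local analysis near $\Sigma$, and the uniform $L^1$ estimate for the explicit local models) is also left unverified; moreover the proposition applies to arbitrary fibrations from a compact K\"ahler manifold to a Riemann surface, not just to elliptic $K3$ surfaces, so Kodaira's classification alone would not suffice for the general statement. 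To complete your route you would at minimum need a uniform local (or doubling) bound on the sup of $\tilde u_k$ across overlapping charts, which would amount to reproving the uniform Poincaré inequality by other means.
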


\begin{proof}
Let $\omega_y=\omega_X|_{X_y}$, and $g_y$ be its Riemannian metric, where in the following we fix any $y\in Y^0$.
Thanks to \eqref{cond}, on $X_y$ we have
\begin{equation}\label{d1}
\Delta_{g_y}\rho_y\geq -n+1.
\end{equation}
We have that $\mathrm{Vol}(X_y,g_y)=c$, a constant independent of $y$, and that the Sobolev constant of $(X_y,g_y)$ has a uniform upper bound independent of $y$ thanks to the Michael-Simon Sobolev inequality \cite{MS}, see the details e.g. in \cite[Lemma 3.2]{Tos10}. Furthermore, $\mathrm{diam}(X_y,g_y)\leq C$, a constant independent of $y$, thanks to \cite[Lemma 3.3]{Tos10}.

So far we have not used the assumptions that all singular fibers are reduced and irreducible. This is used now to prove that the Poincar\'e constant of $(X_y,g_y)$ also has a uniform upper bound independent of $y$, as shown by Yoshikawa \cite{Yoshi} (see also the much clearer exposition in \cite[Proposition 3.2]{RZ0}).

At this point we can use a classical argument of Cheng-Li \cite{ChengLi}, which is clearly explained in \cite[Chapter 3, Appendix A, pp.137-140]{Siu87}, to deduce that the Green's function $G_y(x,x')$ of $(X_y,g_y)$, normalized by
$$\int_{X_y}G_y(x,x') \omega_y(x')=0,$$ satisfies the bound
\begin{equation}\label{d2}
G_y(x,x')\geq -A,
\end{equation}
for all $y\in Y^0$ and for all $x,x'\in X_y$, with a uniform constant $A$. The point of that argument is that $A$ only depends on the constant in the Sobolev-Poincar\'e inequality, that here as we said we control uniformly, on the dimension and on bounds for the volume and diameter, which we all have.

We can now apply Green's formula on $X_y$. Choose a point $x\in X_y$ such that $\rho_y(x)=\sup_{X_y}\rho_y$, and then, using that $\rho_y$ has average zero, together with \eqref{d1} and \eqref{d2}, we obtain
\[\begin{split}
\rho_y(x)&=-\int_{X_y}\Delta_{g_y}\rho_y(x') G_y(x,x')\omega_y(x')\\
&=-\int_{X_y}\Delta_{g_y}\rho_y(x') (G_y(x,x')+A)\omega_y(x')\\
&\leq (n-1)\int_{X_y}(G_y(x,x')+A)\omega_y(x')\\
&\leq (n-1)A\mathrm{Vol}(X_y,g_y).
\end{split}\]
\end{proof}

We now specialize to the setting where $X$ is a $K3$ surface, $Y=\mathbb{P}^1$ and $f:X\to \mathbb{P}^1$ is an elliptic fibration. We further assume that $\rho_y$ is chosen so that
$\omega_X|_{X_y}+\sqrt{-1}\ddbar\rho_y>0$ is the unique flat metric on $X_y$ cohomologous to $\omega_X|_{X_y}$ (and we still assume that $\rho_y$ has fiberwise average zero). In this case
$\rho_y$ varies smoothly in $y\in Y^0$, and so it defines a smooth function $\rho$ on $X^0$. Thanks to Proposition \ref{green}, we conclude that
$$\sup_{X^0}\rho\leq C.$$
This, together with the Grauert-Remmert extension theorem, immediately gives:
\begin{appcoro}
In this setting, if we have that $\omega_X+\sqrt{-1}\ddbar\rho\geq 0$ on $X^0$, then this extends to a closed positive current on all of $X$, in the class $[\omega_X]$.
\end{appcoro}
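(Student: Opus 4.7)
The corollary follows quickly from Proposition~\ref{greenA} via a standard pluripotential-theoretic extension. My plan is as follows.

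First, I would observe that by the standing hypothesis $\omega_X + \sqrt{-1}\ddbar\rho \geq 0$ on $X^0$, the smooth function $\rho$ is (locally) $\omega_X$-plurisubharmonic on the Zariski-open set $X^0$. Since $\rho(x) = \rho_{f(x)}(x)$ for each $x \in X^0$, Proposition~\ref{greenA} (whose hypotheses are in force, as the singular fibers of the chosen elliptic fibration are reduced and irreducible) applied fiberwise yields the uniform upper bound $\sup_{X^0}\rho \leq C$.

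Next, I would note that $A := X \setminus X^0$ is a finite union of singular fibers, hence a proper analytic subset of $X$ (of pure codimension one). Since $\rho$ is $\omega_X$-psh on $X \setminus A$ and locally bounded above near $A$, the classical extension theorem for quasi-plurisubharmonic functions across analytic subsets produces a unique extension
\[
\tilde\rho(x) := \begin{cases} \rho(x), & x \in X^0, \\ \displaystyle\limsup_{X^0 \ni z \to x} \rho(z), & x \in A, \end{cases}
\]
which is $\omega_X$-psh on all of $X$. Consequently $\omega_X + \sqrt{-1}\ddbar\tilde\rho$ is a closed positive $(1,1)$-current on $X$ in the class $[\omega_X]$ that restricts on $X^0$ to the original form $\omega_X + \sqrt{-1}\ddbar\rho$, as required.

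There is no substantive obstacle: all the real content of the statement is already absorbed into the uniform upper bound furnished by Proposition~\ref{greenA}. The one point worth flagging is that $A$ has codimension one in $X$ — which would obstruct a Hartogs-type extension of holomorphic data — but for a \emph{bounded-above} quasi-psh function the codimension plays no role; what is used is only that $A$ is a proper analytic (hence pluripolar) subset. This is presumably what the authors have in mind when they invoke the ``Grauert--Remmert extension theorem'' in the sentence preceding the corollary.
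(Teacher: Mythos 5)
Your argument is correct and matches the paper's: the uniform upper bound from Proposition~\ref{greenA} combined with the extension theorem for quasi-psh functions bounded above near an analytic set (the Grauert--Remmert theorem, as you and the authors both note) yields the extension. No substantive difference in approach.
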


Lastly, we need the following examples:

\begin{app}\label{exist}
There exists a complex projective $K3$ surface $X$ which admits two elliptic fibrations, one of which is non-isotrivial and has only reduced and irreducible singular fibers.
\end{app}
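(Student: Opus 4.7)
My plan is to produce $X$ explicitly as a double cover of $\mathbb P^1\times\mathbb P^1$. Let $B\subset \mathbb P^1\times\mathbb P^1$ be a smooth curve of bidegree $(4,4)$ and let $\pi\colon X\to \mathbb P^1\times\mathbb P^1$ be the double cover branched along $B$. Since $B\in|{-2K_{\mathbb P^1\times\mathbb P^1}}|$, the standard double-cover formulas yield $K_X=\pi^*(K_{\mathbb P^1\times\mathbb P^1}+\tfrac12 B)=0$ and $h^1(X,\mathcal O_X)=0$, so $X$ is a smooth projective $K3$ surface. Composing $\pi$ with the two projections $\mathrm{pr}_i\colon\mathbb P^1\times\mathbb P^1\to\mathbb P^1$ produces two maps $f_i:=\mathrm{pr}_i\circ\pi\colon X\to\mathbb P^1$; the fibre $f_1^{-1}(t)$ is the double cover of $\{t\}\times\mathbb P^1\simeq\mathbb P^1$ branched at the four points of $B\cap(\{t\}\times\mathbb P^1)$, hence has arithmetic genus one. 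So $f_1$ and $f_2$ are two genuine elliptic fibrations on $X$.

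The key step is to analyse the singular fibres of $f_1$. A fibre $f_1^{-1}(t)$ degenerates precisely when the ruling $\{t\}\times\mathbb P^1$ fails to be transverse to $B$, equivalently when $t$ is a critical value of the degree-four projection $q\colon B\to\mathbb P^1$ onto the first factor. Adjunction for a $(4,4)$-curve gives $g(B)=9$, so Riemann--Hurwitz yields $2g(B)-2+2\cdot 4=24$ ramification points of $q$ counted with multiplicity, matching $e(X)=24$. For a very general $B$ in the linear system $|\mathcal O_{\mathbb P^1\times\mathbb P^1}(4,4)|$, a standard jet-transversality argument would show that every critical point of $q$ is of Morse type, meaning that exactly two of the four branch points coalesce simply, with the other two staying distinct. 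The associated singular fibre of $f_1$ is then the double cover of $\mathbb P^1$ branched at a divisor of the form $2p+q_1+q_2$ with $p,q_1,q_2$ pairwise distinct, which is a rational nodal curve, i.e.\ a Kodaira fibre of type $\mathrm I_1$: reduced and irreducible. Moreover $B$, being irreducible of bidegree $(4,4)$, contains no vertical $(0,1)$-ruling, which rules out any non-reduced fibre of $f_1$ a priori.

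Non-isotriviality of $f_1$ then follows immediately: the $j$-function $t\mapsto j(f_1^{-1}(t))$ equals $\infty$ at each nodal value but stays finite elsewhere, so is non-constant, whence the smooth fibres cannot all be mutually isomorphic. The only non-routine point in this outline is the jet-transversality used above; I would handle it by an elementary incidence-correspondence dimension count on the universal $(4,4)$-curve, showing that the loci of branch curves having a $2+2$ double-tangency to some ruling (producing reducible $\mathrm I_2$ fibres) or a quadruple tangency to some ruling (the $4$-branching case) are proper closed subvarieties of $|\mathcal O_{\mathbb P^1\times\mathbb P^1}(4,4)|$. Granting this, the proposition is proved by taking $X$ to be the double cover associated with any very general such $B$.
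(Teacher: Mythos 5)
Your construction is correct and takes a genuinely different route from the paper's. The paper takes $X$ to be a general hypersurface of degree $(3,2)$ in $\mathbb{P}^2\times\mathbb{P}^1$; one fibration is the projection to $\mathbb{P}^1$, and the second is obtained by composing with the involution of the double cover $X\to\mathbb{P}^2$. The crucial step of excluding reducible or non-reduced fibres is done by a completely different mechanism: the paper cites that such a general $X$ has Picard number $2$, and then applies the Shioda--Tate formula to the Jacobian fibration $j\colon J\to\mathbb{P}^1$ (which shares the fibre types of $f$ but has a section), forcing every fibre to be irreducible, and invokes a general fact about elliptic $K3$ surfaces for reducedness. Your route instead uses a double cover $X\to\mathbb{P}^1\times\mathbb{P}^1$ along a $(4,4)$-curve, with the two fibrations given by the two rulings, and controls the fibre types directly by a genericity/Morse analysis of tangencies of $B$ with the rulings. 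Both are valid, and the trade-off is real: the Shioda--Tate argument is conceptually cleaner and needs no transversality work at all, whereas your argument is more elementary and self-contained but does require the dimension count you flag at the end to be carried through (it is routine --- each relevant stratum of $|\mathcal O(4,4)|\cong\mathbb{P}^{24}$ is of codimension at least one, e.g.\ the $2{+}2$-tangency locus is cut out by $4$ conditions on a $3$-parameter family of auxiliary data, hence has dimension $\le 23$). Your $j$-invariant argument for non-isotriviality is tidy and arguably slicker than the paper's appeal to ``clearly not isotrivial provided $X$ is general.'' Incidentally, your $X$ also has Picard number $2$ for very general $B$, so you could have closed the fibre-type question via Shioda--Tate just as the paper does, bypassing the jet-transversality step entirely.
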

\begin{proof}
Let $X\subset\mathbb{P}^2\times\mathbb{P}^1$ be a general hypersurface of degree $(3,2)$. It is known that $X$ has Picard number $2$ \cite[Section 5.8]{vG}. The projection to the $\mathbb{P}^1$ factor gives an elliptic fibration on $X$, which is clearly not isotrivial provided $X$ is general.

To obtain the other fibration we compose the first fibration with the automorphism $\sigma$ of $X$ obtained as follows. Projecting $X$ to the $\mathbb{P}^2$ factor shows that $X$ is a double cover of $\mathbb{P}^2$ ramified along a sextic, and the covering involution of this cover is the $\sigma$ that we want.

Explicitly, if we let $L=\mathcal{O}_{\mathbb{P}^2}(1)|_X$, $M=\mathcal{O}_{\mathbb{P}^1}(1)|_X$, the the first elliptic fibration is defined by $|M|$ and the second elliptic fibration by $|3L-M|$ (since $\sigma^*M=3L-M$).

Lastly, we show that every elliptic fibration on $X$ has only reduced and irreducible singular fibers. Given an elliptic fibration $f:X\to\mathbb{P}^1$, let $j:J\to\mathbb{P}^1$ be its Jacobian family \cite[Section 11.4]{HuyK3}. Then $J$ is also an elliptic $K3$ surface, every fiber of $j$ is isomorphic to the corresponding fiber of $f$, $J$ has the same Picard number as $X$, but $j$ always has a section. We can then apply the Shioda-Tate formula \cite[Corollary 11.3.4]{HuyK3} to $j$ to obtain
$$2=\rho(J)=2+\sum_{t\in\mathbb{P}^1}(r_t-1)+\mathrm{rank}\ \mathrm{MW}(j),$$
where $r_t$ is the number of irreducible components of the fiber $J_t$ and $\mathrm{MW}(j)$ is the Mordell-Weil group of $j$. In particular we conclude that $r_t=1$ for all $t$, i.e. all fibers of $j$ (and therefore all fibers of $f$) are irreducible. Lastly, all fibers of $f$ are reduced by \cite[Proposition 3.1.6 (iii)]{HuyK3}.
\end{proof}

\bibliographystyle{smfalpha}
\bibliography{biblio}

\end{document}